\def\arXiv#1{arXiv:\href{http://arXiv.org/abs/#1}{#1}}
\newtheorem{theorem}{Theorem}
\newtheorem{proposition}[theorem]{Proposition}
\theoremstyle{definition}
\newtheorem{remark}[theorem]{Remark}
\newcommand{\R}{{\mathbb R}}
\newcommand{\C}{{\mathbb C}}
\newcommand{\Z}{{\mathbb Z}}
\newcommand{\Q}{{\mathbb Q}}
\newcommand{\A}{{\mathcal A}}
\newcommand{\Proj}{{\mathbb P}}
\newcommand{\sO}{{\mathcal O}}
\newcommand{\NS}{\mathrm{NS}}
\newcommand{\End}{\mathrm{End}}
\newcommand{\sC}{\mathcal C}
\newcommand{\M}{\mathcal M}
\newcommand{\sL}{\mathcal L}
\newcommand{\sF}{{\mathcal F}}
\newcommand{\sH}{\mathcal{H}}
\newcommand{\I}{\mathrm{I}}
\newcommand{\II}{\mathrm{II}}
\newcommand{\IV}{\mathrm{IV}}
\newcommand{\III}{\mathrm{III}}
\newcommand{\eps}{\epsilon}
\newcommand{\MoW}{Mordell-\mbox{\kern-.12em}Weil}
\newcommand{\kbar}{{\kern.06ex\overline{\kern-.06ex k}}}
\newcommand{\Qbar}{{\kern.1ex\overline{\kern-.1ex\Q\kern-.1ex}\kern.1ex}}
\newcommand{\SL}{\mathop{\rm SL}\nolimits}
\newcommand{\Sp}{\mathop{\rm Sp}\nolimits}
\subjclass[2010]{Primary 11F41; Secondary 14J28, 14H40} 
\keywords{K3 surfaces, moduli spaces, Hilbert modular surfaces,
  genus-$2$ curves, Jacobians, elliptic curves}
\title[Elliptic subfields of genus $2$ function fields]{Hilbert
  modular surfaces for square discriminants \\ and elliptic subfields
  of genus $2$ function fields}
\author{Abhinav Kumar}
\address{Department of Mathematics\\
Massachusetts Institute of Technology\\
Cambridge, MA 02139}
\curraddr{Department of Mathematics\\
Stony Brook University\\
Stony Brook, NY 11794}
\email{thenav@gmail.com}
\date{September 26, 2016}
\begin{document}

\begin{abstract}
We compute explicit rational models for some Hilbert modular surfaces
corresponding to square discriminants, by connecting them to moduli
spaces of elliptic K3 surfaces. Since they parametrize decomposable
principally polarized abelian surfaces, they are also moduli spaces
for genus-$2$ curves covering elliptic curves via a map of fixed
degree. We thereby extend classical work of Jacobi, Hermite, Bolza
etc., and more recent work of Kuhn, Frey, Kani, Shaska, V\"olklein,
Magaard and others, producing explicit families of reducible
Jacobians. In particular, we produce a birational model for the moduli
space of pairs $(C,E)$ of a genus $2$ curve $C$ and elliptic curve $E$
with a map of degree $n$ from $C$ to $E$, as well as a tautological
family over the base, for $2 \leq n \leq 11$. We also analyze the
resulting models from the point of view of arithmetic geometry, and
produce several interesting curves on them.
\end{abstract}

\maketitle

\section{Introduction}

In algebraic geometry and number theory, one is frequently interested
in abelian varieties with extra endomorphisms, and their moduli
spaces. The study of elliptic curves and their moduli spaces has been
extremely influential in the last century (for instance, see \cite{Ma,
  GZ, Wi}).  Higher dimensional abelian varieties have also seen many
applications; however, it has remained quite a challenge to provide
explicit computational descriptions for these or their moduli
spaces. There has been quite a lot of work in the direction of
explicit approaches to abelian surfaces in the last couple of
decades. This paper is chiefly concerned with moduli spaces of genus
$2$ curves with decomposable Jacobians. We say that an abelian variety
$A$ over a field $k$ is decomposable or reducible over $k$ if it is
isogenous over $k$ to a product of abelian varieties of smaller
dimension.

In particular, we will be interested in principally polarized abelian
surfaces $A$ which are isogenous to a product of elliptic curves. A
natural way to produce such an abelian surface is as the Jacobian of a
curve $C$ of genus $2$, which has a map of degree $n > 1$ to an
elliptic curve $E$. The morphism $\phi \colon C \to E$ induces $\phi^*
\colon E \to J(C) = A$ and $\phi_* \colon A = J(C) \to E$, with
$\phi_* \circ \phi^* = n_E$. The curve $C$ also has a map of degree
$n$ to the complement $E' = A/\phi^*(E)$ (obtained by composing the
embedding $C \to A$ with the projection map) and $A$ is isogenous to
the product $E \times E'$ with kernel isomorphic to $\Z/n\Z \oplus
\Z/n\Z$. We say that $J(C)$ is $(n,n)$-split. The (coarse) moduli
space we wish to parametrize is the space $\widetilde{\sL}_n$ of pairs
$(C,E)$ related by an {\em optimal} map of degree $n > 1$. (Here,
optimal means that the map does not factor through an unramified cover
of $E$.) It is a double cover of the moduli space $\sL_n$ of genus $2$
curves $C$ whose function field has an elliptic subfield of degree
$n$; this latter locus is a hypersurface in $\mathcal{M}_2$, the
moduli space of genus $2$ curves.

Classically, hyperelliptic curves with split Jacobians were studied in
the guise of the reduction of abelian integrals to elliptic integrals
by algebraic transformations, going as far back as Legendre and
Jacobi, who essentially gave a complete description of the moduli
space for $n = 2$. The generic family of curves $C$ for $n = 3$ was
worked out by Hermite, Goursat, Burkhardt, Brioschi and Bolza, and $n
= 4$ by Bolza. We refer the reader to \cite{Kr} for a summary of this
classical literature. Kuhn \cite{Kuh} revisited this topic, giving a
combinatorial description in terms of the branch points of the map
$\phi$ as well as the hyperelliptic involution $\iota$ on $C$, and
adapting some examples to positive characteristic. This approach has
led to fairly explicit descriptions of $\widetilde{\sL}_n$ for $2 \leq
n \leq 5$ by Shaska and others \cite{SV, Sh, SWWW, MSV}. From an
arithmetic standpoint, reducible Jacobians for $2 \leq n \leq 4$ have
been studied in \cite{CF, BD, BFT}. Frey and Kani \cite{FK, Fr} have
also studied such covers for general $n$ by looking at pairs of
elliptic curves $E$ and $E'$ such that there is an Galois equivariant
anti-isometry between their $n$-torsion subschemes. Their description
connects $\widetilde{\sL}_n$ to several interesting arithmetic
questions and applications.

In this paper, we will take a rather different strategy to compute the
moduli spaces $\widetilde{\sL}_n$, which does not focus on the map
$\phi \colon C \to E$ and its ramification locus. The key fact upon which
our method rests is that a genus $2$ curve $C$ has a map of degree $n$
to an elliptic curve $E$ if and only if the point in $\A_2$
corresponding to its Jacobian $J(C)$ lies on the Humbert surface
$\sH_{n^2}$ (see \cite{Mu}, for instance). In other words, under the
birational morphism $\mathcal{M}_2 \to \A_2$, the image of $\sL_n$ is
(birationally) identified with the Humbert surface for discriminant
$n^2$. Recall that for a discriminant $D$, the Humbert surface $\sH_D$
describes principally polarized abelian surfaces which have real
multiplication by the quadratic ring of discriminant $D$. Its double
cover is the Hilbert modular surface $Y_{-}(D)$ which parametrizes
pairs $(A, \iota)$, where $A$ is a principally polarized abelian
surface, and $\iota \colon \sO_D \to \textrm{End}(A)$ is a ring
homomorphism. Note that the reason $Y_{-}(D) \to \sH_D$ is a double
cover is because (generically) there are two choices for the action of
$\sqrt{D}$.

In \cite{EK}, a method was laid out to compute explicit models for
these Hilbert modular surfaces $Y_{-}(D)$, relying on the computation
of moduli spaces of suitable elliptic K3 surfaces, which are related
to the abelian surfaces via a Shioda--Inose structure. In that paper,
equations were given for rational models for the surfaces $Y_{-}(D)$,
for the thirty fundamental discriminants $D$ such that $1 < D <
100$. These Hilbert modular surfaces are coarse moduli spaces for
principally polarized abelian surfaces with real multiplication by the
ring of integers $\sO_D$ of $\Q(\sqrt{D})$. 

In this paper, we will extend this method to compute the Humbert
surfaces for {\em square} discriminants $D = n^2$, and our desired
moduli space $\widetilde{\sL}_n$ (which is birational to
$Y_{-}(n^2)$), for $2 \leq n \leq 11$. To do so requires several ideas
beyond those of \cite{EK}. First, we generalize the set-up and
theorems in \cite{EK} to the case of non-fundamental
discriminants. This allows us to compute $Y_{-}(n^2)$ as a moduli
space of K3 surfaces, but does not elucidate its structure as a moduli
space of $(C,E)$. In order to do so, we first describe explicitly a
normalized tautological family of genus $2$ curves over the moduli
space, using the method of certifying eigenforms for real
multiplication from \cite{KM1}. Finally, we describe a new method
involving integration of eigenforms to recover the elliptic curves
$E_1$ and $E_2$ associated to a given $C$.

Our models agree with those in the literature for $2 \leq n \leq 5$,
and we go well beyond the previous state of the art with the higher
values of $n$. In doing so, we exhibit non-rational surfaces
$\widetilde{\sL}_n$: the surfaces $\widetilde{\sL}_6$ and
$\widetilde{\sL}_7$ are elliptic K3, the next three
$\widetilde{\sL}_8, \widetilde{\sL}_9$ and $\widetilde{\sL}_{10}$ are
honestly elliptic, whereas $\widetilde{\sL}_{11}$ is a surface of
general type. The geometric classification of these surfaces was
studied in \cite{He, KS}, but explicit algebraic models were not known
before this work.  We anticipate that our algebraic models for these
moduli spaces will make it easy to carry out explicit arithmetic and
geometric investigations. For instance, one can immediately apply our
formulas to produce examples or even families of elliptic curves with
anti-isometric $n$-torsion Galois representations.

The plan of the paper is as follows: in Section \ref{setup}, we prove
the necessary generalizations of the results from \cite{EK}. Each of
the following sections treats a separate square discriminant. We
outline the parametrization of the moduli space of elliptic K3
surfaces, and a sequence of ``elliptic hops'' converting to an
elliptic fibration with $E_8$ and $E_7$ fibers. This allows us to
compute the map to $\A_2$. We can then write down the explicit double
cover giving a model of $\widetilde{\sL}_n = Y_{-}(n^2)$. We carry out
some basic geometric analysis of the moduli space, and note any
obvious arithmetic curves on it. We also provide a tautological family
of genus $2$ curves over $\widetilde{\sL}_n$, as well as the
$j$-invariants of both the associated elliptic curves\footnote{We can
  make a uniform choice of one of these, so that the family of
  $(C,E_1)$ is really the tautological family over the moduli space
  $\widetilde{\sL}_n$.} $E_1$ and $E_2$ .

The auxiliary computer files for this paper, containing various
formulas and computations omitted here for lack of space, are
available from \url{http://arxiv.org/abs/1412.2849}. To access these,
download the source file for the paper. This will produce not only the
\LaTeX{} file for this paper, but also the computer code.

For the computations involved in our work, we made frequent use of the
computer algebra systems \texttt{pari-gp}, \texttt{Magma},
\texttt{Maxima} and \texttt{Sage}. We thank Nils Bruin, Henry Cohn,
Noam Elkies, Gerard van der Geer, David Gruenewald, Curt McMullen,
Ronen Mukamel, Dan Petersen, Matthias Sch\"utt and the anonymous
referee for helpful comments. This work was supported in part by NSF
grant DMS-0952486, and by a grant from the MIT Solomon Buchsbaum
Research Fund.

\section{Hilbert modular surfaces for square discriminants} \label{setup}

In this section, we generalize the main theorems of \cite{EK} to the
case of non-fundamental discriminant $D$, which is a positive integer
congruent to $0$ or $1$ modulo $4$. In particular, this paper will
deal with the case of square discriminant: $D = n^2$, in which case
the ring in question is an order in a split quadratic algebra
(i.e.~$\Q \oplus \Q$), rather than a real quadratic field. We first
briefly go over the basic setup for this particular case, since the
typical case considered in the literature is that of non-square
discriminant. For more background on Hilbert modular surfaces see
\cite{vdG}, and for a quick introduction to the calculations of this
section, we refer the reader to \cite{Mc}. The notes \cite{Do} also
contain an excellent description of the theory of abelian varieties
with extra endomorphisms, and the connection with K3 surfaces which
was exploited in \cite{Kum1, EK} and is also used here.

\subsection{The quadratic ring of discriminant $D$}
Let $D = n^2$, and $\sO_D$ the quadratic ring of discriminant
$D$. Concretely, we can write
\[
\sO_D = \{ (x,y) \in \Z^2 : x \equiv y \bmod n \}.
\]
with componentwise addition and multiplication. It has a $\Z$-basis
$\{e_1, e_2\}$ given by $e_1 = 1 = (1,1)$ and $e_2 = (n,0)$.  The two
embeddings $\sigma_1,\sigma_2$ of $K = \sO_D \otimes \Q \cong \Q
\oplus \Q$ into $\R$ are given by taking the first or second
components and composing with $\Q \hookrightarrow \R$. There are four
square roots of $D$ in $\sO_D$, namely $\pm n = \pm (n,n)$ and $\pm
(n, -n)$. If we label $\sqrt{D} = (n,-n)$, we see that $1$ and $(D +
\sqrt{D})/2 = \big( (n^2 +n)/2, (n^2 - n)/2 \big)$ also form a
$\Z$-basis, just as in the case of non-square discriminant. The
involution which changes $\sqrt{D}$ to $-\sqrt{D}$ is that which
switches the two factors; i.e.~takes $(x,y)$ to $(y,x)$. The trace map
takes $(x,y)$ to $x + y$, and the matrix of the trace form with
respect to $e_1$ and $e_2$ is
\[
\begin{pmatrix}
2 & n \\ n & n^2
\end{pmatrix},
\]
which has discriminant $n^2 = D$. The dual lattice $\sO_D^*$ with
respect to the trace form has basis $f_1 = (1,0)$ and $f_2 =
\frac{1}{n}(1,-1)$. (In fact, for a general discriminant, $\sO_D^*$
equals $\frac{1}{\sqrt{D}} \sO_D$.) The discriminant group
$\sO_D^*/\sO_D$ is easily calculated to be $\Z/n^2\Z$ if $n$ is odd,
and $\Z/2\Z \oplus \Z/(n^2/2)\Z$ if $n$ is even.

\subsection{The setup for non-fundamental discriminants}

We now adapt Sections $3$ and $4$ of \cite{EK} to the setting of
non-fundamental discriminant. The outline of the method is as follows.

\begin{enumerate} 
\item The Hilbert modular surface $Y_{-}(D)$ is the coarse moduli
  space of $(A, \iota)$ where $A$ is a principally polarized abelian
  surface, and $\iota \colon \sO_D \to \End(A)$ is a homomorphism. The
  complex manifold $Y_{-}(D)_\C$ can be obtained by compactifying
  $\SL_2(\sO_D, \sO_D^*) \backslash \sH^2$.
\item There is a map $Y_{-}(D) \to \A_2$ which simply takes the ppas
  $A$. The image $\sH_D$ is the Humbert surface of discriminant $D$ in
  $\A_2$, and the map from the Hilbert modular surface has degree
  $2$. The key first step is to realize $\sH_D$ as a moduli space of
  elliptic K3 surfaces, polarized by a particular lattice $L_D$ of
  rank $18$ and discriminant $-D$. We explicitly parametrize this
  moduli space $\M_{L_D}$, and compute the universal K3 family over
  it.
\item We then find a different elliptic fibration on the (family of)
  K3 surfaces, with reducible fibers of type $\II^*$ and
  $\III^*$. This is accomplished by a sequence of ``elliptic hops'',
  which reflect $2$- and $3$-neighbor steps at the lattice level (see
  \cite[Section 5]{EK} or \cite[Appendix A]{Kum2}).  By the main
  result of \cite{Kum1}, from the final Weierstrass equation we may
  read out the map $\M_{L_D} \to \A_2$, which is given in terms of
  Igusa--Clebsch invariants (coordinates on $\A_2$ \cite{Ig}).
\item So far, we have an explicit birational model of $\sH_D$. To get
  to $Y_{-}(D)$, we must identify the correct double cover. We first
  pin down the geometric branch locus as a union of certain modular
  curves (by a result of Hausmann), and interpret these curves in
  terms of the K3 moduli space (the Picard group jumps in a
  predictable manner). Finally, we find the correct arithmetic twist
  by point-counting and matching characteristic polynomials of
  Frobenius on the abelian and K3 sides. 
\end{enumerate}

The first part of \cite[Section $3$]{EK} outlines the proof that
$\SL_2(\sO_D, \sO_D^*) \backslash \sH^2$ is the coarse moduli space of
principally polarized abelian surfaces with real multiplication by
$\sO_D$. The same proof goes through for arbitrary discriminant. Of
course, for $D = n^2$, usually it is more convenient to understand a
ppas with real multiplication by $\sO_D$ in terms of pairs of elliptic
curves which have degree $n$ maps from a common genus $2$ curve, as
described in the introduction. This is the approach taken in \cite{He,
  KS, Ca}.

\begin{proposition}
Let $A$ be a principally polarized abelian surface with $\End(A) \cong
\sO_D$.  Then $\NS(A) \cong \End(A)$.  The lattice $\NS(A)$ has a
basis with Gram matrix
\begin{equation}
\left(\begin{array}{cc}
\label{eq:ODGram}
2 & D \\
D & (D^2 - D)/2
\end{array}\right)
\end{equation}
of signature $(1,1)$ and discriminant $-D$.
\end{proposition}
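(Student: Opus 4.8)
The plan is to route through the classical identification of the N\'eron--Severi group of a principally polarized abelian surface with its lattice of symmetric endomorphisms, and then to translate the intersection pairing into an explicit norm form on $\sO_D$. First I would recall the canonical homomorphism $\NS(A) \to \End(A)$ sending a class $L$ to $\lambda_0^{-1}\circ\phi_L$, where $\phi_L\colon A \to A^\vee$ is the polarization isogeny attached to $L$ and $\lambda_0 = \phi_\Theta$ is the principal polarization coming from the theta divisor $\Theta$. Since $\lambda_0$ is an isomorphism, this is an isomorphism onto the subgroup $\End^{\dagger}(A)$ of endomorphisms fixed by the Rosati involution $\dagger$, with $\Theta \mapsto 1$. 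Because $K = \sO_D\otimes\Q \cong \Q\oplus\Q$ is totally real and $\dagger$ is a positive involution (the nontrivial swap of the two factors fails positivity), the restriction of $\dagger$ to $\sO_D$ is the identity; hence every element of $\End(A) \cong \sO_D$ is symmetric and $\End^{\dagger}(A) = \End(A) = \sO_D$. Combining these gives $\NS(A) \cong \sO_D$ as groups.

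The heart of the argument is to identify the intersection pairing under this isomorphism. Writing $L_f$ for the class corresponding to $f \in \sO_D$, Riemann--Roch on the abelian surface gives $\chi(L_f) = \tfrac12 L_f^2$, while the general identity $\chi(L_f)^2 = \deg\phi_{L_f} = \deg(\lambda_0 f) = \deg f$ reduces the computation to the degree of $f$. Since $A$ has real multiplication by $K$, the space $H_1(A,\Q)$ is free of rank $2$ over $K$, so multiplication by $f$ acts with $\Q$-determinant $\mathrm{N}_{K/\Q}(f)^2$, i.e.\ $\deg f = \mathrm{N}_{K/\Q}(f)^2$. Thus $L_f^2$ and $2\,\mathrm{N}_{K/\Q}(f)$ are two quadratic forms in $f$ with equal squares; as the polynomial ring over $\Q$ is a domain they agree up to an overall sign, and evaluating at $f=1$, where $\Theta^2 = 2 = 2\,\mathrm{N}_{K/\Q}(1)$, fixes the sign, so that $L_f^2 = 2\,\mathrm{N}_{K/\Q}(f)$ identically. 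Polarizing this quadratic form yields the bilinear intersection pairing $L_f \cdot L_g = \Tr_{K/\Q}(f\bar g)$, where $\bar{\,\cdot\,}$ denotes the conjugation $\sqrt D \mapsto -\sqrt D$ (i.e.\ the swap of factors).

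It then remains to evaluate this pairing on the $\Z$-basis $\{1,\ (D+\sqrt D)/2\}$ of $\sO_D$. Using $\sqrt D = (n,-n)$ and $\mathrm{N}_{K/\Q}((x,y)) = xy$, a direct substitution gives self-intersections $2$ and $(D^2-D)/2$ and cross term $D$, which is exactly the asserted Gram matrix. Its determinant is $2\cdot(D^2-D)/2 - D^2 = -D$, so the discriminant is $-D$; since this is negative for $D>0$ the form is indefinite, forcing signature $(1,1)$ (in agreement with the Hodge index theorem for $\NS$ of a surface).

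The main obstacle is pinning down the precise constant and sign in the passage from intersection numbers to the norm form: the degree computation alone only determines $L_f^2$ up to sign, and rigidifying it requires combining Riemann--Roch, the formula $\deg f = \mathrm{N}_{K/\Q}(f)^2$, and the normalization $\Theta^2 = 2$. Once the clean identity $L_f\cdot L_g = \Tr_{K/\Q}(f\bar g)$ is in hand, the Gram matrix, discriminant, and signature are all routine evaluations on the explicit basis.
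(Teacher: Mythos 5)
Your proof is correct and takes essentially the same route as the paper, which simply notes that the proof in [EK] goes through once one observes that the Rosati involution must act as the identity on $\sO_D$ because the interchange of the two factors of $\Q \oplus \Q$ is not a positive involution --- exactly the point you make. Your reconstruction of the [EK] argument (identifying $\NS(A)$ with the Rosati-fixed endomorphisms, deriving $L_f^2 = 2\,\mathrm{N}_{K/\Q}(f)$ from $\chi(L_f)^2 = \deg \phi_{L_f}$ with the sign pinned down at $f = 1$, and polarizing to get $L_f \cdot L_g = \Tr_{K/\Q}(f\bar g)$ before evaluating on the basis $\{1, (D+\sqrt{D})/2\}$) fills in precisely the details the paper delegates to that citation.
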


The proof of \cite{EK} goes through with minor changes, noting that
the Rosati involution on $K$ must be the identity, since the other
element of the Galois group (interchange of the factors in the
square-discriminant case) is not a positive involution.

The proof of the next proposition is unchanged from the original.

\begin{proposition}
  There is a primitive embedding, unique up to isomorphism, of the lattice
  $\sO_D$ into $U^3$.  Let $T_D$ be the orthogonal complement of
  $\sO_D$ in $U^3$.  Then there is a primitive embedding,
  unique up to isomorphism, of $T_D$ into the K3 lattice $\Lambda$.
\end{proposition}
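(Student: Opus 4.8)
The plan is to deduce both statements from Nikulin's theorem on primitive embeddings of even lattices into even unimodular lattices, applied in two stages. Recall that $U$ is even and unimodular of signature $(1,1)$, so $U^3$ is even unimodular of signature $(3,3)$, while the K3 lattice $\Lambda = U^3 \oplus E_8(-1)^2$ is even unimodular of signature $(3,19)$. Nikulin's criterion states that an even lattice $S$ of signature $(s_+,s_-)$ admits a primitive embedding into an even unimodular lattice of signature $(l_+,l_-)$, unique up to isometry of the ambient lattice, provided $s_+ < l_+$, $s_- < l_-$, and $\ell(A_S) + 2 \leq (l_+ + l_-) - (s_+ + s_-)$, where $A_S = S^*/S$ is the discriminant group and $\ell(A_S)$ its minimal number of generators. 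So the whole proof reduces to checking these three inequalities twice, using the discriminant-group computation of the previous subsection.

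First I would treat $\sO_D = \NS(A)$, with the Gram matrix \eqref{eq:ODGram} of signature $(1,1)$ and discriminant $-D$. Its diagonal entries are $2$ and $D(D-1)/2$; since $D = n^2$ is congruent to $0$ or $1$ modulo $4$, one checks $D(D-1) \equiv 0 \bmod 4$ in both parities of $n$, so the lattice is even. From the first subsection, $A_{\sO_D} \cong \Z/n^2\Z$ when $n$ is odd and $\Z/2\Z \oplus \Z/(n^2/2)\Z$ when $n$ is even, so $\ell(A_{\sO_D}) \leq 2$. Applying Nikulin with $(s_+,s_-) = (1,1)$ and $(l_+,l_-) = (3,3)$: the signature conditions $1 < 3$ hold, and $\ell(A_{\sO_D}) + 2 \leq 4 = 6 - 2$ holds since $\ell(A_{\sO_D}) \leq 2$. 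This gives existence and uniqueness of the primitive embedding $\sO_D \hookrightarrow U^3$, which in turn pins down $T_D$, the orthogonal complement, up to isometry.

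For the second claim, since $U^3$ is unimodular the orthogonal complement carries the negated discriminant form: $A_{T_D} \cong A_{\sO_D}$ with $q_{T_D} = -q_{\sO_D}$, so $\ell(A_{T_D}) = \ell(A_{\sO_D}) \leq 2$, and $T_D$ has rank $4$ and signature $(2,2)$. I would then apply Nikulin again with $(s_+,s_-) = (2,2)$ and $(l_+,l_-) = (3,19)$: the signature conditions $2 < 3$ and $2 < 19$ hold, and $\ell(A_{T_D}) + 2 \leq 18 = 22 - 4$ is satisfied comfortably. This yields the existence and uniqueness, up to isomorphism, of the primitive embedding $T_D \hookrightarrow \Lambda$, completing the proof.

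The only delicate point is the length condition for the first embedding, which is tight: when $n$ is even one has $\ell(A_{\sO_D}) = 2$ exactly equal to $(l_+ + l_-) - (s_+ + s_-) - 2 = 4 - 2$, so the inequality holds only with equality. Thus the argument genuinely relies on the explicit determination of the discriminant group $\sO_D^*/\sO_D$ from the first subsection, rather than a crude rank bound, and on verifying evenness of \eqref{eq:ODGram} uniformly in $n$. The second embedding, by contrast, has enormous slack, so no obstruction arises there.
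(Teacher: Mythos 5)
Your proof is correct and takes essentially the same route as the paper: the paper declares the proof ``unchanged from the original'' in \cite{EK}, and that original proof is precisely this two-stage application of Nikulin's Theorem 1.14.4, checking signatures and the length condition via the discriminant group $\sO_D^*/\sO_D \cong \Z/n^2\Z$ ($n$ odd) or $\Z/2\Z \oplus \Z/(n^2/2)\Z$ ($n$ even) computed in the first subsection. Your remark that the length inequality is tight for even $n$, so that the explicit discriminant-group computation (not just a rank bound) is genuinely needed, is exactly the point implicit in the paper's setup.
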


The main theorem of that section identifies the Humbert surface with a
moduli space of K3 surfaces lattice-polarized by $L_D := E_8(-1)^2
\oplus \sO_D$. Let $L$ be the lattice $U \oplus E_8(-1) \oplus
E_7(-1)$.

\begin{theorem} \label{humbert}
  Let $\sF_{L_D}$ be the moduli space of K3 surfaces that are lattice
  polarized by $L_D$.  Then the isomorphism $\phi \colon \sF_L \rightarrow
  \A_2$ of \cite{Kum1} induces a birational surjective morphism
  $\sF_{L_D} \rightarrow \sH_D$.
\end{theorem}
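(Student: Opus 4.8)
The plan is to leverage the earlier theorem for the fundamental-style lattice $L = U \oplus E_8(-1) \oplus E_7(-1)$, whose moduli space $\sF_L$ is already identified with $\A_2$ via the isomorphism $\phi$ of \cite{Kum1}, and to produce $\sF_{L_D} \to \sH_D$ as a natural restriction of that correspondence. The key observation is that there should be a primitive lattice embedding $L_D = E_8(-1)^2 \oplus \sO_D \hookrightarrow L$ (or more precisely, a compatible comparison of their polarization data). Indeed, $L_D$ has signature $(1,17)$ and $L$ has signature $(1,17)$ as well, so they have the same rank; the distinction is that $L_D$ carries the extra lattice data $\sO_D$ recording real multiplication, whereas $L$ carries only the rank-one polarization plus the $E_8 \oplus E_7$ root system. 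So I would set up the inclusion $\sO_D \supset \langle 2 \rangle$ (the polarization class inside the real-multiplication lattice with Gram matrix \eqref{eq:ODGram}) and show that polarizing by $L_D$ is the same as polarizing by $L$ together with a choice of an extra endomorphism class lifting to real multiplication on the associated abelian surface.

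**First** I would recall the Shioda–Inose picture: a K3 surface $X$ lattice-polarized by $L$ has transcendental lattice $T_L$ of rank $5$, and by the construction of \cite{Kum1} corresponds to a principally polarized abelian surface $A$ with $T_X \cong T_A(-1) \oplus \langle \,\cdot\, \rangle$ or the appropriate Shioda–Inose shift, so that $\phi$ sends $X \mapsto A = J(C)$. Refining the polarization from $L$ to the larger lattice $L_D \supset L$ forces $\NS(X)$ to contain the extra class coming from $\sO_D$, which under the Shioda–Inose correspondence is exactly the condition that $\End(A)$ contain $\sO_D$, i.e.\ that $A$ acquire real multiplication by the order of discriminant $D$. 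By Proposition \ref{eq:ODGram} (the identification $\NS(A) \cong \End(A) \cong \sO_D$ with the stated Gram matrix) this is precisely the condition defining the Humbert surface $\sH_D$. Thus the restriction of $\phi$ to the sublocus of $\sF_L$ where the Néron–Severi lattice contains $L_D$ lands in $\sH_D$, and conversely every point of $\sH_D$ lifts.

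**Next** I would check surjectivity and birationality explicitly: since $\phi \colon \sF_L \to \A_2$ is already an isomorphism, the induced map on the locus $\sF_{L_D} \subset \sF_L$ is injective where defined, and its image is the set of $A$ with $\sO_D \hookrightarrow \End(A)$, which is the Humbert surface $\sH_D$ by definition. Surjectivity onto $\sH_D$ follows because the primitive embedding $\sO_D \hookrightarrow U^3$ (Proposition on $T_D$) guarantees that every such $A$ arises from some $L_D$-polarized K3 via the Shioda–Inose structure, so there are no missing points on a dense open set. The map is a morphism rather than merely a rational map because the $L_D$-polarization varies algebraically in families, so I would phrase this in terms of the universal family over $\M_{L_D}$ and the fact that the Igusa–Clebsch invariants (coordinates on $\A_2$) are regular functions of the Weierstrass data.

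**The main obstacle** I expect is controlling what happens along the degeneracy locus — the places where the lattice polarization jumps further, where the K3 becomes singular, or where the genus-$2$ curve degenerates (the product or reducible locus in $\A_2$). Over these loci the morphism may fail to be defined or the double-cover subtleties (the two choices of $\sqrt{D}$, which later give $Y_{-}(D) \to \sH_D$) can interfere, so I must be careful to claim only a \emph{birational} surjective morphism and to restrict to the open dense locus where the transcendental lattice has the expected rank and the Shioda–Inose correspondence is a clean bijection. Since the analogous argument for fundamental $D$ already appears in \cite{EK}, the real work is verifying that the lattice-theoretic inputs (primitivity and uniqueness of the embeddings, the discriminant-form computation $\disc = -D$) survive for non-fundamental $D = n^2$, where $\sO_D$ is an order rather than a maximal order; I would confirm this using the Gram matrix \eqref{eq:ODGram} and the discriminant-group computation $\sO_D^*/\sO_D$ recorded above.
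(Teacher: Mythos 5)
Your overall architecture is the right one, and it matches the paper's (which simply observes that the proof of the corresponding theorem in \cite{EK} carries over once the lattice-theoretic inputs are re-verified for non-fundamental $D$): restrict $\phi$ to the locus with extra N\'eron--Severi classes, use the Shioda--Inose structure together with the identification $\NS(A) \cong \End(A) \cong \sO_D$ to see that the image is $\sH_D$, and get surjectivity from the primitive embeddings $\sO_D \hookrightarrow U^3$ and $T_D \hookrightarrow \Lambda$. Your closing remark that the real work is checking that primitivity, uniqueness, and the discriminant computation survive when $\sO_D$ is a non-maximal order is exactly the point of the paper's treatment.

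However, your stated ``key observation'' is wrong, and in a way that breaks the first step of the argument. You assert a primitive embedding $L_D \hookrightarrow L$ and justify it by claiming both lattices have signature $(1,17)$. In fact $L = U \oplus E_8(-1) \oplus E_7(-1)$ has rank $17$ and signature $(1,16)$, while $L_D \cong E_8(-1)^2 \oplus \sO_D$ has rank $18$ and signature $(1,17)$, so no embedding $L_D \hookrightarrow L$ can exist. The embedding the theorem rests on goes the other way, $L \hookrightarrow L_D$: this is what makes every $L_D$-polarized K3 surface also $L$-polarized (equivalently, guarantees the elliptic fibration with $\II^*$ and $\III^*$ fibers), so that the map $\sF_{L_D} \to \sF_L \cong \A_2$ is defined at all. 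Your second paragraph quietly switches to ``the larger lattice $L_D \supset L$,'' contradicting the first, but you never establish that $L$ actually embeds primitively and essentially uniquely in $L_D$ --- and this is not obvious: $U$ is not a visible direct summand of $E_8(-1)^2 \oplus \sO_D$, and the existence and uniqueness of the embedding is precisely the nontrivial ``key proposition'' of \cite{EK} (requiring a discriminant-form argument in the style of Nikulin) that the paper checks goes through unchanged for square $D$. As written, your proposal assumes the one statement the whole theorem depends on, after first asserting its impossible mirror image.
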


Its proof relies on a key proposition involving the lattice embedding
$L \hookrightarrow L_D$, where $L = U \oplus E_8(-1) \oplus E_7(-1)$
and $L_D \cong E_8(-1)^2 \oplus \sO_D$ is the orthogonal complement of
$T_D$ in $\Lambda$, uniquely described by the proposition above. The
proof of the key proposition and the main theorem go through without
any changes.

We must next show that the branch locus of the map $Y_{-}(D) \to
\sH_D$ corresponds to a divisor $Z$ in the moduli space of $\M_{L_D}$
of K3 surfaces lattice polarized by $L_D$, such that the rank of the
K3 surface corresponding to a generic point on $Z$ jumps to $19$, and
the discriminant of the Picard group is $D/2$ or $2D$. In \cite{EK},
the argument used the fact that $D$ was fundamental; we give a more
general proof here, valid for arbitrary discriminant.

First, we observe as in \cite{EK} that the branch locus consists of a
union of specific modular curves. Let us consider the birational
models $\Gamma \backslash \sH^2$ for $Y_{-}(D)$ (where $\Gamma =
\SL_2(\sO_D, \sO_D^*)$) and $\Sp_4(\Z) \backslash \mathcal{S}_2$ for
$\A_2$, with $\mathcal{S}_2$ being the Siegel upper half space for
genus $2$. The two-to-one map $\Gamma \backslash \sH^2 \to \Sp_4(\Z)
\backslash \mathcal{S}_2$ factors through $(\Gamma \cup \Gamma \sigma
) \backslash \sH^2$, where $\sigma$ is the interchange of coordinates
on $\sH^2$. The branch locus is the fixed point set of $\sigma$ on
$\Gamma \backslash \sH^2$.

\begin{proposition}[Hausmann \cite{Hau}]
The one-dimensional part of the fixed point set of $\sigma$ on $\Gamma
\backslash \sH^2$ is the union of the modular curves $F_w$, where $w$
ranges over $\{1,D\}$ if $D$ is odd, and over $\{1, 4, D/4, D\}$ if
$D$ is even.
\end{proposition}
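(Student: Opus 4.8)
The plan is to determine the fixed point set of the coordinate-swap involution $\sigma$ on the Hilbert modular surface $\Gamma \backslash \sH^2$ by reducing it to a lattice-theoretic count, following Hausmann's original approach. A point $(\tau_1, \tau_2) \in \sH^2$ maps to a fixed point of $\sigma$ on $\Gamma \backslash \sH^2$ precisely when there exists $\gamma \in \Gamma = \SL_2(\sO_D, \sO_D^*)$ with $\gamma \cdot (\tau_2, \tau_1) = (\tau_1, \tau_2)$, i.e.~when the abelian surface $A_{(\tau_1,\tau_2)}$ admits an extra endomorphism beyond the real multiplication by $\sO_D$, compatible with swapping the two factors of $K \cong \Q \oplus \Q$. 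The key point is that such an anti-diagonal symmetry forces $\End(A)$ to strictly contain $\sO_D$, so the one-dimensional fixed locus is detected by a \emph{quadratic} condition. Concretely, I would parametrize the fixed locus by elements $\mu$ in the (indefinite) quadratic module associated to the skew-Hermitian or symmetric forms over $\sO_D$ that intertwine $\sigma$, and then classify the $\Gamma$-orbits of such $\mu$.

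First I would set up the standard dictionary: the fixed points of $\sigma$ correspond to $\SL_2(\sO_D,\sO_D^*)$-orbits of pairs $(\tau, \gamma)$ where $\gamma$ is an element of the group normalizing $\sigma$ that acts as an involution with the required fixed point. After identifying these with solutions of an integral quadratic equation, the fixed curves $F_w$ are indexed by the value $w$ of a norm-type invariant attached to the intertwining element. Hausmann's result states that for a swap-symmetric abelian surface, the ``degree'' or relative discriminant invariant $w$ must divide $D$ and additionally satisfy a congruence forcing $w \in \{1, D\}$ when $D$ is odd, and $w \in \{1, 4, D/4, D\}$ when $D$ is even. I would derive the parity split directly from the structure of the discriminant group $\sO_D^*/\sO_D$ computed in Section~\ref{setup}: it is $\Z/n^2\Z$ for $n$ odd (a cyclic $2$-group-free situation giving only the two endpoints $w=1, D$), while for $n$ even it has the extra $\Z/2\Z$ factor, which produces the intermediate values $w = 4$ and $w = D/4$.

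The main obstacle is the precise bookkeeping of which $w$ actually occur as \emph{one-dimensional} components rather than isolated (zero-dimensional) fixed points or empty loci. The values $w \mid D$ that give genuine curves $F_w$ are exactly those for which the associated quadratic form over $\sO_D^*/\sO_D$ represents the relevant class; the congruence conditions modulo $4$ (encoded in $D \equiv 0,1 \bmod 4$) and the $2$-adic structure of the discriminant form control whether the solution set has the expected dimension. I would carry out this analysis by computing, for each candidate divisor $w$ of $D$, the genus of the binary quadratic form governing the corresponding rank-jump locus and checking solvability at each prime, with the prime $2$ being the delicate case that distinguishes the odd and even $n$ outcomes. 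Since the statement is attributed to Hausmann \cite{Hau}, I expect that rather than reproving the full classification one would cite that reference for the enumeration and supply only the translation into the present notation, verifying that the discriminant-group computation above is consistent with Hausmann's list $\{1,D\}$ and $\{1,4,D/4,D\}$.
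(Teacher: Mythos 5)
The paper gives no proof of this proposition at all: it is stated as Hausmann's theorem and justified solely by the citation to \cite{Hau}, and your proposal ultimately takes the same route, deferring to Hausmann for the enumeration and supplying only a translation into the present notation, which is exactly what the paper does. One caution, in case you intended the sketch to stand on its own: reading the list $\{1,4,D/4,D\}$ off the structure of the discriminant group $\sO_D^*/\sO_D$ would not by itself constitute a proof --- it neither rules out the other divisors $w \mid D$ nor distinguishes one-dimensional components from isolated fixed points (that requires Hausmann's full classification of $\Gamma$-orbits of the relevant quadratic equations, and note also that \cite{Hau} is written for real quadratic fields, so its application to square $D$ is an implicit extension the paper makes without comment) --- but since you explicitly fall back on the citation, your treatment matches the paper's.
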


Next, we analyze what happens to the K3 surfaces along these modular
curves.

\begin{proposition}
Let $N$ be a natural number, and let $C$ be any component of the
inverse image of a modular curve $F_N$, under the birational
surjective morphism $\phi_D \colon \sF_{L_D} \rightarrow \sH_D$. The K3
surface corresponding to a generic point of $C$ has N\'eron--Severi
group of rank $19$ and discriminant $2N$.
\end{proposition}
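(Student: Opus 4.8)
The plan is to pass from the K3 surface to the abelian surface through the Shioda--Inose structure, where both invariants acquire a direct modular meaning, and then to recover them from the elementary theory of orthogonal complements in unimodular lattices. Over a generic point of $\sH_D$, a K3 surface $X$ in the fiber of $\phi_D$ is related to its ppas $A$ by such a structure, so the transcendental lattices are Hodge-isometric, $T_X \cong T_A$. Writing $\NS(X) = T_X^{\perp}$ inside the unimodular K3 lattice $\Lambda$ (rank $22$) and $\NS(A) = T_A^{\perp}$ inside $H^2(A,\Z) \cong U^3$ (unimodular, rank $6$), I get $\operatorname{rank}\NS(X) = 22 - \operatorname{rank} T_A = 16 + \operatorname{rank}\NS(A)$. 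Moreover, since the orthogonal complement of a primitive sublattice of a unimodular lattice has discriminant group isomorphic to that of the sublattice (with the opposite quadratic form), $|\disc\NS(X)| = |\disc T_X| = |\disc T_A| = |\disc\NS(A)|$; the sign is then fixed by the signature $(1,18)$ of $\NS(X)$. Thus everything reduces to showing that the ppas $A$ attached to a generic point of $F_N$ has $\NS(A)$ of rank $3$ and $|\disc\NS(A)| = 2N$.

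For the rank, recall that the branch locus is the fixed locus of $\sigma$, so a generic point of $F_N$ carries an automorphism $\tau$ of $A$ with $\tau\,\iota(\sqrt{D}) = -\iota(\sqrt{D})\,\tau$. This $\tau$ lies outside the commutative ring $\iota(\sO_D)$, so $\End(A)$ strictly enlarges $\sO_D$ and $\NS(A)$ jumps from its generic rank $2$ on $\sH_D$ (where $\NS(A)\cong\sO_D$, by the first proposition of this section) to rank $3$; equivalently, a single transcendental cycle becomes algebraic and $\operatorname{rank}T_A$ drops from $4$ to $3$. By the displayed formula this forces $\operatorname{rank}\NS(X) = 19$, as claimed.

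It remains to pin down $|\disc\NS(A)| = 2N$. Here I would invoke the defining norm condition for the Hirzebruch--Zagier divisor $F_N$ on $\Gamma\backslash\sH^2$, in the normalization of Hausmann \cite{Hau} and van der Geer \cite{vdG}: the newly algebraic class $w$ completes $\iota(\sO_D)$ to a rank-$3$ sublattice $M\subseteq\NS(A)$ of discriminant $2N$, and a very general point of $F_N$ lies on no further Humbert locus, so that $\NS(A)=M$. As a check in the split case, on a product $E_1\times E_2$ the classes $E_1\times 0$, $0\times E_2$ and the graph $\Gamma_\psi$ of an isogeny $\psi$ of degree $d$ have Gram matrix $\left(\begin{smallmatrix} 0 & 1 & d \\ 1 & 0 & 1 \\ d & 1 & 0 \end{smallmatrix}\right)$, of determinant $2d$, exhibiting the factor of $2$ and matching $2N$ with $N=d$.

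The main obstacle is exactly this discriminant bookkeeping: reconciling the Hirzebruch--Zagier normalization of $F_N$ with the lattice discriminant, i.e.\ verifying that the index of $\iota(\sO_D)\oplus\Z w$ in its saturation $M$ conspires with $w^2$ and $\disc\sO_D = -D$ to leave exactly $2N$, independently of $D$. One must also confirm that the answer is the same on \emph{every} component $C$ of the inverse image of $F_N$ and that the generic Picard rank is exactly $19$, with no unexpected additional jump; both follow because $\phi_D$ is birational and a very general point of each component of $F_N$ meets no other modular curve.
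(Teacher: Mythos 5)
Your overall skeleton matches the paper's final step: the Shioda--Inose transfer (transcendental lattices Hodge-isometric, $\NS$ recovered as an orthogonal complement in a unimodular lattice, hence rank $19$ and $|\disc\NS(X)| = |\disc\NS(A)|$) is exactly how the paper passes from the abelian surface to the K3. But the heart of the proposition --- that $\NS(A)$ at a generic point of \emph{any} component of $F_N$ has rank $3$ and discriminant exactly $2N$ --- is not established in your write-up: you explicitly defer the ``discriminant bookkeeping'' (the index of $\iota(\sO_D)\oplus\Z w$ in its saturation, and its interaction with $w^2$ and $\disc \sO_D = -D$) and call it the main obstacle, which is precisely the content to be proved. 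Your product-locus check with the Gram matrix of determinant $2d$ is a correct consistency test, but it only probes split points, not the generic point of $F_N$ inside $\sH_D$, and it says nothing about saturation there. The paper closes this gap with two citations: by McMullen \cite{Mc}, the generic point of any component of $F_N$ corresponds to an abelian surface whose endomorphism ring is a quaternionic order $R$ of discriminant $N^2$, and by Harder \cite{Har} one has $R = \sC^+\big(\tfrac{1}{2}\NS(A)\big)$; the standard dictionary between ternary quadratic lattices and their even Clifford orders then forces $\NS(A)$ to have rank $3$ and discriminant $2N$, uniformly in $D$ and in the choice of component. Some input of this kind is unavoidable: without it your argument yields only ``discriminant $2N$ up to an unexamined index.''

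There is also a genuine error in your rank argument: its premise is false for general $N$. The fixed locus of $\sigma$ is \emph{not} the union of all the $F_N$; by Hausmann's result quoted in the paper it consists only of $F_1$ and $F_D$ (respectively $F_1, F_4, F_{D/4}, F_D$ for even $D$). For any other $N$, a generic point of $F_N$ carries no automorphism $\tau$ anti-commuting with $\iota(\sqrt{D})$; what it carries is an extra \emph{endomorphism} enlarging $\iota(\sO_D)$ to a quaternionic order --- which is again exactly what the citation to \cite{Mc} supplies. So while your conclusion (Picard rank jumps to $19$) is correct, your justification as written applies only to the branch-locus curves, whereas the proposition is stated, and later used, for arbitrary $N$. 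Repairing both points essentially reproduces the paper's proof: quaternionic order of discriminant $N^2$ at the generic point, the Clifford-algebra relation pinning $\NS(A)$ to rank $3$ and discriminant $2N$, and then the Shioda--Inose transfer you already have.
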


\begin{proof}
By \cite{Mc}, the generic point on any component of a modular curve
$F_N$ corresponds to an abelian surface $A$ whose ring of
endomorphisms is a quaternionic order $R$ of discriminant $N^2$. By
\cite{Har}, the N\'eron--Severi group of $A$ and the endomorphism ring
are connected by the relation $R = \sC^+\big(\tfrac{1}{2} \NS(A)\big)$,
the even part of the associated Clifford ring. From this, it is easy
to compute that $\NS(A)$ must have rank $3$ and discriminant $2N$. By
the Shioda--Inose structure, the Picard group of the associated K3
surface must have rank $19$ and discriminant $2N$.
\end{proof}

Therefore, the part of the branch locus corresponding to the curve
$F_D$ (and $F_{D/4}$ if $D$ is even) corresponds to a sublocus of K3
surfaces having Picard group of discriminant $2D$ (respectively
$D/2$). Section $4$ of \cite{EK} explains how to correctly identify
the branch locus and the arithmetic twist; we will not restate the
method or proofs here.

The discussion for the curves $F_1$ (and $F_4$ if $D$ is even) was
omitted from \cite{EK}; we address it here.

\begin{proposition}
Let $X$ be an elliptic K3 surface with a reducible fiber of type
$\II^*$ and another of type $\III^*$. Furthermore, assume that $X$ has
a section of height $D/2$, for some natural number $D$ congruent to
$0$ or $1$ modulo $4$. Then $\NS(X)$ cannot be a lattice of rank $19$
and discriminant $2$ or $8$.
\end{proposition}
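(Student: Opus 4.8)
The plan is to read the structure of $\NS(X)$ off the elliptic fibration via the Shioda--Tate formalism and then bound its discriminant from below. First I would record the trivial lattice: the zero section $O$ together with a general fiber $F$ span a copy of $U$, the $\II^*$ fiber contributes $E_8(-1)$, and the $\III^*$ fiber contributes $E_7(-1)$, so that $\mathrm{Triv}(X) \cong U \oplus E_8(-1) \oplus E_7(-1)$, a lattice of rank $17$ with $|\disc| = 2$. Taking these to be the only reducible fibers, the hypothesis $\mathrm{rank}\,\NS(X) = 19$ forces the \MoW{} group to have rank exactly $2$.

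Next I would exploit the height pairing to constrain the sections. Shioda's formula gives $h(P) = 4 + 2(P\cdot O) - \sum_v \mathrm{contr}_v(P)$, where the local term vanishes at the $\II^*$ fiber (whose component group is trivial) and lies in $\{0, 3/2\}$ at the $\III^*$ fiber (component group $\Z/2$). Since $(P\cdot O) \geq 0$, every nonzero section has height at least $4 - 3/2 = 5/2$. In particular a nonzero torsion section is impossible, as it would have height $0$; hence $\MW(X)$ is torsion-free, and the height pairing realizes it as a positive-definite lattice of rank $2$ whose minimum is at least $5/2$.

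Then I would apply reduction theory in rank $2$: for a reduced basis $\{u,v\}$ with $h(u) \leq h(v)$ and $2|\langle u,v\rangle| \leq h(u)$, the regulator satisfies $R(\MW) = h(u)h(v) - \langle u,v\rangle^2 \geq \tfrac34 h(u)^2 \geq \tfrac34 (5/2)^2 = 75/16$. Combining this with the discriminant formula $|\disc \NS(X)| = |\disc \mathrm{Triv}(X)| \cdot R(\MW)/|\MW_{\mathrm{tors}}|^2 = 2\,R(\MW)$ yields $|\disc \NS(X)| \geq 75/8 > 8$. Thus $\NS(X)$ can have neither discriminant $2$ nor $8$; the assumed section of height $D/2$ serves only to witness that $\MW(X)$ is nontrivial.

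The delicate point, and the step I expect to require the most care, is the control of the reducible fibers underlying the height bound: everything rests on the minimum of $\MW(X)$ being at least $5/2$, which is precisely what makes $2\,R(\MW)$ clear $8$. If an additional reducible fiber were present --- for instance an $A_1$ arising from an $\I_2$ or $\III$ fiber --- then $\mathrm{Triv}(X)$ would become $U \oplus E_8(-1) \oplus E_7(-1) \oplus A_1(-1)$ of discriminant $4$, the \MoW{} rank would drop to $1$, and a generator could have height as low as $4 - 3/2 - 1/2 = 2$, giving $|\disc \NS(X)| = 4 \cdot 2 = 8$ after all. So the crux is to confirm that the $E_8$--$E_7$ fibration produced by the elliptic hops carries no reducible fibers beyond the prescribed $\II^*$ and $\III^*$, equivalently that $\mathrm{Triv}(X)$ is exactly $U \oplus E_8(-1) \oplus E_7(-1)$, which fixes the torsion-free rank-$2$ \MoW{} lattice and its minimum exactly where the inequality demands.
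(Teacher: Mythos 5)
Your treatment of the main case is exactly the paper's argument in lightly different clothing: the paper also restricts first to the case of no further reducible fibers, rules out torsion by the same height-formula observation, and then invokes the Hermite constant $\gamma_2 = 2/\sqrt{3}$ in rank $2$ --- of which your reduced-basis inequality $R(\MW) \geq \tfrac34 h(u)^2$ is a restatement --- to get a contradiction; your handling of discriminants $2$ and $8$ in a single inequality $|\disc \NS(X)| \geq 2\cdot\tfrac{75}{16} > 8$ is a cosmetic streamlining of the paper's two separate checks. One point in your favor: your minimal height $5/2 = 4 - 3/2$ is the correct figure (the non-identity component of a $\III^*$ fiber contributes $3/2$), whereas the paper's parenthetical ``$4 - 4/3 = 8/3$'' is a slip ($4/3$ is the $E_6$ contribution); this is harmless for both proofs since $5/2 > 4/\sqrt{3} \approx 2.309$ still. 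You also correctly echo the paper's remark that the height-$D/2$ hypothesis goes unused in this case.

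The gap is in your final paragraph. The proposition contains no hypothesis excluding additional reducible fibers, so ``confirm that the $E_8$--$E_7$ fibration produced by the elliptic hops carries no reducible fibers beyond $\II^*$ and $\III^*$'' is not a step you are entitled to: it imports context from the paper's construction that is absent from the statement, and it would in any case defeat the purpose of the proposition, which the paper applies precisely to a priori unknown fibrations on special members of the family --- points where extra reducible fibers are exactly what one expects. What you have actually proved is a weaker statement with ``no other reducible fibers'' added as a hypothesis. The paper disposes of the remaining case with a one-line ``additional analysis, but a similar proof applies,'' and your own $A_1$ computation shows this cannot mean the same height-minimum argument: with an extra $A_1$ the trivial lattice has discriminant $4$, a \MoW{} generator of height $2 = 4 - 3/2 - 1/2$ is numerically consistent, and $4\cdot 2 = 8$ is exactly one of the excluded discriminants. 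So whatever closes that case must use more than the minimum of $\MW(X)$ --- the natural resource being the so-far-unused section of height $D/2$, which for instance rules out the configuration with two extra $A_1$'s (there the trivial lattice alone has rank $19$ and discriminant $8$, but $\MW$ is then trivial and no section of positive height exists), and which constrains the rank-one scenario since all heights there lie in $2m^2\Z_{>0}$. Your parting claim that the height-$D/2$ section ``serves only to witness that $\MW(X)$ is nontrivial'' is therefore safe only in the no-extra-fiber case; in the general case it is the hypothesis that has to carry weight, and neither your proposal nor the paper's terse remark spells out how.
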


\begin{proof}
First, assume that $X$ has no other reducible fibers. Then, in
particular, there cannot be a torsion section: such a section would
have height $0$, which is impossible to obtain with only two reducible
fibers of type $E_8$ and $E_7$. Therefore, if $\NS(X)$ had
discriminant $2$, the Mordell--Weil lattice would be positive definite
of rank $2$ and discriminant $1$. The Hermite constant in dimension
$2$ is $2/\sqrt{3}$, so this would imply that there is a section of
height at most $2/\sqrt{3} \approx 1.155$, which is impossible with
the fiber configuration (the best we can do is $4 - 4/3 = 8/3 >
2.66$). Similarly, if $\NS(X)$ had discriminant $8$, we would need a
section of height $4/\sqrt{3} \approx 2.309$, which is still
forbidden.  Note that we did not use the assumption of a section of
height $D/2$. For the case when $X$ could have additional reducible
fibers, we have to do some additional analysis, but a similar proof
applies.
\end{proof}

Going back to our situation, let ${\mathcal X}$ be the family of K3
surfaces lattice polarized by $L_D$, over the base $\M_{L_D}$ (the
coarse moduli space). In the third step of the method described above,
we find a different elliptic fibration on the K3 surface ${\mathcal
  X}_\nu$ (uniformly over $\nu$ on the base), with $\II^*$ and
$\III^*$ fibers. However, according to the above proposition, for
$\nu$ on one of the curves $C$ lying over $F_1$ or $F_4$, we cannot
have such an elliptic fibration on $X_\nu$. Therefore, the Weierstrass
equation for the resulting $\II^*, \III^*$ fibration must be undefined
(or not minimal, or one of these fibers must degenerate further) for
$\nu$ on such a curve $C$. This is reflected in the denominators of
the expressions for the Weierstrass coefficients and of the
Igusa--Clebsch invariants, as well as the (numerator of the) invariant
$I_{10}$. Therefore, by adding their irreducible factors to that of
the list for discriminant $2D$ (and $D/2$ when $D$ is even), we can
then continue with Step 4 of \cite{EK}.

The remaining assertions and proofs of Section $4$ of \cite{EK} go
through without any change. Therefore, we may now proceed with the
computation of the Hilbert modular surfaces $Y_{-}(n^2)$ for small
values of $n$.  Here, we describe these surfaces for $n$ up to
$11$. The cases $2 \leq n \leq 5$ have been treated in the previous
literature on the subject, although by using completely different
methods. The cases $6 \leq n \leq 11$ show that one can quickly go
beyond the current state of the art using our new techniques. For each
$n$ treated in this paper, we also describe a tautological family of
genus $2$ curves over the moduli space, whose Jacobians are
$(n,n)$-isogenous to a pair of elliptic curves, and we give the
$j$-invariants of these elliptic curves.\footnote{More specifically,
  when the moduli space $Y_{-}(n^2)$ is rational, we give the
  $j$-invariants in terms of the parameters on the moduli space. For
  the non-rational moduli spaces, we give $j_1 + j_2$ and $j_1 j_2$ in
  terms of the parameters on the (rational) Humbert surface
  $\sH_{n^2}$, and verify that $j_1 - j_2$ generates the function
  field of its double cover $Y_{-}(n^2)$.} To accomplish this, we use
the Eigenform Location Algorithm of \cite{KM1} to produce a
tautological family of {\em normalized} genus $2$ curves (i.e.~for
which the eigenforms for real multiplication are given by $dx/y$ and
$x\,dx/y$), along with a method which involves integrating an
eigenform. In principle, we can even give the two maps of degree $n$
from the genus $2$ curve to the elliptic curves; we indicate how this
is done in the case $n = 3$. There is no serious obstruction to
computing these modular surfaces for $n \geq 12$, though of course the
computations will get more challenging for large $n$.

We list the geometric type of the surfaces treated here in the
following table. For comparison with the literature, we note that
Hermann \cite{He} described the geometric classification of modular
surfaces $Y_{n, \eps}$ of discriminant $n^2$, which is the quotient of
$\sH^2$ by an appropriate subgroup of $\SL_2(\Z/n\Z)^2$. A different
proof was given by Kani and Schanz \cite{KS}, who also corrected a
typo in \cite{He}, and described the connection with the moduli space
of pairs of elliptic curves $E_1, E_2$ with an isomorphism on the
$n$-torsion $E_1[n] \cong E_2[n]$. The index $\eps \in
(\Z/n\Z)^\times$ is the factor which multiplies the determinant of the
Weil pairing under this isomorphism. See also \cite{Ca} for a nice
overview and connections to modular forms. For our particular
situation, $\eps = -1$ as the isomorphism is an anti-isometry (see
\cite{FK}), and we can read off the corresponding results from the
above papers (note that the isomorphism type only depends on the
square class of $\eps$ in $(\Z/n\Z)^\times$).

\begin{center}
\begin{tabular}{c|c}
$n$ & Geometric type \\
\hline
2 & Rational \\
3 & Rational \\
4 & Rational \\
5 & Rational \\
6 & Elliptic K3 
\end{tabular} 
\qquad \qquad 
\begin{tabular}{c|c}
$n$ & Geometric type \\
\hline
7 & Elliptic  K3 \\
8 & Honestly elliptic \\
9 & Honestly elliptic \\
10 & Honestly elliptic \\
11 & General type 
\end{tabular}
\end{center}

\section{Discriminant $4$}

\subsection{Parametrization}

We would like a family of K3 surfaces lattice polarized by $L_4$. It
is easy to see that $L_4 \cong U \oplus E_8 \oplus E_7 \oplus A_1$. We
now reverse-engineer Tate's algorithm to find elliptic K3 surfaces
with reducible fibers of types $E_8$, $E_7$ and $A_1$ at $t = \infty$,
$0$ and $1$ respectively.

A general elliptic K3 surface with section has the form 
\[
y^2 = x^3 + a(t) x^2 + b(t) x + c(t), 
\]
with $a, b, c$ being polynomials in $t$ of degree $4, 8, 12$
respectively. In order to have an $E_8$ fiber at $t = \infty$, we may
assume (after shifting $x$ suitably) that $a$ has degree at most
$2$. Similarly, the $E_7$ fiber at $t = 0$ allows us to assume that $a
= kt^2$, for some constant $k$. These normalizations involve shifting
$x$ by a linear combination of $1,t,t^3$ and $t^4$. Finally, since
there is an $A_1$ fiber at $t = 1$, we may shift $x$ by a suitable
multiple of $t^2$, to make $b(t)$ and $c(t)$ divisible by $t-1$ and
$(t-1)^2$ respectively. The final result is that the Weierstrass
equation has the form
\[
y^2 = x^3 + e t^2 x^2 + f t^3 (t-1) x + g t^5 (t-1)^2.
\]
Since we must quotient by the Weierstrass scaling of $x$ and $y$, we
see that the resulting moduli space is a weighted projective space
$\Proj(2:4:6)$ in the coordinates $e,f,g$.

\subsection{Map to $\A_2$ and equation of $\widetilde{\sL}_2$}

To compute the map to $\A_2$, we put the surface in the standard form
described in \cite{Kum1}. Here, it merely involves shifting $x$ so
that the coefficient of $x^2$ is zero, and then rescaling so that the
coefficient of $xt^3$ is $-1$.  We obtain the Weierstrass equation
\[
y^2 = x^3 - t^3\left( \frac{3f - e^2}{3} t - 1 \right)x + t^5 \left( fg t^2 -\frac{54g+9ef-2e^3}{27} t + \frac{3g + ef}{3f} \right).
\]
We may now read out the Igusa--Clebsch invariants, which are functions
of $r = f/e^2$ and $s = g/e^3$. We obtain
\begin{equation} \label{igcl4}
\begin{aligned}
I_2 &= 8(3s+r)/r,   \\
I_4 &= -4(3r-1),  \\ 
I_6 &= -4(6rs-8s+5r^2-2r)/r,   \\ 
I_{10} &= 4rs.
\end{aligned}
\end{equation}

By considering the possible ways the rank could jump and give K3
surfaces whose N\'eron--Severi lattices have the appropriate
discriminant, we obtain the following list of possible factors for the
branch locus:
\[
r,\quad s,\quad (4s - r^2),\quad (9s-3r^2+r),\quad (27s^2+36rs-s-16r^3+8r^2-r).
\]

An arithmetic verification as in \cite{EK} pins down the correct double cover.
\begin{theorem}
A birational model for the surface $\widetilde{\sL}_2$ (equivalently,
for $Y_{-}(4)$) is given by
\[
z^2 = -r(27s^2+36rs-s-16r^3+8r^2-r).
\]
It is a rational surface. The Humbert surface is birational to the
$(r,s)$-plane. In these coordinates, the Igusa--Clebsch invariants of a
point on the moduli space are given by the formulas in \eqref{igcl4}
above.
\end{theorem}
This is a conic bundle over $\Proj^1_r$. Setting $s = 0$ makes this
expression a square, so in fact this conic bundle has a section, and
is a rational surface over the base field $\Q$. To parametrize it, we
complete the square by letting $z = ms + r(4r-1)$. Solving for $s$, we
obtain
\[
s = -\frac{r(8mr+36r-2m-1)}{27r+m^2}, \quad z = \frac{r(108r^2-4m^2r-36mr-27r+m^2+m)}{27r+m^2}.
\]

\subsection{Comparison with previous formulae}

The classical formulae over an algebraically closed field are
described in \cite{SV}, for instance. We start with a genus $2$ curve
\[
y^2 = x^6 - s_1 x^4 + s_2 x^2 - 1
\]
which has two obvious maps to elliptic curves. The group $D_{12}$ acts
on the family of such Weierstrass equations as follows: its two
generators take $(x,y,s_1,s_2)$ to $(\zeta_6 x, y, s_1 \omega, s_2
\omega^2)$ or to $(1/x,iy/x^3,s_2,s_1)$, where $\zeta_6$ is a
primitive sixth root of unity, $\omega = \zeta_6^2$ and $i =
\sqrt{-1}$. The invariants of this action on the polynomial ring
generated by the parameters $s_1$ and $s_2$ are $u = s_1 s_2$ and $v =
s_1^3 + s_2^3$. Comparing Igusa--Clebsch invariants, we obtain the
relation between our coordinates $r,s$ above and $u,v$.
\begin{align*}
r &= \frac{s_1^2 s_2^2 -4 s_1^3-4 s_2^3+18 s_1 s_2-27}{4(s_1 s_2-9)^2} = \frac{u^2-4v +18u-27}{4(u-9)^2}, \\
s &= \frac{2(s_1^2 s_2^2- 4 s_1^3-4 s_2^3 + 18 s_1 s_2-27)}{(s_1 s_2-9)^3}  = \frac{2(u^2-4v+18u-27)}{(u-9)^3}
\end{align*}
with inverse
\begin{align*}
u &= (9s+8r)/s, \\
v &= 2(27s^2+36rs-32r^3+8r^2)/s^2.
\end{align*}

The branch locus, up to squares, equals $(v^2-4u^3) = (s_1^3 -
s_2^3)^2$. Recall that interchanging $s_1$ and $s_2$ changes the
equation of the genus $2$ curve by $x \to 1/x$, and so switches the
two elliptic subfields. So it agrees with the classical double
cover. Another explicit way to see this is through the $j$-invariants,
which we compute next.

\subsection{Tautological genus $2$ curve and elliptic curves}

Over the moduli space $Y_{-}(4)$ (or $\widetilde{\sL}_2$), it is
possible to write down a tautological family of genus $2$ curves with
$(2,2)$ reducible Jacobian. In terms of $r,s,z$ above, the sextic
defining the family is
\[
y^2 = x^6 + (9s+8r)x^4 -(8rz-27s^2-36rs+32r^3-8r^2)x^2 -s(8rz-27s^2-36rs+32r^3-8r^2).
\]

The $j$-invariants corresponding to the two elliptic subfields are 
\[
\pm \frac{128(4r-1)}{s^2} z - \frac{64(36rs-s-32r^3+16r^2-2r)}{s^2},
\]
which again shows that the involution switching the $j$-invariants is
the one corresponding to the double cover.

Since the Hilbert modular surface is rational with parameters $r,m$,
we may further simplify the tautological family above.

\begin{theorem}
In terms of the parameters $(r,m)$ on the Hilbert modular surface, a
tautological family of genus $2$ curve with $(2,2)$-reducible Jacobian
is given by
\[
y^2 = x^6-(2m+3)x^4-(36r-4m-3)x^2+4(2m+9)r-2m-1.
\]
\end{theorem}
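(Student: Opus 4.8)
The plan is to obtain the family in $(r,m)$-coordinates simply by substituting the parametrization of the rational surface into the already-established tautological family in $(r,s,z)$-coordinates, and then simplifying by an allowable change of variable on the genus $2$ curve. Recall that the previous subsection produced the sextic
\[
y^2 = x^6 + (9s+8r)x^4 -(8rz-27s^2-36rs+32r^3-8r^2)x^2 -s(8rz-27s^2-36rs+32r^3-8r^2),
\]
together with the rational parametrization $s = -r(8mr+36r-2m-1)/(27r+m^2)$ and $z = r(108r^2-4m^2r-36mr-27r+m^2+m)/(27r+m^2)$. So the first step is a direct back-substitution: plug these expressions for $s$ and $z$ into the three nonconstant coefficients of the sextic. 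Each coefficient will emerge as a rational function of $r$ and $m$ with denominator a power of $(27r+m^2)$.

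The second step is to clear these denominators. A sextic $y^2 = \sum_{i} c_i x^{2i}$ (with only even powers of $x$) may be rescaled by $x \mapsto \lambda x$, $y \mapsto \lambda^3 y$, which multiplies the coefficient of $x^{2i}$ by $\lambda^{6-2i}$; one may also scale $y$ alone by a constant, multiplying all coefficients uniformly. The target family
\[
y^2 = x^6-(2m+3)x^4-(36r-4m-3)x^2+4(2m+9)r-2m-1
\]
has leading coefficient $1$, so I would choose the scaling of $x$ and the overall scaling of $y$ so as to normalize the leading coefficient and clear the common power of $(27r+m^2)$, then verify that the three remaining coefficients match those in the target. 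Since such scalings give isomorphic curves, this transformation does not change the point of the moduli space, and the resulting family is still tautological.

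The third step is the actual verification that, after the chosen rescaling, the coefficient of $x^4$ is exactly $-(2m+3)$, the coefficient of $x^2$ is exactly $-(36r-4m-3)$, and the constant term is exactly $4(2m+9)r-2m-1$. This is the bulk of the work, but it is entirely mechanical polynomial algebra in $\Q(r,m)$; the content lies in confirming that the same scaling factor works simultaneously for all three coefficients, which is guaranteed precisely because the two families define the same curve over the function field. The main obstacle, such as it is, is purely bookkeeping: keeping track of the correct power of $(27r+m^2)$ and the correct $\lambda$, since an error in either will make the coefficients fail to match even though the underlying curves agree up to isomorphism. Because everything reduces to checking a polynomial identity, I expect no conceptual difficulty, and the claim follows once the three coefficient identities are confirmed.
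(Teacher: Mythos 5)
Your proposal follows essentially the same route as the paper, which obtains this family precisely by back-substituting the parametrization $s = -r(8mr+36r-2m-1)/(27r+m^2)$, $z = r(108r^2-4m^2r-36mr-27r+m^2+m)/(27r+m^2)$ into the $(r,s,z)$-family and simplifying by a coefficient rescaling (and indeed the identities check out: with $u = \lambda^2 = r(36r-4m-3)/(27r+m^2)$ the three coefficients become exactly $-(2m+3)$, $-(36r-4m-3)$, and $4(2m+9)r-2m-1$, using the factorization $9s+8r = r(2m+3)(4m+3-36r)/(27r+m^2)$ and the identity $8FD-27E^2+36ED+32rD^2-8D^2=(36r-4m-3)^3$ with $D = 27r+m^2$, $E = 8mr+36r-2m-1$, $F = 108r^2-4m^2r-36mr-27r+m^2+m$). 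One small imprecision: since $u$ is not a square in $\Q(r,m)$, the substitution $x \mapsto \lambda x$, $y \mapsto \lambda^3 y$ produces a quadratic twist of the original family rather than a curve isomorphic to it over the base, but this is harmless for the theorem as stated because twisting changes neither the Igusa--Clebsch invariants (hence not the coarse moduli point) nor the $(2,2)$-splitting of the Jacobian, nor the normalization of the eigenforms.
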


\subsection{Special loci}
We now describe some special curves on the Hilbert modular surface.
\begin{enumerate}
\item The genus $0$ curve $r = 0$ is part of the branch locus; it is
  also part of the product locus, i.e.~the boundary $(\A_2 \backslash
  \M_2) \cap \sH_4$ which corresponds to products of elliptic curves
  rather than Jacobians of genus $2$ curves.
\item For the curves $(r,s,z) = (r,0,\pm r(4r-1))$, the invariant
  $I_{10} =0$. They correspond to degenerations of the genus $2$ curve
  to a rational curve.
\item The curve $s = r^2/4$ on the Humbert surface pulls back to a
  genus $0$ curve on $Y_{-}(4)$, isomorphic to $\Proj^1$. It is a
  modular curve, corresponding to Jacobians with endomorphisms by a
  (split) quaternion algebra. The minimal degree of the isogeny
  between the two elliptic curves we have associated above is
  $3$. This curve is the same as that defined in \cite[formula
    (11)]{SV}.
\item The curve $r = 1/4$ also lifts to a $\Proj^1$ on the Hilbert
  modular surface, corresponding to the modular curve for which $j_1 =
  j_2$.
\item The curve $r = -1/2, z = \pm (27 s - 2)/18$ corresponds to one
  of the $j$-invariants being $0$ (i.e.~having CM by $\Z[\omega]$).
\item The curve $27s^2+36rs-s-16r^3+8r^2-r = 0$ has genus $0$ and is
  part of the branch locus. It is parametrized by $(r,s) =
  \big((t^2-1)/12, (t-2)^2(t+1)/54 \big)$ and corresponds to the
  $j$-invariants being equal.
\item The curve $3375s^2-1440rs-152s-64r^3+48r^2-12r+1 = 0$ lifts to a
  genus $0$ curve on the Hilbert modular surface; its parametrization
  on the Humbert surface is given by $(r,s) = \big((t^2 - 1)/60$, \\
  $(t+4)^3/3375 \big)$. It corresponds to the elliptic curves being
  $2$-isogenous.
\item There are also several simple non-modular curves, for example
  the curves $s = 1/27$, $s = 2/27$, and $s = r^2/3 - r/9$. These lift
  to genus $0$ curves (in fact, isomorphic to $\Proj^1$) on
  $Y_{-}(4)$.
\end{enumerate}
\begin{remark}
Since the Hilbert modular surface is rational, in fact we have a
$2$-parameter family of genus $2$ curves with $(2,2)$-split Jacobian
given by the equation above, and we can specialize it to produce
infinitely many examples of $1$-parameter families of such genus
$2$-curves.
\end{remark}

\begin{remark}
To check the statement that elliptic curves corresponding to points on
a specific curve on $\widetilde{\sL}_2$ are (say) $2$-isogenous, one
can simply substitute in the $j$-invariants into the appropriate
classical modular polynomial. Another way to verify that such a curve
on the Hilbert modular surface is modular is to check that the Picard
number of the elliptic K3 surface jumps (for instance, due to an extra
reducible fiber, or an extra section). From now on, we shall make such
statements without further justification; the interested reader may
carry out the (easy) check.
\end{remark}

\section{Discriminant $9$}

\subsection{Parametrization}

Next, we describe degree $3$ elliptic subfields, which were treated by
Shaska in \cite{Sh}. A simple family of elliptic K3 surfaces which
will allow us to recover this moduli space is that with $E_8$ and
$A_8$ fibers. As before, we put the $E_8$ fiber at $\infty$, letting
us write the Weierstrass equation as
\[
y^2 = x^3 + a(t)x^2 + 2b(t)x + c(t)
\]
with $a,b,c$ polynomials of degrees $2,4,7$ respectively. Assuming the
$A_8$ fiber is at $t = 0$, and shifting $x$ by a suitable quadratic
polynomial, we can rewrite the Weierstrass equation as
\[
y^2 = x^3 + (a_0 + a_1 t + a_2 t^2) x^2 + 2t^3 (b_0 + b_1 t) x + t^6 (c_0 + c_1 t).
\]
This surface generically has an $A_5$ fiber at $t = 0$. To ensure an
$A_8$ fiber, we need three more orders of vanishing for the
discriminant at $t = 0$. Since we need the components of the $\I_9$
fiber to be defined over the ground field, the coefficient $a_0$ is a
nonzero square; by scaling $x$ we may assume it is $1$. Similarly, by
scaling $t$, we can arrange $c_0 = c_1$.

The three orders of vanishing succesively yield $b_0 = c_0^2$, $b_1 =
b_0 (a_1 + 1)/2$, and $a_2 = (a_1 - 1)^2/4$. Relabeling $c_0 = r$ and
$a_1 = s$, we obtain the final Weierstrass equation
\[
y^2 = x^3 + \left(\frac{(s-1)^2}{4} t^2+ s t + 1 \right)x^2 + t^3
r\Big( (s + 1)t + 2\Big)x + r^2t^6(t+1)
\]
over the rational moduli space with parameters $r,s$.

\subsection{Map to $\A_2$ and equation of $\widetilde{\sL}_3$}

To compute the map to $\A_2$, we go to an elliptic fibration with
$E_8$ and $E_7$ fibers via a $2$-neighbor step (see \cite{Kum2, EK}).

In terms of Dynkin diagrams of rational curves on the K3 surface, the
picture is as follows:

\begin{center}
\begin{tikzpicture}

\draw (-2,0)--(7,0)--(7.5,0.866)--(7.5,0.866)--(10.5,0.866)--(10.5,-0.866)--(7.5
,-0.866)--(7,0);
\draw (0,0)--(0,1);
\draw [very thick] (9.5, 0.866)--(7.5,0.866)--(7,0)--(7.5,-0.866)--(9.5,-0.866);
\draw [very thick] (6,0)--(7,0);

\draw (6,0) circle (0.2);
\fill [white] (-2,0) circle (0.1);
\fill [white] (-1,0) circle (0.1);
\fill [white] (0,0) circle (0.1);
\fill [white] (1,0) circle (0.1);
\fill [white] (2,0) circle (0.1);
\fill [white] (3,0) circle (0.1);
\fill [white] (4,0) circle (0.1);
\fill [white] (5,0) circle (0.1);
\fill [black] (6,0) circle (0.1);
\fill [white] (0,1) circle (0.1);
\fill [black] (7,0) circle (0.1);
\fill [black] (7.5,0.866) circle (0.1);
\fill [black] (7.5,-0.866) circle (0.1);
\fill [black] (8.5,0.866) circle (0.1);
\fill [black] (8.5,-0.866) circle (0.1);
\fill [black] (9.5,0.866) circle (0.1);
\fill [black] (9.5,-0.866) circle (0.1);
\fill [white] (10.5,0.866) circle (0.1);
\fill [white] (10.5,-0.866) circle (0.1);

\draw (-2,0) circle (0.1);
\draw (-1,0) circle (0.1);
\draw (0,0) circle (0.1);
\draw (1,0) circle (0.1);
\draw (2,0) circle (0.1);
\draw (3,0) circle (0.1);
\draw (4,0) circle (0.1);
\draw (5,0) circle (0.1);
\draw (6,0) circle (0.1);
\draw (0,1) circle (0.1);
\draw (7,0) circle (0.1);
\draw (7.5,0.866) circle (0.1);
\draw (7.5,-0.866) circle (0.1);
\draw (8.5,0.866) circle (0.1);
\draw (8.5,-0.866) circle (0.1);
\draw (9.5,0.866) circle (0.1);
\draw (9.5,-0.866) circle (0.1);
\draw (10.5,0.866) circle (0.1);
\draw (10.5,-0.866) circle (0.1);

\end{tikzpicture}
\end{center}

The dark vertices form a sub-diagram cutting out an $E_7$ fiber. The
corresponding elliptic parameter (unique up to fractional linear
transformations) is $w = (x + rt^3)/t^4$. Substituting $x = wt^4 -
rt^3$ in to the Weierstrass equation, and dividing the right hand side
by $t^8$, we obtain a quartic in $t$, giving rise to a genus $1$ curve
over $\Proj^1_w$. In fact, the genus $1$ fibration has a section (it
is evident from the diagram above), so we may convert to the Jacobian
using classical formulas (see \cite{AKMMMP}, for instance). After some
scaling and normalization, we arrive at a Weierstrass equation in
standard form, with the following equation
\begin{align*}
Y^2 &= X^3 + \Big( -27(s^4-4s^3+6s^2-48rs-4s+192r+1)T^4 - 5184 T^3 \Big)X+ 15552 (s^2+4s-2)  T^5 \\
    & \qquad + 54(s^6-6s^5+15s^4-72rs^3-20s^3-432rs^2+15s^2+1080rs-6s+864r^2-576r+1) T^6 \\
&  \qquad + 46656r^3    T^7 .
\end{align*}

From this we may read out the Igusa--Clebsch invariants.
\begin{equation} \label{igcl9}
\begin{aligned}
I_2 &= 8(s^2+4s-2), \\
I_4 &= 4(s^4-4s^3+6s^2-48rs-4s+192r+1), \\
I_6 &= 8(s^6+2s^5-21s^4-40rs^3+44s^3+144rs^2-41s^2+792rs+18s-288r^2-320r-3), \\
I_{10} &= 2^{14} r^3 .
\end{aligned}
\end{equation}

A similar analysis as for discriminant $4$ computes the double cover
of the Humbert surface giving the Hilbert modular surface.
\begin{theorem}
A birational model for the surface $\widetilde{\sL}_3$ (equivalently,
for $Y_{-}(9)$) is given by
\[
z^2 = 11664r^2-8(54s^3+27s^2-72s+23)r+ (s-1)^4(2s-1)^2.
\]
It is a rational surface. The Humbert surface is birational to the
$(r,s)$-plane. In these coordinates, the Igusa--Clebsch invariants of a
point on the moduli space are given by the formulas in \eqref{igcl9}
above.
\end{theorem}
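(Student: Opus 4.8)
The plan is to run the four-step method of \cite{EK}, as generalized in Section~\ref{setup}, exactly as in the discriminant~$4$ case treated above. Two of the four assertions are already in hand from the parametrization: the moduli space $\M_{L_9}$ of K3 surfaces lattice polarized by $L_9$ is rational with coordinates $(r,s)$, and by Theorem~\ref{humbert} the induced morphism $\M_{L_9}\to\sH_9$ is birational and surjective, so the Humbert surface $\sH_9$ is birational to the $(r,s)$-plane. The $2$-neighbor step to the $E_8,E_7$ fibration performed above, combined with the main result of \cite{Kum1}, reads off the map to $\A_2$ in Igusa--Clebsch coordinates, which is precisely \eqref{igcl9}. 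It therefore remains to identify the correct double cover $Y_{-}(9)\to\sH_9$ and to prove rationality.

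To locate the branch locus I would first invoke the Hausmann proposition: since $D=9$ is odd, the one-dimensional fixed locus of $\sigma$ is the union of the modular curves $F_1$ and $F_9$. Along $F_9$ the associated K3 surface has Picard rank jumping to $19$ with discriminant $2D=18$, while along $F_1$ the propositions above force the $\II^*,\III^*$ fibration to degenerate, a degeneration visible in the denominators of the Weierstrass coefficients and Igusa--Clebsch invariants and in the numerator of $I_{10}=2^{14}r^3$. Enumerating the irreducible polynomials on the $(r,s)$-plane cutting out loci where $\NS$ jumps to rank~$19$ with the appropriate discriminant, together with these denominator and $I_{10}$ factors, produces a finite list of candidate branch components, exactly analogous to the list $r,\,s,\,(4s-r^2),\dots$ in the discriminant~$4$ case.

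The crux, and the main obstacle, is the arithmetic verification that determines which candidates actually constitute the branch divisor and fixes the overall quadratic twist. Following Step~$4$ of \cite{EK}, I would reduce the K3 family and the tautological genus~$2$ curve modulo several good primes, compute the characteristic polynomials of Frobenius by point counting, and match them across the abelian and K3 sides to detect over which quadratic extension the action of $\sqrt{9}$ (rather than its Galois conjugate) is realized. This pins down the branch divisor, which here turns out to be a single irreducible curve---the image of $F_9$---and the correct twist, yielding the defining equation in the statement up to squares. Getting the set of branch components and the arithmetic twist right at once is the delicate part.

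Finally, for rationality I would note that the defining equation is quadratic in $r$, presenting the surface as a conic bundle over $\Proj^1_s$. Setting $r=0$ makes the right-hand side equal $(s-1)^4(2s-1)^2$, a perfect square, so $(r,z)=\bigl(0,(s-1)^2(2s-1)\bigr)$ is a section. A conic bundle over $\Proj^1$ carrying a section has a rational point over $\Q(s)$ on its generic fibre, hence is rational over $\Q$; completing the square in $r$ relative to this section then parametrizes the surface explicitly, just as in the discriminant~$4$ case.
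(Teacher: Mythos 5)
Your proposal is correct and follows essentially the same route as the paper, whose own justification is simply ``a similar analysis as for discriminant $4$'': enumerate candidate branch components via the Picard-rank-$19$ jumps (discriminant $2D$ for $F_D$, with the $F_1$ locus excluded from the $\II^*,\III^*$ model and hence visible only in denominators and the numerator of $I_{10}=2^{14}r^3$), pin down the divisor and quadratic twist by point counting and matching Frobenius data as in Step~4 of \cite{EK}, and prove rationality from the conic-bundle structure over $\Proj^1_s$ with the section at $r=0$, where the right-hand side becomes $(s-1)^4(2s-1)^2$. Your identification of the branch divisor as the single irreducible curve coming from $F_9$ is consistent with the paper, which lists that sextic as the branch locus (corresponding to $j_1=j_2$) and notes that $r=0$ splits into the two curves $z=\pm(s-1)^2(2s-1)$.
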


The equation above describes a conic bundle over $\Proj^1_s$. Setting
$r = 0$ makes the right side a square, so the conic bundle has a
section. To parametrize the surface, we set $z = 4mr + (s-1)^2(2s-1)$
and solve for $r$, obtaining
\begin{align*}
r &= -\frac{(s-1)^2(2s-1)m + (54s^3+27s^2-72s+23)}{2(m-27)(m+27)}, \\ 
z &= -\frac{(s-1)^2(2s-1)(m^2 + 729) + 2(54s^3+27s^2-72s+23)m}{(m-27)(m+27)}.
\end{align*}

\subsection{Tautological curve}

A tautological curve over the Hilbert modular surface (which is
rational in the parameters $m,s$ above) is given by
\[
y^2 = \left( x^3 + 3(3s-1)x^2  -\frac{2(m-27)(9s-5)^2}{(m+27)} \right)  \left( x^3 -\frac{3(m-27)(9s-5)^2}{4(m+27)}  x + \frac{(m-27)(9s-5)^3}{4(m+27)} \right).
\]

\begin{remark} This presentation of the curve has the feature that the
eigen-differentials for the action of real multiplication by $\sO_9$
(alternatively, the pullbacks of the canonical differentials from the
two elliptic curves) are $dx/y$ and $x \, dx/y$. We say that the
Weierstrass equation is normalized.\footnote{There is still an extra
  degree of freedom from scaling the $x$-coordinate, and we make an
  arbitrary choice, designed to make the equation look ``nice''.} The
Weierstrass equations given in this paper and in the auxiliary files
for tautological families of genus $2$ curves (for each of the
discriminants) will always be normalized. To check this property of
being normalized amounts to a calculation using the ``eigenform
location'' algorithm of \cite{KM1}. In this paper, we omit the
details.
\end{remark}

\subsection{Elliptic curves}

Next, we compute the $j$-invariants associated of the two associated
elliptic curves. 

\subsubsection{Calculation of $j$-invariants}
To calculate the $j$-invariants in this and other sections, we used
the following computational technique. For many specializations $(r,s)
\in \Q^2$, we compute the genus $2$ curve from its Igusa--Clebsch
invariants. We then decompose its Jacobian and find out its simple
factors, using analytic techniques (using the function
\texttt{AnalyticJacobian} in \texttt{Magma}; see \cite{Wa2}). More
precisely, let $A$ be the analytic Jacobian. If the coordinates $r,s$
are chosen generically, its endomorphism ring will be $\sO_9$, the
quadratic ring of discriminant $9$. We solve the equation $\eta^2 =
3\eta$ in $\End(A)$. Then we can recover the period matrices of $E_1 =
A/\eta A$ and $E_2 = A/(\eta - 3)A$, and from there compute the two
$j$-invariants. In general, these are quadratic over the base field,
but their sum and product are rational. We use rational reconstruction
to recover these numbers $(j_1 + j_2)(r,s)$ and $(j_1 j_2)(r,s)$ from
floating-point approximations. Finally, we apply the above procedure
to enough specializations to enable us to recover $j_1 + j_2$ and $j_1
j_2$ as rational functions, by interpolation/linear algebra (using
some upper bounds on the degree as in \cite{Wa1}) .

We obtain the following expressions:
\begin{align*}
j_1 + j_2 &= (2 s^9-17 s^8+64 s^7-324 r s^6-140 s^6+1350 r s^5+196 s^5-2097 r s^4 \\
 & \qquad -182 s^4  +17496 r^2 s^3+1368 r s^3+112 s^3-23328 r^2 s^2-162 r s^2-44 s^2 \\
 & \qquad       +9720 r^2 s-198 r s+10 s-314928 r^3-432 r^2+63 r-1)/r^2 , \\
j_1 j_2 &= (s^4-4 s^3+6 s^2+432 r s-4 s-288 r+1)^3/r^3.
\end{align*}
The discriminant of $T^2 - T(j_1 + j_2) + j_1 j_2$ is, up to squares,
the branch locus of the double cover defining the Hilbert modular
surface. Solving in terms of $m$ and $s$, we have (up to interchange)
\begin{align*}
j_1 &= \frac{4 (m+27) (m s^2+27 s^2-2 m s+18 s+m-21)^3}{(m-27) (2 m s^3+54 s^3-5 m s^2+27 s^2+4 m s-72 s-m+23)}, \\
j_2 &= \frac{-2 (m-27) (m s^2-459 s^2-2 m s+486 s+m-123)^3}{(m+27) (2 m s^3+54 s^3-5 m s^2+27 s^2+4 m s-72 s-m+23)^2}.
\end{align*}

The above process may seem non-rigorous, but in fact can be made
completely rigorous, for instance, by demonstrating the real
multiplication by $\sO_9$ through an explicit correspondence (as
described in \cite{Wa1} or in more detail in \cite{KM2}). For reasons
of space, we do not describe this method explicitly here.

\subsubsection{Calculation of the morphism}

However, we will describe another method, suggested to us by Noam
Elkies, which makes use of the fact that the differentials on the
genus $2$ curve are normalized (i.e.~are eigenforms for real
multiplication).

We start with the (arbitrarily chosen) point $s = 5, m = 13$ on
$\widetilde{\sL}_3$. Rescaling the $x$-coordinate so the Weierstrass
equation has integer coefficients, the genus $2$ curve is given by
\[
y^2 = (x^3+420x-5600)(x^3+42x^2+1120).
\]
For convenience, let us instead start from the curve $C$
\[
y^2 = (-5600x^3+420x^2+1)(1120x^3+42x+1)
\]
(obtained by the change of variables $x \to 1/x, y \to y/x^3$) which
has the rational point $(0,1)$. The form $x \, dx/y$ is a normalized
differential on $C$ (since $dx/y \mapsto (-1/x^2)\, dx/(y/x^3) = -x \, dx/y$
under the above transformation). Assume that the associated elliptic
curve $E$ is given by
\[
y_1^2 = F x_1^3 + G x_1^2 + H x_1 + 1
\]
where $F$, $G$ and $H$ are (for the moment) undetermined
parameters. Also, let us assume without loss of generality that the
morphism $\phi \colon C \to E$ which we want to determine takes $(0,1) \in
C$ to $(0,1) \in E$. Note that the above model for $E$ can easily be
converted to standard Weierstrass form.

Our normalization assumption says that $\phi^*$ maps the canonical
differential $dz_1 = dx_1/y_1$ on $E$ to $dz = x \, dx/y$ on
$C$. Expanding in the formal ring about $(0,1)$ on these curves, we
compute as follows:
\[
\frac{x \, dx}{y} = \frac{x \, dx}{\big((-5600x^3+420x^2+1)(1120x^3+42x+1)\big)^{1/2}} = (x - 21x^2 + 903x^3/2 + \dotsb ) \, dx.
\]
Therefore 
\[
z = \int x \, dx/y = x^2/2 - 7 x^3 + \dotsb .
\]
On the other hand, we have 
\[
\frac{ dx_1}{y_1} = \frac{ dx_1}{( F x_1^3 + G x_1^2 + H x_1 + 1)^{1/2}} = \left(1 - \frac{H}{2} \, x_1 + \bigg(\frac{-G}{2} + \frac{3H^2}{8} \bigg) \, x_1^2 + \dotsb \right) dx_1 ,
\]
so 
\[
z_1 = \int dx_1/y_1 = x_1 - (H/4) x_1^2 + \dotsb.
\]
Now, since $dz = \phi^*(dz_1)$, we have the equation $z =
\phi^*(z_1)$, which we can write using the above expressions as
\[
x^2/2 - 7x^3 + \dotsb = \phi^*\big( x_1 - (H/4) x_1^2 + \dotsb \big) = x_E - (H/4) x_E^2 + \dotsb,
\]
where $x_E = \phi^*(x_1)$. We can invert the formal series on the
right to write
\[
x_E = x^2/2 - 7x^3 + (H/16 + 903/8)x^4 + \dotsb .
\]
Now, we know that $x_E = \phi^*(x_1)$ must be a rational function of
degree $3$ in $x$ (for a degree $n$ map $C \to E$, it will have degree
$n$).  Since its formal expansion starts $x^2/2 + \dotsb$, it must
equal
\[
\frac{x^2(1+kx)}{2(1 + mx + nx^2 + px^3)}
\]
for some constants $k,m,n,p$ which are also undetermined.

We expand the above rational expression in a power series about $x =
0$ and match with the previous formal expression for $x_E$. The first
few coefficients give us linear equations which can be solved for
$m,n,p$ and $k$ in that order, and then we can solve the next few
polynomial equations for $F,G,H$. Renormalizing the equations of $C$
and $E$, we find that the elliptic curve $E$ is given by the
Weierstrass equation
\[
y_1^2 = x_1^3 + 4900 x_1^2 + 7031500 x_1 + 2401000000,
\]
and the morphism $C \to E$ by 
\[
x_1 =  -\frac{882000(x-14)}{x^3+420x-5600}, \qquad 
y_1 =\frac{ 49000 y (x^3-21x^2-140)}{(x^3+420x-5600)^2}.
\]
(Note that the $j$-invariant of $E$ is $-2^5 \cdot 7 \cdot 17^3$,
which matches the expression given for $j_1$ at $s = 5, m = 13$.)

\subsubsection{Further remarks}

The above calculation produced the morphism $C \to E_1$ to one of the
associated elliptic curves, for a single point on the moduli space. To
produce it over the whole base, one may work with the tautological
genus $2$ curve, and with coefficients in the function field of
$\widetilde{\sL}_3$. Alternatively, we may sample many such points and
interpolate to find the coefficients of the rational map defining the
morphism, and also the Weierstrass coefficients of the elliptic
curve. Once the map is produced, it is trivial to verify it (by
substituting in to the equation of the elliptic curve). To find the
other map $C \to E_2$, we may use the other normalized differential on
the genus $2$ curve and apply the above process to it. More simply, we
may rewrite the equation of $E_1$ and the map $C \to E_1$ in terms of
the coordinates $r,s$ on the Humbert surface and the square root $z$
defining the double cover, and conjugate $z$ to $-z$ everywhere.

We omit the details of these calculations for the higher discriminants
treated in this paper, as the expressions become much more complicated
with increasing discriminant. We may, in fact, use the knowledge of
the $j$-invariants obtained by the first (sampling and interpolation)
method to reduce our work in this process, since they give us an extra
equation satisfied by the Weierstrass coefficients $F,G,H$.

Finally, the above process hinges on finding normalized Weierstrass
forms for the genus $2$ curves in the first place. For a single point
$(r_0,s_0,z_0)$ on the moduli space (with $r_0, s_0 \in \Q$ say),
i.e.~for a single genus $2$ curve, we do this by applying the
eigenform location algorithm of \cite{KM2}, which indicates the
fractional linear transformation needed to transform the curve to
normalized form. At that point, the Weierstrass coefficients are
floating point numbers, but we may apply a version of rational
reconstruction to recognize them as elements of $\Q(z_0)$. Finally,
with enough sampling, we may interpolate to write a normalized
tautological curve over the moduli space.

\subsection{Comparison with previous results}

Comparing our absolute invariants $I_4/I_2^2, \, I_2 I_4/I_6,\, I_4
I_6/I_{10}$ to those of Shaska, we obtain the relation between the
coordinates here and those in \cite{Sh}. It is given by
\[
r = 4096 r_2^3/r_1^4, \qquad s = -48r_2/r_1,
\]
with inverse
\[
r_1 = -s^3/(27r), \qquad r_2 = s^4/(1296 r). 
\]
Using this change of coordinates, we see that the double cover is the
same as the $(u,v)$ double cover in \cite{Sh}\footnote{There is a typo
  in equation (14) of \cite{Sh}: the numbers $27$ and $1296$ should be
  in the denominator rather than the numerator.}.

Let us check the compatibility of the $j$-invariants we found above
with the values in \cite{Sh}. In the setup of that paper, the genus
$2$ curve is given by
\[
y^2 = (x^3 + ax^2 + bx + 1)(4x^3 + b^2x^2 + 2bx + 1),
\]
where $a,b$ are related to the parameters $u,v$ on the Hilbert modular surface by 
\[
u = ab, \qquad v = b^3.
\]
Note that the parameters $r_1, r_2$ on the Humbert surface are 
\[
r_1 = \frac{v (v-2 u-9)^3}{27 (4 v^2-u^2 v-18 u v+27 v+4 u^3)}, \qquad
r_2 = -\frac{v (v-2 u-9)^4}{1296 (v-27) (4 v^2-u^2 v-18 u v+27 v+4 u^3)}.
\]
Define\footnote{There is a typo on pg. 266 of \cite{Sh}: in equation
  (6), the first factor in $\Delta$ should be the quantity $R$ as we
  have defined it, rather than its square.} $R = 4a^3 + 27 - 18ab -
a^2b^2 + 4b^3$ , $F(x) = x^3 + ax^2 + bx + 1$, and consider the
rational functions $v = y(x^3 - bx -2)/F(x)^2$ and $u = x^2/(x^3 +
ax^2 + bx + 1)$. Then $C$ maps to the genus $1$ curve
\[
v^2 = u^3 + \frac{2(ab^2 - 6a^2 + 9b)}{R}\,  u^2  + \frac{12a - b^2}{R} \, u - \frac{4}{R} .
\]
Computing the $j$-invariant of this elliptic curve, and using the
transformation formulas
\[
u = (ms+27s-m+9)/4, \qquad v = -(m-27)/2,
\]
we see that it agrees with $j_2$.

\subsection{Special loci}
We now list several curves of interest on the Hilbert modular surface.
\begin{enumerate}
\item The rational curves $r = 0, z = \pm (s-1)^2 (2s -1)$ correspond
  to $I_{10} = 0$.
\item The curve $r = (s-1)^2/4$ on the Humbert surface lifts to a
  rational curve, whose points correspond to Jacobians with
  endomorphisms by a split quaternion algebra. The associated elliptic
  curves are related by a $5$-isogeny.
\item The curves $s = 5/9, z = 4(19683r - 4)/729$ are non-modular, but
  correspond to the ``degenerate'' case considered by Shaska in
  \cite[equation (12)]{Sh}.
\item The modular genus $0$ curve
\[
4s^6-20s^5+41s^4-432rs^3-44s^3-216rs^2+26s^2+576rs-8s+11664r^2-184r+1 = 0
\]
is part of the branch locus. It is parametrized by $(r,s) =
\big( (t+1)^2(t+4)^4/(729t^6),(11t^2+24t+16)/(6t^2) \big)$ and
corresponds to the two elliptic curves being isomorphic.
\item The curves $s = 1$, $s = 1/3$ lift to non-modular rational
  curves on $Y_{-}(9)$.
\end{enumerate}

\section{Discriminant $16$}

\subsection{Parametrization}

We start with a family of elliptic K3 surfaces with bad fibers of
types $E_7$ and $D_8$, and a section of height $2 = 4 - 2$. A
Weierstrass equation for such a K3 surface has the form
\[
y^2 = x^2 + t a(t) x^2 + t^4 b(t) x+ t^7 c
\]
with $a,b$ linear in $t$ and $c$ constant. Since $a(0) \neq 0$ to have
a fiber of type no worse than $D_8$ at $0$, we may write
\[
a = a_0(1 + a_1 t), \qquad b = a_0 (b_0 + b_1t), \qquad c = c_0 a_0.
\]
We have a degree of freedom (scaling $t$), so we scale to have $a_1
=1$. To have the leaves of the $D_8$ fiber defined over the ground
field, we set $b_0 = h + k$ and $c_0 = hk$. Next, we impose a section
of height $2$: its $x$-coordinate must have the form $t^3(-h + g^2t)$
for some $g,h$. Substituting in to the Weierstrass equation and
completing the square, we get a system of equations, which can be
solved for $b_1$ and $k$. We finally use the remaining degree of
freedom (Weierstrass scaling of $x$ and $y$) to set $g = 1$, since the
variable $g$ has weight $1$. Thus, we get a rational moduli space in
the parameters $a_0$ and $h$. The Weierstrass equation suggests
the convenient change of variables $h = 2s, a_0 = 2r + h^2/4$.

The final Weierstrass equation is 
\[
y^2 = x^3 + (s^2 + 2r)t(t+1)x^2 - t^4\big( (s^2 - 2rs + 2r)t - (2s^3 + 4rs + r^2) \big) x  + 2r^2 s t^7.
\]
Note that the section has $x$-coordinate $-2s t^3 + t^4$.

\subsection{Map to $\A_2$ and equation of $\widetilde{\sL}_4$}

We go to an $E_8 E_7$ fibration by a $2$-neighbor step, as illustrated
below.

\begin{center}
\begin{tikzpicture}

\draw (-2,0)--(12,0);
\draw (1,0)--(1,1);
\draw (7,0)--(7,1);
\draw (11,0)--(11,1);
\draw [very thick] (5,0)--(11,0)--(11,1);
\draw [very thick] (7,0)--(7,1);
\draw [bend right] (6,2) to (4,0);
\draw [bend left] (6,2) to (11,1);

\draw (5,0) circle (0.2);
\fill [white] (-2,0) circle (0.1);
\fill [white] (-1,0) circle (0.1);
\fill [white] (0,0) circle (0.1);
\fill [white] (1,0) circle (0.1);
\fill [white] (2,0) circle (0.1);
\fill [white] (3,0) circle (0.1);
\fill [white] (4,0) circle (0.1);
\fill [black] (5,0) circle (0.1);
\fill [black] (6,0) circle (0.1);
\fill [white] (1,1) circle (0.1);
\fill [black] (7,0) circle (0.1);
\fill [black] (8,0) circle (0.1);
\fill [black] (9,0) circle (0.1);
\fill [black] (10,0) circle (0.1);
\fill [black] (11,0) circle (0.1);
\fill [white] (12,0) circle (0.1);
\fill [black] (7,1) circle (0.1);
\fill [black] (11,1) circle (0.1);
\fill [white] (6,2) circle (0.1);

\draw (-2,0) circle (0.1);
\draw (-1,0) circle (0.1);
\draw (0,0) circle (0.1);
\draw (1,0) circle (0.1);
\draw (2,0) circle (0.1);
\draw (3,0) circle (0.1);
\draw (4,0) circle (0.1);
\draw (5,0) circle (0.1);
\draw (6,0) circle (0.1);
\draw (1,1) circle (0.1);
\draw (7,0) circle (0.1);
\draw (8,0) circle (0.1);
\draw (9,0) circle (0.1);
\draw (10,0) circle (0.1);
\draw (11,0) circle (0.1);
\draw (12,0) circle (0.1);
\draw (7,1) circle (0.1);
\draw (11,1) circle (0.1);
\draw (6,2) circle (0.1);

\end{tikzpicture}
\end{center}

An elliptic parameter is $w = \big(x + r^2 t^3/(s^2 + 2r)\big)/t^4$.
As in the previous section, we transform to the new elliptic
fibration, which has the desired $E_8$ and $E_7$ fibers, and proceed
to read out the Igusa--Clebsch invariants. They are given as follows:
\begin{equation} \label{igcl16}
\begin{aligned}
I_2 &= 2(3s^6-12rs^5+13r^2s^4+18rs^4-2r^3s^3-48r^2s^3+28r^3s^2 \\
    & \qquad    +36r^2s^2+2r^4s-48r^3s+4r^4+24r^3)/\big(r^2(s^2-2rs+2r)\big), \\
I_4 &= (s^4+24s^3+4rs^2+48rs+r^2)/4, \\
I_6 &= (8s^{10}-32rs^9+192s^9+34r^2s^8-688rs^8-4r^3s^7+624r^2s^7+1536rs^7 \\
    &\qquad -16r^3s^6-4294r^2s^6-8r^4s^5+3000r^3s^5+4608r^2s^5-109r^4s^4 \\
    &\qquad -8612r^3s^4+26r^5s^3+3488r^4s^3+6144r^3s^3+124r^5s^2-5576r^4s^2 \\
    &\qquad +4r^6s+480r^5s+3072r^4s+12r^6+208r^5)/\big(16r^2(s^2-2rs+2r)\big), \\
I_{10} &= r^2(s^2+2r)(s^2-2rs+2r)/256.
\end{aligned}
\end{equation}

We then compute the double cover defining the Hilbert modular surface.
\begin{theorem}
A birational model for the surface $\widetilde{\sL}_4$ (equivalently,
for $Y_{-}(16)$) is given by
\[
z^2 = 2s(s^2-2rs+2r)(2s^5-4rs^4-27s^4+76rs^3-32r^2s^2-108rs^2+144r^2s-64r^3 -108r^2).
\]
It is a rational surface. The Humbert surface is birational to the
$(r,s)$-plane. In these coordinates, the Igusa--Clebsch invariants of a
point on the moduli space are given by the formulas in \eqref{igcl16}
above.
\end{theorem}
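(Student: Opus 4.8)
The two assertions about the Igusa--Clebsch invariants \eqref{igcl16} and about $\sH_{16}$ being birational to the $(r,s)$-plane require no new work: they are exactly the output of the two preceding subsections. The reverse-engineering of Tate's algorithm produces a rational two-parameter family realizing the moduli space $\M_{L_{16}}$ of K3 surfaces lattice-polarized by $L_{16}$; Theorem \ref{humbert} identifies this moduli space birationally and surjectively with $\sH_{16}$; and the $2$-neighbor hop to the $E_8E_7$ fibration, combined with the main result of \cite{Kum1}, reads off \eqref{igcl16}. Hence the substance of the theorem is the displayed equation for the double cover $Y_{-}(16)\to\sH_{16}$ together with its rationality.

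To obtain the double cover I would run Step $4$ of the method of Section \ref{setup}. By the Hausmann proposition the geometric branch locus lies in the union of the modular curves $F_w$ with $w\in\{1,4,D/4,D\}=\{1,4,16\}$ for $D=16$. The genuine rank-jump components come from $F_{16}$, where $\NS$ acquires discriminant $2D=32$; these should be responsible for the factors $2s$ and the quintic. The delicate point, special to square discriminants, is the behaviour along $F_1$ and $F_4$: by the Proposition on $\II^*,\III^*$ fibrations no rank-$19$ K3 of discriminant $2$ or $8$ can carry the $E_8E_7$ fibration, so these curves are invisible as rank jumps in the good model and instead surface as poles of the Weierstrass data. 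Concretely, the quadratic factor $s^2-2rs+2r$ divides both the denominators in \eqref{igcl16} and the numerator of $I_{10}$, which is precisely how the $F_4$ locus (here $D/4=4$, so $F_{D/4}$ and the forbidden $F_4$ coincide) enters the branch equation. Assembling the resulting finite list of candidate irreducible factors fixes the squarefree part of the right-hand side up to a constant; the constant arithmetic twist is then determined by reducing the family modulo several primes and matching the characteristic polynomial of Frobenius on the K3 (equivalently the abelian surface) against the splitting of $z^2 = (\cdots)$, exactly as in \cite{EK}.

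For rationality I would produce an explicit birational parametrization. In contrast to the cases $n=2,3$, the right-hand side is quartic in $r$ and octic in $s$, so no conic-bundle structure is visible in the $(r,s)$-coordinates; the octic branch curve is, however, a union of the low-genus modular and special curves that appear in the special-loci analysis, hence very singular. I would use these singularities to choose a pencil of rational curves on $\sH_{16}\cong\Proj^2$ meeting the branch octic in effectively two points, turning the double cover into a conic bundle over $\Proj^1$, and then take one of the catalogued rational curves as a section; this forces rationality and yields the parametrization. Since $\widetilde{\sL}_4$ is classically rational after Bolza, the answer can be cross-checked against the classical family, as was done for $n=2,3$.

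The main obstacle is the Step $4$ analysis: because $D/4=4$ the curve $F_4$ plays a double role and must be recovered from the degeneration locus rather than from a clean rank jump, so disentangling the genuine branch components from the mere poles of the Weierstrass model --- and only then pinning down the arithmetic twist by point-counting --- is where the real care is needed.
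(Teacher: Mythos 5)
Your proposal is correct and follows essentially the same route as the paper: the same parametrization/neighbor-step/Igusa--Clebsch pipeline for the first assertions, the same Step~4 analysis for the double cover (Hausmann's list $\{1,4,16\}$ for $D=16$, genuine rank-jump components of discriminant $2D=32$ from $F_{16}$, the forbidden curves recovered from the denominators of \eqref{igcl16} and the numerator of $I_{10}$ --- where $s^2-2rs+2r$ indeed appears --- and the twist fixed by matching Frobenius data at several primes), and rationality via a conic-bundle structure with a section, which the paper realizes concretely by first noting the right-hand side is quartic in $r$ (so the surface is a rational elliptic surface over $\Proj^1_s$) and then performing the explicit substitutions in $(e,f)$ that yield $w^2 = (108e^3+160e^2-144e)f^2+(-54e^2-88e+8)f+4$ with the visible section $(f,w)=(0,2)$ --- exactly your ``pencil meeting the branch divisor in two points.'' The one caveat is your attribution of the factor $s^2-2rs+2r$ specifically to $F_4$: the paper's special-loci list identifies this curve with the product locus, which is naturally $F_1$, and the paper never adjudicates which forbidden curve is which; but this is immaterial to the argument, since $F_1$ and $F_4$ are treated identically (their factors are simply added to the candidate list before the arithmetic twist is pinned down).
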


The right hand side is quartic in $r$, and we easily calculate that
the surface is a rational elliptic surface with section. To find a
rational parametrization, we make some elementary transformations to
reduce the degree of the equation in $r$ and $s$. The net effect of
these is to set
\[
r = \frac{-1}{e(e+2)^2f(2ef-1)}, \qquad 
s = \frac{1}{e(e+2)f}, \qquad 
z = \frac{w}{e^3(e+2)^5f^4(2ef-1)^2}
\]
The double cover is transformed to 
\[
w^2 = (108e^3+160e^2-144e)f^2+(-54e^2-88e+8)f+4,
\]
which is a conic bundle over the $e$-line with an obvious section
$(f,w) = (0,2)$. Parametrizing with a new parameter $d$, we obtain
finally
\begin{align*}
r &= -(108e^3+160e^2-144e-d^2)^2/\big(2e(e+2)^2(16e^2+8de+128e+d^2)(27e^2+44e+2d-4)\big), \\
s &= (108e^3+160e^2-144e-d^2)/\big(2e(e+2)(27e^2+44e+2d-4)\big).
\end{align*}
where we have omitted $z$ for simplicity.

\subsection{Tautological curve}

Once again, it is possible to write down the tautological family of
genus $2$ curves over the Hilbert modular surface. However, the
expression is quite long, and so we will omit it in the main body of
the paper. However, it can be obtained from the auxiliary computer
files. Similarly, for the higher discriminants treated in this paper,
we will omit the expression for the tautological family of genus $2$
curves (these are also in the computer files).

\subsection{Elliptic curves}

As before, it is possible to compute the equations of the two elliptic
curves attached to the $(4,4)$-decomposable Jacobian, and the
morphisms from the tautological genus $2$ curve to the elliptic
curves. We omit this calculation, and simply display the elementary
symmetric functions in the $j$-invariants of the two elliptic curves.

\begin{align*}
j_1 + j_2 &= 128(4s^{11}-8rs^{10}-63s^{10}+200rs^9+184s^9-128r^2s^8-1144rs^8 \\
 & \qquad \quad +1792r^2s^7+1392rs^7-768r^3s^6-5372r^2s^6+5888r^3s^5 \\
 & \qquad \quad +3936r^2s^5-2048r^4s^4-9840r^3s^4+7168r^4s^3+4928r^3s^3 \\
 & \qquad \quad -2048r^5s^2-6464r^4s^2+2048r^5s+2304r^4s-256r^5)/(s^2+2r)^4, \\
j_1 j_2 &= 4096(s^4+24s^3-56rs^2+48rs+16r^2)^3/(s^2+2r)^4. 
\end{align*}

Over the Hilbert modular surface, which is rational in parameters
$d,e$, we can solve the quadratic equation to get the two
$j$-invariants.
\begin{align*}
j_1 &= 16(1080e^3+108de^2-976e^2-32de-96e-d^2)^3/\big(e^2(16e^2+8de+128e+d^2)(27e^2+44e+2d-4)^3\big), \\
j_2 &= 16(24e^3-12de^2-16e^2-16de-96e-d^2)^3/\big(e^2(e+2)^4(16e^2+8de+128e+d^2)(27e^2+44e+2d-4)\big).
\end{align*}

\subsection{Comparison with previous work}

We compare our formulas with those of Bolza \cite[pp. 477, 480]{Kr},
since those of Bruin and Doerksen are equivalent \cite[Appendix
  A]{BD}. Bolza gives a family of genus $2$ curves with
$(4,4)$-reducible Jacobians over a weighted projective space with
$\lambda,\mu,\nu$ of weights $1,2,3$ respectively. By comparing the
Igusa--Clebsch invariants, we obtain the following relation between
those coordinates and ours:
\[
\frac{\mu}{\lambda^2} = \frac{(4e+d+16)(12e+3d-16)}{3(16e^2+8de+128e+d^2)} , \qquad 
\frac{\nu}{\lambda^3} = \frac{(4e+d+16)^2}{(16e^2+8de+128e+d^2)}.
\]
with inverse
\[
d = \frac{8(7\nu^2+18\lambda\mu\nu-16\lambda^3\nu-9\lambda^2\mu^2)}{9(\nu-\lambda\mu)^2}, \qquad 
e = \frac{-2(\nu^2+6\lambda\mu\nu-16\lambda^3\nu+9\lambda^2\mu^2)}{9(\nu-\lambda\mu)^2}.
\]

\subsection{Special loci}
\begin{enumerate}
\item The curves $r=0, z = 4s^2-54s$ and $r = s^2/(2s-2), z = 0$
  belong to the product locus. The latter is also part of the branch
  locus for the double cover $\widetilde{\sL}_4 \to \sL_4$.
\item The curve $2s^3+4rs-r^2 = 0$ on $\sL$ has genus $0$, and is
  parametrized by $(r,s) = \big(t^2(t-4)/2, t(t-4)/2 \big)$. Its lift
  is also a rational curve, parametrized by letting $t =
  -(v^2+171)/\big(4(v-5)\big)$. It is a modular curve, corresponding
  to the elliptic curves being $7$-isogenous.
\item The curve $s = 0$ is part of the branch locus.
\item The curve $s^5-rs^4-s^4+2rs^3-4rs^2-4r^2 = 0$ on the Humbert
  surface has genus $0$, and is parametrized by $(r,s) = \Big(
  (t^2+2t-2)^2/\big(t^4(t-1)\big),
  (t^2+2t-2)/\big(t(t-1)\big)\Big)$. Its lift is also rational,
  parametrized by letting $t = u(1-2u)/(u^2+2)$. This modular curve
  corresponds to the elliptic curves being $3$-isogenous.
\item The curve
  $2s^5-4rs^4-27s^4+76rs^3-32r^2s^2-108rs^2+144r^2s-64r^3-108r^2 =0$
  is part of the branch locus. It has genus $0$, and is parametrized by 
\[
(r,s) = \left( \frac{27(t-1)(3t^2+10t-1)^2}{8(3t-1)^4} , \frac{9(t+1)(3t^2+10t-1) }{(3t-1)^2}  \right).
\]
\item There are also several simple non-modular curves: for instance,
  the specializations $s = 9$ and $s = 63/8$ gives genus $0$ curves
  with rational points.
\end{enumerate}

\section{Discriminant $25$}

\subsection{Parametrization}

Next, we go on to discriminant $25$, which was studied in
\cite{MSV}. We start with a family of elliptic K3 surfaces with bad
fibers of type $D_9$ and $A_6$, and a section of height $25/28 = 4 -
9/4 - 6/7$. A Weierstrass equation for such a K3 surface has the form
\[
y^2 = x^3 + (a_0 + a_1 t + a_2 t^2 + a_3 t^3) x^2 + 2t(b_0 + b_1 t + b_2 t^2) x + (c_0 + c_1t)^2 t^2.
\]
This has a $D_9$ fiber at $t = \infty$ and an $A_1$ fiber at $t =
0$. We have shifted $x$ so that the generator of the Mordell--Weil
group has $x$-coordinate $0$. We need five more orders of vanishing
for the discriminant at $t = 0$. Since the components of the $A_6$
fiber are defined over the ground field, $a_0$ is a square; we may
normalize it to be $1$. This takes care of the scaling in $x$. We then
proceed to successively set the coefficients of $t^2$ through $t^5$ in
the expression for the discriminant to be zero, which gives us the
conditions
\begin{align*}
c_0 &= b_0, \\
c_1 &= (2b_1+b_0^2-a_1b_0)/2, \\
b_2 &= -\big((8b_0-4a_1)b_1+8b_0^3-10a_1b_0^2+(3a_1^2-4a_2)b_0\big)/8, \\
a_3 &= (2b_1+3b_0^2-2a_1b_0)(4b_1+10b_0^2-8a_1b_0-4a_2+a_1^2)/(8b_0).
\end{align*}
To simplify the succeeding expressions, we define a new variable $k$
by $b_1 = k + a_1 b_0/2$. The condition for the final order of
vanishing gives us a quadratic equation in $a_2$, whose discriminant
is a square times $4k^2-5b_0^4+2a_1b_0^3$. Therefore, setting
\[
a_1 = (h^2-4k^2+5b_0^4)/(2b_0^3)
\]
makes the expression a square, and we solve for $a_2$, obtaining
\begin{align*}
a_2 &= (16 k^4-32 b_0^2 k^3-8 h^2 k^2+16 b_0^2 h k^2-24 b_0^4 k^2+8 b_0^2 h^2 k \\
  & \qquad +16 b_0^4 h k+8 b_0^6 k+h^4-4 b_0^2 h^3-2 b_0^4 h^2+4 b_0^6 h+5 b_0^8)/(16 b_0^6).
\end{align*}
We also scale $b_0 = 1$ to quotient by the scaling in $t$, and let
\[
h = s-r, \qquad k = (r+s-1)/2
\]
to simplify the expressions; this change of variable is suggested by
the final formulas below. We obtain the final Weierstrass equation
\begin{align*}
y^2 &= x^3 + \big(r^2(2r-1)s^2t^3+(4r^2s^2-4rs^2+s^2-12r^2s+10rs+r^2)t^2/4-(2rs-s-r-2)t + 1\big) x^2 \\
&\qquad -  t\big( (2rs^2-s^2+6r^2s-6rs-r^2)t^2/2 + (2rs-2s-2r-1)t - 2\big)x + t^2(st+rt+2)^2/4.
\end{align*}
As noted above, the section of height $25/28$ has $x$-coordinate $0$.

\subsection{Map to $\A_2$ and equation of $\widetilde{\sL}_5$}

We then proceed to an elliptic fibration with $E_8$ and $A_7$ fibers,
via a $2$-neighbor step. The elliptic parameter is simply $x$.

\begin{center}
\begin{tikzpicture}

\draw (-2,0)--(7,0);
\draw (-1,0)--(-1,1);
\draw (4,0)--(4,1);
\draw (7,0)--(7.5,0.866)--(9.5,0.866)--(9.5,-0.866)--(7.5,-0.866)--(7,0);

\draw (3,2) to [bend right] (-1,1);
\draw (3,2) to [bend left] (7.5,0.866);
\draw [very thick] (-1,0)--(6,0);
\draw [very thick] (4,0)--(4,1);

\draw (6,0) circle (0.2);
\fill [white] (-2,0) circle (0.1);
\fill [black] (-1,0) circle (0.1);
\fill [black] (0,0) circle (0.1);
\fill [black] (1,0) circle (0.1);
\fill [black] (2,0) circle (0.1);
\fill [black] (3,0) circle (0.1);
\fill [black] (4,0) circle (0.1);
\fill [black] (5,0) circle (0.1);
\fill [black] (6,0) circle (0.1);
\fill [white] (7,0) circle (0.1);
\fill [white] (7.5,0.866) circle (0.1);
\fill [white] (8.5,0.866) circle (0.1);
\fill [white] (9.5,0.866) circle (0.1);
\fill [white] (7.5,-0.866) circle (0.1);
\fill [white] (8.5,-0.866) circle (0.1);
\fill [white] (9.5,-0.866) circle (0.1);
\fill [white] (-1,1) circle (0.1);
\fill [black] (4,1) circle (0.1);
\fill [white] (3,2) circle (0.1);

\draw (-2,0) circle (0.1);
\draw (-1,0) circle (0.1);
\draw (0,0) circle (0.1);
\draw (1,0) circle (0.1);
\draw (2,0) circle (0.1);
\draw (3,0) circle (0.1);
\draw (4,0) circle (0.1);
\draw (5,0) circle (0.1);
\draw (6,0) circle (0.1);
\draw (7,0) circle (0.1);
\draw (7.5,0.866) circle (0.1);
\draw (8.5,0.866) circle (0.1);
\draw (9.5,0.866) circle (0.1);
\draw (7.5,-0.866) circle (0.1);
\draw (8.5,-0.866) circle (0.1);
\draw (9.5,-0.866) circle (0.1);
\draw (-1,1) circle (0.1);
\draw (4,1) circle (0.1);
\draw (3,2) circle (0.1);

\end{tikzpicture}
\end{center}

After some easy Weierstrass transformations and a fractional linear
transformation of the elliptic parameter, the Weierstrass equation for
the new fibration is
\begin{align*}
y^2 &= x^3 + \big( 12 r^3 s t^3 + (4 r^2 s^2-4 r s^2+s^2+12 r^2 s-14 r s+r^2) t^2 -4(2 r s+s+r-1) t + 4 \big) x^2/16  \\
  &\qquad  + t^4 \big( 6 r^6 s^2 t^2 + r^3 s (4 r^2 s^2-4 r s^2+s^2+12 r^2 s-14 r s+r^2) t  \\
& \qquad \qquad -2r s (4 r^2 s^2-4 r s^2+s^2+2 r^3 s+r^2 s+r^3-r^2) \big) x/32  \\
   & \qquad + t^8 \big( 4r^9 s^3  t + r^6 s^2 (4 r^2 s^2-4 r s^2+s^2+12 r^2 s-14 r s+r^2) \big)/256.
\end{align*}

There is a section $P$ of height $25/8 = 4 - 7/8$.  Finally, we take a
$2$-neighbor step to an elliptic fibration with $E_8$ and $E_7$
fibers.

\begin{center}
\begin{tikzpicture}

\draw (-2,0)--(7,0)--(7.5,0.866)--(7.5,0.866)--(9.5,0.866)--(10,0)--(9.5,-0.866)--(7.5,-0.866)--(7,0);
\draw (0,0)--(0,1);
\draw [very thick] (9.5, 0.866)--(7.5,0.866)--(7,0)--(7.5,-0.866)--(9.5,-0.866);
\draw [very thick] (6,0)--(7,0);

\draw (6,0) circle (0.2);
\draw (6,1.5) to [bend right] (5,0);
\draw (6,1.5) to [bend left] (7.5,0.866);

\fill [white] (-2,0) circle (0.1);
\fill [white] (-1,0) circle (0.1);
\fill [white] (0,0) circle (0.1);
\fill [white] (1,0) circle (0.1);
\fill [white] (2,0) circle (0.1);
\fill [white] (3,0) circle (0.1);
\fill [white] (4,0) circle (0.1);
\fill [white] (5,0) circle (0.1);
\fill [black] (6,0) circle (0.1);
\fill [white] (0,1) circle (0.1);
\fill [black] (7,0) circle (0.1);
\fill [black] (7.5,0.866) circle (0.1);
\fill [black] (7.5,-0.866) circle (0.1);
\fill [black] (8.5,0.866) circle (0.1);
\fill [black] (8.5,-0.866) circle (0.1);
\fill [black] (9.5,0.866) circle (0.1);
\fill [black] (9.5,-0.866) circle (0.1);
\fill [white] (10,0) circle (0.1);
\fill [white] (6,1.5) circle (0.1);

\draw (-2,0) circle (0.1);
\draw (-1,0) circle (0.1);
\draw (0,0) circle (0.1);
\draw (1,0) circle (0.1);
\draw (2,0) circle (0.1);
\draw (3,0) circle (0.1);
\draw (4,0) circle (0.1);
\draw (5,0) circle (0.1);
\draw (6,0) circle (0.1);
\draw (0,1) circle (0.1);
\draw (7,0) circle (0.1);
\draw (7.5,0.866) circle (0.1);
\draw (7.5,-0.866) circle (0.1);
\draw (8.5,0.866) circle (0.1);
\draw (8.5,-0.866) circle (0.1);
\draw (9.5,0.866) circle (0.1);
\draw (9.5,-0.866) circle (0.1);
\draw (10,0) circle (0.1);
\draw (6,1.5) circle (0.1);

\end{tikzpicture}
\end{center}

The elliptic parameter is $x/t^4$. If $F$ is the class of the
$E_7$ fiber, then $P \cdot F = 3$, while $F \cdot C = 2$, where $C$ is
the remaining component of the $A_7$ fiber not included in $F$. Since
these numbers are coprime, the genus $1$ fibration defined by $F$ has
a section. We therefore convert to the Jacobian, and read out the
Igusa--Clebsch invariants, which are as follows.

\begin{equation} \label{igcl25}
\begin{aligned}
I_2 &= 2 (4 r^2 s^2+20 r s^2+s^2-12 r^2 s+22 r s-6 s+r^2-6 r+3), \\
I_4 &= (16 r^4 s^4-32 r^3 s^4+24 r^2 s^4-8 r s^4+s^4+96 r^4 s^3-976 r^3 s^3 +904 r^2 s^3\\
& \qquad     -220 r s^3+56 r^4 s^2-392 r^3 s^2+198 r^2 s^2-24 r^4 s +20 r^3 s+r^4)/4, \\
I_6 &= (64 r^6 s^6+320 r^5 s^6-784 r^4 s^6+608 r^3 s^6-196 r^2 s^6+20 r s^6+s^6 +64 r^6 s^5 \\
& \qquad    -4128 r^5 s^5-17120 r^4 s^5+18544 r^3 s^5-3692 r^2 s^5 -378 r s^5-8 s^5\\
& \qquad      -1616 r^6 s^4+10304 r^5 s^4-35208 r^4 s^4  +30368 r^3 s^4-10769 r^2 s^4\\
& \qquad        +1720 r s^4+4 s^4-896 r^6 s^3 +6192 r^5 s^3-7152 r^4 s^3-1732 r^3 s^3\\
& \qquad     +3792 r^2 s^3-880 r s^3 +364 r^6 s^2-2228 r^5 s^2+4543 r^4 s^2-3312 r^3 s^2\\
& \qquad      +792 r^2 s^2 -36 r^6 s+238 r^5 s-264 r^4 s+80 r^3 s+r^6-8 r^5+4 r^4)/8 ,\\
I_{10} &= -16 r^7 (2 r-1)^2 s^5 .
\end{aligned}
\end{equation}

Computing the double cover, we obtain the following description of the
moduli space.
\begin{theorem}
A birational model for the surface $\widetilde{\sL}_5$ (equivalently,
for $Y_{-}(25)$) is given by
\[
z^2 = (4s^2+22s-1)^2  r^2 -2s(8s^3-88s^2+72s-17) r + s^2(2s-1)^2.
\]
It is a rational surface. The Humbert surface is birational to the
$(r,s)$-plane. In these coordinates, the Igusa--Clebsch invariants of a
point on the moduli space are given by the formulas in \eqref{igcl25}
above.
\end{theorem}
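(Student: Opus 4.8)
The plan is to follow the four-step recipe of Section~\ref{setup}, of which the first three steps have already been carried out above: the parametrization by the $D_9, A_6$ family and the two successive $2$-neighbor steps to the $E_8 E_7$ fibration realize $\M_{L_{25}}$ and produce the Igusa--Clebsch invariants \eqref{igcl25}. By Theorem~\ref{humbert}, the resulting map $\M_{L_{25}} \to \A_2$ is a birational surjective morphism onto the Humbert surface $\sH_{25}$. This already proves that the $(r,s)$-plane is birational to $\sH_{25}$ and that the Igusa--Clebsch invariants are as asserted, so everything reduces to identifying the correct double cover $Y_{-}(25) \to \sH_{25}$, i.e.\ the quadratic extension of $\Q(r,s)$ given by some equation $z^2 = Q(r,s)$.

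First I would determine the geometric branch locus. Since $D = 25$ is odd, the proposition of Hausmann identifies the one-dimensional fixed locus of $\sigma$ with the union $F_1 \cup F_{25}$. By the proposition on modular curves, the component $F_{25}$ consists of points where the K3 N\'eron--Severi group jumps to rank $19$ with discriminant $2D = 50$, whereas $F_1$ would force discriminant $2$; by the proposition ruling this out for a $\II^*\,\III^*$ fibration, $F_1$ instead appears as the degeneration locus where that fibration fails to exist. Concretely, I would list the candidate factors for $Q$ by combining the irreducible factors coming from the ways the Picard rank can jump to give N\'eron--Severi discriminant $50$, together with the numerator of $I_{10} = -16 r^7 (2r-1)^2 s^5$, which records the product and degeneration loci; this is exactly the procedure applied for discriminant~$4$.

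The decisive and hardest step is the arithmetic verification that singles out the correct product of candidate factors and the correct global quadratic twist. For this I would reduce the K3 family modulo several primes $p$ of good reduction, and for many rational specializations $(r_0, s_0) \in \F_p^2$ count points on the reduced surface to obtain the characteristic polynomial of Frobenius on its second cohomology. On the abelian side, the corresponding genus~$2$ curve is recovered from \eqref{igcl25}, and real multiplication by $\sO_{25}$ constrains the Frobenius eigenvalues on $J(C)$; whether the extra endomorphism (equivalently the eigenform action) is defined over $\F_p$ rather than a quadratic extension is exactly detected by whether the candidate $Q(r_0,s_0)$ is a square in $\F_p$. Matching these two computations over enough primes and specializations, as in Section~$4$ of \cite{EK}, isolates $Q$ up to squares and fixes the twist, yielding the stated quadratic in~$r$.

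Finally, rationality follows at once. The equation $z^2 = Q(r,s)$ with $Q$ quadratic in $r$ is a conic bundle over $\Proj^1_s$, and since its leading coefficient $(4s^2+22s-1)^2$ and its constant term $s^2(2s-1)^2$ are both perfect squares, the point $(r,z) = (0,\, s(2s-1))$ is a section defined over $\Q$. A conic bundle over $\Proj^1$ with a rational section is a rational surface over $\Q$, and one exhibits the birational parametrization by completing the square and solving for $r$, exactly as in the lower-discriminant cases.
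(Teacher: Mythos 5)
Your proposal is correct and follows essentially the same route as the paper: the paper likewise reads \eqref{igcl25} off the two $2$-neighbor steps, identifies the branch locus via Hausmann's curves $F_1$ and $F_{25}$ (the $F_{25}$ components via the rank-$19$, discriminant-$50$ jump, the $F_1$ locus via degeneration of the $\II^*\,\III^*$ fibration), fixes the twist by the Frobenius point-counting comparison of \cite{EK}, and then exhibits rationality by completing the square with $z = rm + s(2s-1)$, i.e.\ precisely your section at $r = 0$. The only small refinement is that the paper's recipe draws the degeneration-locus candidate factors not just from the numerator of $I_{10}$ but also from the denominators of the intermediate Weierstrass coefficients and of the Igusa--Clebsch invariants; with that enlargement of the candidate list, your argument is the paper's.
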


Once again, this is a conic bundle with a section. Setting $z = rm +
s(2s-1)$ and solving for $r$, we obtain
\[
r = -\frac{2 s (2 s m-m+8 s^3-88 s^2+72 s-17)}{(m-4 s^2-22 s+1) (m+4 s^2+22 s-1)}.
\]

\subsection{Comparison with previous work}

Let us compare our results with those of \cite{MSV}. There it is
asserted that the moduli space $\sL_5$ of genus $2$ curves with a
degree $5$ elliptic subfield (which is birational to the Humbert
surface $\sH_{25}$) is birational to the hypersurface cut out by the
following equation in $\mathbb{A}^3_{u,v,w}$:
\begin{align*}
0 &= 64 v^2 (u-4 v+1)^2 w^2 + 4 v (128 v^4-16 u^2 v^3+288 u v^3-2592 v^3-24 u^3 v^2+96 u^2 v^2+272 u v^2+4672 v^2 \\
& \qquad   +15 u^4 v-92 u^3 v+20 u^2 v-576 u v-2 u^5+12 u^4-4 u^3) w + (u^2+4 v u+4 v^2-48 v)^3.
\end{align*}
In fact, this is a rational surface: the discriminant with respect to $w$ is
\[
-(v+2 u-16) (16 v^2-u^2 v+36 u v+108 v-2 u^3)
\]
and the invertible change of variables 
\[
u = (v_1 + 25) h - 18, \qquad v = -2(v_1 + 25) h + (v_1 + 52)
\]
converts it to $(h^2 v_1+24 h^2-8 h-16)/v_1$, up to squares. Setting
this quantity equal to $g^2$ and solving for $v_1$, we obtain the full
parametrization
\begin{align*}
u &= (h^3-10 h^2-25 g^2 h+16 h+18 g^2)/(h^2-g^2), \\
v &= -2 (h^3-6 h^2-25 g^2 h+12 h+26 g^2-8)/(h^2-g^2), \\
w &= \frac{(h-5 g+4) (h+5 g+4)^2 (3 h^2-15 g h-10 h+14 g+8)^3}{16 (h-1) (h-g)^2 (3 h-15 g-8)^2 (h^3-6 h^2-25 g^2 h+12 h+26 g^2-8)}.
\end{align*}

\begin{remark}
  Unfortunately, there is a mistake\footnote{We take this opportunity
    to point out another typo: the last term for $a_0$ in equation
    (10) should be $12za$ rather than $12ya$.} in the paper
  \cite{MSV}: the moduli space described there, claimed to be
  birational to $\sL_5$ in Theorem $3$ of their paper, is actually
  birational to the Hilbert modular surface (i.e.~to
  $\widetilde{\sL}_5$), and the Humbert surface is a quotient of it by
  an involution which we give explicitly in the auxiliary files (we
  check directly that the absolute Igusa--Clebsch invariants are not
  changed by applying this involution).
\end{remark}

By comparing the Igusa--Clebsch invariants, we obtain the relation
between these coordinates and ours:
\begin{align*}
s &=(h+g) (3 h-15 g-8)/\big(4 (h^2-3 g h-10 g^2-2 g+4)\big), \\
m &= (73 h^5-571 g h^4-208 h^4+310 g^2 h^3+644 g h^3+312 h^3+3350 g^3 h^2+2220 g^2 h^2-856 g h^2 \\
& \qquad  -768 h^2+625 g^4 h-1700 g^3 h-2120 g^2 h+64 g h +192 h+3125 g^5+2500 g^4-5400 g^3\\
& \qquad  -4160 g^2+1344 g+1024 )/\big(4 (h-g) (h^2-3 g h-10 g^2-2 g+4)^2\big)
\end{align*}
with inverse
\begin{align*}
g &= \frac{s m^2+8 s^3 m-68 s^2 m+2 s m+8 m+16 s^5-272 s^4+1228 s^3-212 s^2-127 s -8}{(m+4 s^2-28 s-1) (2 s m-m+8 s^3-88 s^2+72 s-17)}, \\
h &= \frac{s m^2+8 s^3 m-124 s^2 m+74 s m-8 m+16 s^5-496 s^4+3484 s^3-3036 s^2 +649 s+8}{(m+4 s^2-28 s-1) (2 s m-m+8 s^3-88 s^2+72 s-17)}.
\end{align*}

\subsection{Elliptic curves}

The $j$-invariants of the associated elliptic curves are given by
\begin{align*}
j_1 &=  -(m^4+16 s^2 m^3-8 s m^3+20 m^3+96 s^4 m^2-8288 s^3 m^2+10632 s^2 m^2 \\
 & \qquad -5304 s m^2+1014 m^2+256 s^6 m-65920 s^5 m+116864 s^4 m-104416 s^3 m \\
 &  \qquad  +49824 s^2 m-24792 s m+5684 m+256 s^8-131584 s^7+4492928 s^6 \\
 &  \qquad  -5084160 s^5+1924400 s^4-243456 s^3-87256 s^2+14552 s+4945)^3 \\
& \qquad  /\big(4 (m+4 s^2+22 s-1)^5 (2 s m-m+8 s^3-88 s^2+72 s-17)^3 \\
& \qquad \quad  (m^2+8 s^2 m-4 s m+16 s^4-528 s^3-188 s^2-24 s-1) \big), \\
j_2 &= -(m^4+16 s^2 m^3-88 s m^3+20 m^3+96 s^4 m^2-1568 s^3 m^2+2632 s^2 m^2 \\
&  \qquad    -1064 s m^2+54 m^2+256 s^6 m-8320 s^5 m+57984 s^4 m-33696 s^3 m \\
& \qquad   +6944 s^2 m+6328 s m-1996 m+256 s^8-13824 s^7+207488 s^6-916480 s^5 \\
& \qquad   +510000 s^4-115456 s^3-53016 s^2+19592 s+2065)^3 \\
& \qquad   /\big(8 (m-4 s^2-22 s+1)^5 (2 s m-m+8 s^3-88 s^2+72 s-17)^2 \\
 & \qquad \quad (m^2+8 s^2 m-4 s m+16 s^4-528 s^3-188 s^2-24 s-1)\big) .
\end{align*}

Let us check that the $j$-invariant agrees with that implicit in
\cite{MSV}. It follows from the description there that $C$ maps to the
elliptic curve
\[
y^2 = x (x - 1) (x - \lambda)
\]
where $\lambda = \zeta F_1(\zeta)^2/F_2(\zeta)^2$ where $F_1$ and $F_2$ are the polynomials
\begin{align*}
F_1(T) &= T^2 + (2a + 2b + a^2)T + 2ab + b^2 \\
F_2(T) &= (2a + 1)T^2 + (a^2 + 2ab + 2b) T + b^2,
\end{align*}
$\zeta$ is a root of 
\[
(2a+1) \zeta^2 + (2b - 2ab - 2a - a^2)\zeta + b^2 + 2ab = 0
\]
and $u,v$ are related to the parameters $a,b$ by 
\[
u = \frac{2a(ab + b^2 + b +a + 1)}{b(a+b + 1)}, \quad v = \frac{a^3}{b(a+b+1)}.
\]
After some computation, the value of the $j$-invariant is found to
agree with $j_1$.

\subsection{Special loci}
We again list several interesting curves on the surface.
\begin{enumerate}
\item The curves $r = 0, z = \pm s(2s-1)$, $s =0, z = \pm r$ are
  rational curves corresponding to $I_{10} = 0$, as is the lift of $r
  = 1/2$.
\item The curve $4r^2s^2-4rs^2+s^2+6r^2s-2rs-2r^3+r^2 = 0$ on the
  Humbert surface, parametrized by $(r,s) = \big( (t^2-2t+4)/8,
  (t-2)(t^2-2t+4)/(4t^2) \big)$, lifts to an elliptic curve on
  $Y_{-}(25)$. It parametrizes the locus for which the two associated
  elliptic curves are related by a $14$-isogeny. Abstractly, this
  elliptic curve is isomorphic to $y^2 + xy + y = x^3 + 4x - 6$ of
  conductor $14$, which has rank $0$.
\item The curve $(2r - 1)^2s+r = 0$ lifts to an elliptic curve on
  $Y_{-}(25)$, isomorphic to the elliptic curve $y^2 + y = x^3 - x^2 -
  10x - 20$ of conductor $11$ and rank $0$. It parametrizes the locus
  of reducible Jacobians for which the two associated elliptic curves
  have an $11$-isogeny.
\item The curve 
\[
4(2r - 1)^2 s^4+ 4(44r^2+44r-1) s^3+(476r^2-144r+1)s^2+(-44r^2+34r)s+r^2 = 0
\]
of genus $0$ is part of the branch locus, and is parametrized by
$$(r,s) = \big( (t+1)^2(2t^2+4t+1)/(2(t^2+t-1)^2), -(2t^2+4t+1)/(2t^2)
\big).$$ It corresponds to the two elliptic curves being isomorphic.
\item The curve parametrized by
\[
r = \frac{1}{50t}, \quad s = \frac{(9t-1)(25t-1)}{16(25t^2+6t+1)}, \quad
m = \frac{25(37375 t^4+7044 t^3+762 t^2-124 t-1)}{64(25t^2+6t+1)^2}
\]
corresponds to the degenerate Case I of \cite{MSV}.
\item The curve parametrized by 
\[
r = \frac{-3(25t-1)}{2(25t-16) }, \quad
s =  \frac{1-t}{2}, \quad
m = \frac{15625t^3+3750t^2-7800t+89}{(t-1)(25t-16)^2}
\]
corresponds to the degenerate Case II of \cite{MSV}.
\item The curve parametrized by 
\[
r = \frac{1}{2(t-7)},\quad
s = \frac{3}{8}, \quad
m = -\frac{125}{16}
\]
corresponds to the degenerate case III of \cite{MSV}.
\end{enumerate}

\section{Discriminant $36$}

\subsection{Parametrization}

Next, we describe the moduli space for degree $6$ covers. The
corresponding Humbert surface is birational to the moduli space of
elliptic K3 surfaces with $E_8$, $A_5$ and $A_2$ fibers, with a
section of height $2 = 4 - 2/3 - 2 \cdot 4/6$. We briefly describe how
to parametrize this moduli space and the family of K3 surfaces over
it.

Start with an elliptic surface
\[
y^2 = x^3 + ax^2 + 2bt^2(t-1) x + ct^4(t-1)^2
\]
where $a,b,c$ are polynomials of degrees $2,1,1$ respectively. This
surface already has an $E_8$ fiber at $\infty$, an $A_3$ fiber at $0$
and an $A_1$ fiber at $t = 1$. Since the components of the eventual
$A_5$ fiber at $t = 0$ are rational, the constant term of $a$ is a
square, and we may scale $x$ and $y$ so that $a(0) = 1$. Similarly,
$a(1)$ is rational. So we may write 
\[
a = 1 - t + t e^2 + a_1 t(1-t).
\]
Imposing two more orders of vanishing at $t = 0$, we may solve linear
equations for $c_0$ and $c_1$. We need one more order of vanishing of
the discriminant at $t = 1$. This gives a quadratic equation in $b_1$
whose discriminant is $-a_1$ times a square. We set $a_1 = -f^2$ and
solve for $b_1$. Finally, we need a section with the requisite
height. It must have $x$-coordinate $t^2 (t-1)$ times a linear factor,
which we may normalize as $x_0 + 2b_0 + g^2 t$. Substituting in to the
Weierstrass cubic polynomial and completing the square, we can compute
the $y$-coordinate. This leads to three equations involving the
remaining variables. We eliminate $x_0$ and $b_0$ to get a single
equation connecting the remaining variables $e,f,g$. This equation is
cubic in $f$, but subtituting $e = e' - f$ and $g = g'f$ makes it
quadratic. Setting the discriminant equal to a square, after some easy
algebra we end up with a parametrization by two variables $r$ and
$s$. The universal Weierstrass equation is
\begin{align*}
y^2 &= x^3 + \Big(9(r+6)^2s^2(2s+r)^2/4t^2 -s(36s-r^2)(18rs^2+108s^2+9r^2s \\
 & \qquad \qquad +36rs+108s+r^3+9r^2+54r)/3t + (s-1)^2(36s-r^2)^2 \Big)x^2 \\
& \qquad + r(r+6)^2s^3(6s+r)t^2(rt-12) \cdot \Big(s(18rs^2+108s^2+9r^2s \\
& \qquad \qquad  +108s+2r^3+9r^2+54r)t-6(s-1)^2(36s-r^2) \Big)x \\
& \qquad + 3r^2(r+6)^4s^6(6s+r)^2t^4(rt-12)^2(rst+3s^2-6s+3).
\end{align*}
Here we have introduced appropriate scalings of $x,y,t$ to eliminate
denominators. The $x$-coordinate of the section of height $2$ is given by
\[
x_s = 3rs^3t^2(rt-12)(3st-6s-r+6).
\]

\subsection{Map to $\A_2$ and equation of $\widetilde{\sL}_6$}

The $3$-neighbor step corresponding to the figure below takes us to an
$E_8 E_7$ fibration.

\begin{center}
\begin{tikzpicture}

\draw (0,0)--(9,0)--(9.5,0.866)--(10.5,0.866)--(11,0)--(10.5,-0.866)--(9.5,-0.866)--(9,0);
\draw (8,0)--(8,1)--(8.5,1.866)--(7.5,1.866)--(8,1);
\draw (2,0)--(2,1);
\draw (7,3)--(7.5,1.866);
\draw (7,3) to [bend right] (7,0); 
\draw (7,3) to [bend left] (10.5,0.866); 
\draw [very thick] (7.5,1.866)--(8,1)--(8,0)--(9,0)--(9.5,-0.866)--(10.5,-0.866)--(11,0);
\draw [very thick] (9,0)--(9.5,0.866);

\fill [white] (0,0) circle (0.1);
\fill [white] (1,0) circle (0.1);
\fill [white] (2,0) circle (0.1);
\fill [white] (2,1) circle (0.1);
\fill [white] (3,0) circle (0.1);
\fill [white] (4,0) circle (0.1);
\fill [white] (5,0) circle (0.1);
\fill [white] (6,0) circle (0.1);
\fill [white] (7,0) circle (0.1);
\fill [black] (8,0) circle (0.1);
\fill [black] (8,1) circle (0.1);
\fill [white] (8.5,1.866) circle (0.1);
\fill [black] (7.5,1.866) circle (0.1);
\fill [black] (9,0) circle (0.1);
\fill [black] (9.5,0.866) circle (0.1);
\fill [black] (9.5,-0.866) circle (0.1);
\fill [white] (10.5,0.866) circle (0.1);
\fill [black] (10.5,-0.866) circle (0.1);
\fill [black] (11,0) circle (0.1);
\fill [white] (7,3) circle (0.1);

\draw (0,0) circle (0.1);
\draw (1,0) circle (0.1);
\draw (2,0) circle (0.1);
\draw (2,1) circle (0.1);
\draw (3,0) circle (0.1);
\draw (4,0) circle (0.1);
\draw (5,0) circle (0.1);
\draw (6,0) circle (0.1);
\draw (7,0) circle (0.1);
\draw (8,0) circle (0.1);
\draw (8,1) circle (0.1);
\draw (8.5,1.866) circle (0.1);
\draw (7.5,1.866) circle (0.1);
\draw (9,0) circle (0.1);
\draw (9.5,0.866) circle (0.1);
\draw (9.5,-0.866) circle (0.1);
\draw (10.5,0.866) circle (0.1);
\draw (10.5,-0.866) circle (0.1);
\draw (11,0) circle (0.1);
\draw (7,3) circle (0.1);

\end{tikzpicture}
\end{center}

We may now read out the Igusa--Clebsch invariants. They are fairly
lengthy to write down, so we will not display them here. Instead they
may be accessed in the auxiliary computer files. 
\begin{theorem}
A birational model for the surface $\widetilde{\sL}_6$ (equivalently,
for $Y_{-}(36)$) is given by
\begin{align*}
z^2 &= (36 s-r^2) \big( 1296 (r+6)^2 s^3-36 r (r+6) (r^2+162 r-648) s^2 + 12 r^2 (11 r^3+207 r^2+2592 r+8748) s \\
& \qquad \qquad \qquad + r^4 (7 r^2+108 r+972) \big).
\end{align*}
It is a singular elliptic K3 surface (i.e.~of maximal Picard number
$20$). The Humbert surface is birational to the $(r,s)$-plane.
\end{theorem}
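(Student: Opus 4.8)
The plan is to follow the four-step method of Section~\ref{setup}, specialized to $D = 36$. The parametrization already carried out realizes the Humbert surface $\sH_{36}$ birationally as the $(r,s)$-plane: the two-parameter family of elliptic K3 surfaces with fibers of type $E_8$, $A_5$ and $A_2$ and a section of height $2$ is exactly the coarse moduli space $\M_{L_{36}}$, and by Theorem~\ref{humbert} the induced morphism $\sF_{L_{36}} \to \sH_{36}$ is birational and surjective; this already gives the last sentence of the statement. Passing through the $E_8 E_7$ fibration produced by the indicated $3$-neighbor step and reading off the Igusa--Clebsch invariants (recorded in the auxiliary files) yields the explicit birational map $\sH_{36} \to \A_2$. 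The genuine content of the theorem is therefore the identification of the correct double cover $Y_{-}(36) \to \sH_{36}$ and the determination of its geometric type.

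For the double cover I would first locate the geometric branch locus. By Hausmann's proposition it is a union of the modular curves $F_w$ with $w \in \{1,4,9,36\}$, since $D = 36$ is even. The two ``honest'' components $F_{36}$ and $F_{9}$ are detected on the K3 side by the N\'eron--Severi rank jumping to $19$ with discriminant $2D = 72$ and $D/2 = 18$ respectively; concretely these are the loci in the $(r,s)$-plane where the $E_8 E_7$ surface acquires an extra reducible fiber or an extra section, and I would extract their defining polynomials from the factorizations of the relevant Weierstrass coefficients. The remaining curves $F_1$ and $F_4$ fall under the proposition ruling out $\NS$ of discriminant $2$ or $8$ for an $\II^*,\III^*$ fibration carrying a section of height $D/2$: along them the target fibration must degenerate, so their equations appear instead among the irreducible factors of the denominators of the Weierstrass coefficients, of the Igusa--Clebsch invariants, and of the numerator of $I_{10}$. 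Assembling all candidate factors and reducing modulo squares gives the radical of the branch divisor, which I would match against $(36s - r^2)$ times the displayed cubic in $s$. The correct arithmetic twist --- the global scalar and sign turning a bare square root of the branch polynomial into the true function field of $Y_{-}(36)$ --- is then pinned down exactly as in Step~$4$ of \cite{EK}, by reducing modulo several primes of good reduction and matching the characteristic polynomial of Frobenius on the transcendental lattice of the K3 side against the corresponding trace on the abelian-surface side.

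To establish the geometric type, I would exploit the elliptic fibration on $z^2 = f(r,s)$ given by projection to the $r$-line: since $(36s-r^2)$ times the cubic in $s$ is a quartic in $s$, a general fiber is a genus-one curve, and passing to its Jacobian produces a genuine elliptic surface over $\Proj^1_r$. Computing its discriminant and Kodaira fiber types, I would verify that the Euler number equals $24$ and that the surface is relatively minimal with trivial canonical class and vanishing irregularity, hence a K3 surface --- consistent with the classification of $Y_{n,\eps}$ in \cite{He, KS} for $n = 6$. To upgrade ``K3'' to ``singular K3'', note that a complex K3 always has $\rho \le 20$, so it suffices to exhibit $20$ independent classes in $\NS$; I would do this via the Shioda--Tate formula $\rho = 2 + \sum_v (m_v-1) + \mathrm{rank}\,\MW$, counting the contributions of the reducible fibers of this fibration together with the extra sections supplied by the modular and product-locus curves (found as in the Special loci analyses of the earlier discriminants). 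Meeting the bound forces $\rho = 20$ over $\Qbar$.

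The hard part will be the second step: correctly enumerating and separating the branch-locus contributions --- in particular recognizing the $F_1$, $F_4$ pieces hidden inside the denominators rather than among the Picard-jump loci --- and then fixing the arithmetic twist unambiguously. The Frobenius matching is delicate because one must track the precise normalization of the Shioda--Inose correspondence so that the K3 transcendental Frobenius and the abelian-surface Frobenius are compared on the nose; a wrong global constant would produce a quadratic twist that is geometrically but not arithmetically correct, leaving the stated equation off by a non-square scalar.
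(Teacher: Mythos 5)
Your proposal is correct and follows essentially the same route as the paper: the Section~\ref{setup} machinery (Hausmann's curves $F_1, F_4, F_9, F_{36}$, Picard-jump loci plus denominator factors, Frobenius matching for the twist) for the double cover, and the genus-one fibration over $\Proj^1_r$ with its Jacobian and Shioda--Tate to reach $\rho = 20$. The paper implements the last step by exhibiting two explicit sections $P, Q$ whose span with the trivial lattice has rank $20$ (and checks $2$- and $3$-saturation to pin down $\NS$ exactly, which goes slightly beyond what the theorem statement requires), but this is the same argument you sketch.
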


The equation above shows that $\widetilde{\sL}_6$ has a genus $1$
fibration over $\Proj^1_r$, with an obvious section. Therefore it is
an elliptic surface. Taking the corresponding Jacobian fibration and
after some simple transformations (including setting $r = 4t$), we
obtain the following Weierstrass equation, showing that it is a K3
surface.
\begin{align*}
y^2 &= x^3 + 3 (t^2-4 t-1) (3 t^2-4 t-3) x^2 \\
 & \qquad + 96 t^2 (t+1) (3 t+1) (t^2-6 t-3) x \\
 & \qquad -256 t^3 (3 t+1)^2 (7 t^3-30 t^2-33 t-8).
\end{align*}
It has reducible fibers of type $\I_3$ at $t = 0$ and at the roots of
$t^3 - 3t^2 - 3t -1$, $\I_4$ at $t = -1/3$ and at $t = \infty$, and
$\I_2$ at the roots of $t^2 - 6t - 3$. The trivial lattice therefore
has rank $2 + 3 + 2 + 6 + 3 + 2 = 18$, leaving room for Mordell--Weil
rank up to $2$. We find the sections
\begin{align*}
P &= \big( 8(3t+1)(t^2-2t-1), 8(3t+1)^2(t^3-3t^2-3t-1)  \big) \\
Q &= \big( -4t(t-1)(3t+1), 4t(3t+1)^2(t^2-6t-3) \big).
\end{align*}
The intersection matrix of these sections is 
\[
\frac{1}{12} \begin{pmatrix}
3 & 3 \\ 3 & 7
\end{pmatrix}
\]
It has determinant $1/12$. Hence the span of these two sections and the
trivial lattice has rank $20$ and discriminant $3^4 \cdot 4^2 \cdot
2^2 \cdot (1/12) = 2^4 \cdot 3^3$.  We check that it is $2$- and $3$-
saturated in the N\'eron--Severi lattice, and therefore it must be all
of $\NS(Y_{-}(36))$.

\subsection{Elliptic curves}

We compute the symmetric polynomials in the two $j$-invariants, in
terms of the parameters on the Humbert surface, and check that the
discriminant of the polynomial $T^2 - (j_1 + j_2)T + j_1 j_2$ is, up
to squares, the same as the branch locus for the Hilbert modular
surface. We obtain the following expressions.
\begin{align*}
j_1 + j_2 &=  -54\big(1119744(r+6)^5s^9 -31104r(r+6)^4(r^2+162r-1944)s^8 \\
& \qquad \quad + 31104r^2(r+6)^3(4r^3+51r^2-3888r+43740)s^7 \\
& \qquad \quad -7776r^3(r+6)^2(5r^4-726r^3+9540r^2+36936r-2099520)s^6 \\
& \qquad \quad -2592r^4(r+6)(31r^5-777r^4-756r^3+208008r^2-5458752r-42515280)s^5 \\
& \qquad \quad -216r^5(301r^6+1002r^5-32472r^4-1228608r^3-59731344r^2-629226144r-1836660096)s^4 \\
& \qquad \quad -216r^6(50r^6-8601r^5-256410r^4-3320352r^3-32122656r^2-195570288r-459165024)s^3 \\
& \qquad \quad -54r^8(5r+54)(143r^4+4248r^3+61776r^2+419904r+944784)s^2 \\
& \qquad \quad + 6r^{11}(5r+54)^2(11r+162)s+ r^{12}(5r+54)^3 \big)/\big(r^{11}(6s+r)^4\big), \\
j_1 j_2 &= 729 \big( 144 (r+6)^2 s^4 -288 r (r+6) (r+114) s^3 + 72 r^2 (11 r^2+108 r+540) s^2 \\
& \qquad \quad + 24 r^3 (r+18) (5 r+54) s + r^4 (5 r+54)^2 \big)^3/\big(r^{12} (6 s+r)^6\big). 
\end{align*}

\subsection{Special loci}
We next present some curves of low genus on the modular surface.
\begin{enumerate}
\item The curves $r = 0$ and $r = -6s$ lift to pairs of rational
  curves on $Y_{-}(36)$. They correspond to $I_{10} =0$.
\item The curve $s = r^2/36$ lies in the branch locus. It is also a
  component of the product locus (i.e.~the Jacobian degenerates to a
  product of elliptic curves).
\item The curve $r = -6$ also belongs to the product locus. It lifts
  to a genus $0$ curve without any real points.
\item The curve $s = 0$ lifts to a genus $0$ curve whose points have
  $j_1 = j_2$. It too does not have any real points.
\item The curve $s = 1$ lifts to an elliptic curve isomorphic to $y^2
  + y = x^3 - x^2 - 10 x - 20$, of conductor $11$ and rank $0$. Its
  points correspond to reducible Jacobians such that the elliptic
  curves are related by an $11$-isogeny.
\item The curve $2rs+12s-r^2+2r = 0$ lifts to an elliptic curve
  isomorphic to $y^2 = x^3 + 22x^2 + 125x$, which has conductor $20$
  and rank $0$. Along it, the elliptic curves are related by a
  $5$-isogeny.
\item The curve $12s^2+6rs+r^2+2r= 0$ lifts to an elliptic curve
  isomorphic to $y^2 = x^3 + 30x^2 + 289x$, which has conductor $17$
  and rank $0$. It corresponds to elliptic curves linked by a
  $17$-isogeny.
\item The curve 
\begin{align*}
& \big( 1296 (r+6)^2 s^3-36 r (r+6) (r^2+162 r-648) s^2 + 12 r^2 (11 r^3+207 r^2+2592 r+8748) s  \\
& \qquad + r^4 (7 r^2+108 r+972) \big) = 0
\end{align*}
is part of the branch locus. It has genus $0$ and can be parametrized
as 
\[
r = \frac{-6(t+2)(t-1)^2}{t(t^2+2t-1)}, \qquad
s = \frac{(1+t+t^2)(t+2)^2(t-1)^2}{(t^2+t-1)(t^2+2t-1)t^2}
\]
It corresponds to an isogeny of degree $1$, i.e.~$j_1 = j_2$.
\item The curve 
\begin{align*}
& 46656(r+6)s^4  -216r^2(r^2+12r+180)s^3 + 36r^3(7r^2+96r-108)s^2 + 72r^5(r+12) \\
& \quad + r^6(5r+54) = 0
\end{align*}
on the Humbert surface has genus $0$, and can be parametrized by 
\[
r = \frac{6(3t^2-3t+1)^2}{t(t^3-5t^2+4t-1)}, \qquad 
s = \frac{(3t-1)(3t^2-3t+1)^3}{(t-1)t^3(t^3-5t^2+4t-1)}
\]
It lifts to an elliptic curve isomorphic to $y^2 + y = x^3 - 7$, which
has conductor $27$ and rank $0$. The points on this curve correspond
to the two elliptic curves being related by a $3$-isogeny.
\item The curve $r = -2s$ on the Humbert surface is non-modular. It
  lifts to an elliptic curve isomorphic to $y^2 = x^3 - 9x + 9$, which
  has conductor $324$ and rank $1$.
\item We can also produce non-modular curves of genus $0$ through the
  sections $P$ and $Q$ and their linear combinations. For instance,
  the section $P$ gives the curve 
\[
s = \frac{r^2(r^2+9r+162)}{54(r-36)(r+6)}, \quad z = \frac{-r^3(r^2-108r-972)(r^3-54r^2-972r-5832)}{27(r-36)^2(r+6)}
\]
while $P - Q$ gives the curve
\[
s = \frac{-r(7r^2+108r+1296)}{6(r^2-108r+216)}, \quad z = \frac{-2(r-36)r^2(r+6)(7r+12)(r^3-54r^2-972r-5832)}{(r^2-108r+216)^2}.
\]
The specializations of the tautological family along these genus $0$
curves are one-parameter families of genus $2$ curves with real
multiplication by $\sO_{36}$.
\end{enumerate}

\section{Discriminant $49$}

\subsection{Parametrization}

We start with the family of elliptic K3 surfaces having $E_8$, $A_4$,
$A_2$ and $A_1$ fibers at $t = \infty, 0, 1, h$ respectively, and a
section of height $49/30 = 4 - 6/5 - 2/3 - 1/2$.

We start with a Weierstrass equation 
\[
y^2 = x^3 + ax^2 + 2t(t-1)(t-h)bx + ct^2(t-h)^2(t-1)^2
\] 
with $a,b,c$ of degrees $2,1,1$ respectively.  We have shifted $x$ so
that the section has vanishing $x$-coordinate at $t = 0, 1, h$. 

This fibration already has an $E_8$ fiber at $\infty$ and $A_1$ fibers
at $t=0,1,h$. Let us normalize 
\begin{align*}
a &= (1-t) + e^2t + a_2 t (1-t) \\
b &= b_0 + b_1 t \\
c &= c_0 + c_1 t,
\end{align*}
where we have used that $a(0)$ and $a(1)$ must be squares in order for
the components to be defined over the base field, and also used the
Weierstrass scaling to fix $a(0) = 1$.

Imposing two more orders of vanishing at $t = 0$ gives linear
equations for $c_0$ and $c_1$, which are easily solved. A third order
of vanishing gives a quadratic equation for $b_1$, whose discriminant
equals $b_0(h+1) - a_2$, up to squares. We set it equal to $f^2$ and
solve for $a_2$ and then $b_1$. Next, we impose an extra order of
vanishing at $t = 1$, which gives a quadratic equation for $e$, with
discriminant up to squares equal to $f^2 - b_0$. We therefore set $b_0
= f^2 - g^2$ and solve for $e$. Next, scale $f = f' g$ and $m = m' g
(f'^2 - 1)$.  The section of height $49/30$ must have $x$-coordinate
$t(t-1)(t-h)(m^2t+g^2-f^2)$. Substituting this in to the Weierstrass
equation, we obtain a square times an expression quartic in
$t$. Completing the square, we obtain two equations. Solving these
leads to a rational moduli space with parameters $r$ and $s$.

The universal Weierstrass equation is as follows, where we have scaled
$t, x, y$ to avoid denominators:

\begin{align*}
y^2 &= x^3 - \Big( (4r^2s^2+4rs^2-s^2+4r^2s+2rs-r^2)  t^2 - 2(2r^3s^4\\
&\qquad \qquad +4r^3s^3 -4r^2s^3+rs^3+6r^3s^2-5r^2s^2+rs^2-s^2+4r^3s \\
&\qquad \qquad -7r^2s+2rs+2s-3r^2+2r) t - (rs^2+rs-s-r)^2 \Big)  x^2 \\
& \qquad + 8t \big( (s+1)t - rs - r - s \big) \big( rs t - (r-1)(rs-1) \big) \cdot \\
&\qquad \quad  \Big(  (4r^3s^4+8r^3s^3-6r^2s^3+rs^3+6r^3s^2-7r^2s^2+3rs^2 \\
& \qquad \qquad       -s^2+2r^3s -3r^2s+2rs-r^2)t  + (rs^2+rs-s-r)^2 \Big)  x \\
& \qquad  + 16t^2 ( (s+1)t - rs - r - s)^2 ( rs t - (r-1)(rs-1) )^2 \cdot \\
& \qquad \qquad \quad ( 4r^2s^2(s+1)(rs+r-1)  t + (rs^2+rs-s-r)^2 ).
\end{align*}

The $A_2$ fiber is now at $t = t_0 = (u-1)(uv-1)/(uv)$.

The $x$-coordinate of a section $P = (x_s, y_s)$ of height $49/30$ is
given by
\[
x_s = 4t \big( (s+1) t - rs - r - s \big) \big( rst - (r-1)(rs-1) \big) \big( rs(s+1) t - 1 \big).
\]
For brevity we omit the $y$-coordinate, though it may be found in the
auxiliary files.

\subsection{Map to $\A_2$ and equation of $\widetilde{\sL}_7$}

We now need to perform ``elliptic hopping'' to an $E_8 E_7$
fibration. We start with a $2$-neighbor step to a fibration with $E_8$
and $A_7$ fibers.

\begin{center}
\begin{tikzpicture}

\draw (-2,0)--(7,0)--(7.69,0.95)--(8.81,0.59)--(8.81,-0.59)--(7.69,-0.95)--(7,0);
\draw (6,1)--(6,-1)--(6.5,-1.866)--(5.5,-1.866)--(6,-1);
\draw (0,0)--(0,1);
\draw (6,0.97)--(7,0.97);
\draw (6,1.03)--(7,1.03);
\draw [very thick] (6,0)--(7,0)--(7.69,0.95)--(8.81,0.59)--(8.81,-0.59);
\draw [very thick] (6.5,-1.866)--(6,-1)--(6,0);
\draw [very thick] (6,0)--(7,0);

\draw (6,0) circle (0.2);
\draw (10.5,0) to [bend right=45] (7,1);
\draw (10.5,0) to [bend right=100] (5,0);
\draw [very thick] (10.5,0) to [bend left=45] (8.81,-0.59);
\draw [very thick] (10.5,0) to [bend left=60] (6.5,-1.866);

\fill [black] (7+0.69,0.95) circle (0.1);
\fill [white] (7+0.69,-0.95) circle (0.1);
\fill [black] (7+1.81,0.59) circle (0.1);
\fill [black] (7+1.81,-0.59) circle (0.1);

\fill [white] (6,1) circle (0.1);
\fill [white] (7,1) circle (0.1);
\fill [black] (6,-1) circle (0.1);
\fill [white] (5.5,-1.866) circle (0.1);
\fill [black] (6.5,-1.866) circle (0.1);

\fill [white] (-2,0) circle (0.1);
\fill [white] (-1,0) circle (0.1);
\fill [white] (0,0) circle (0.1);
\fill [white] (1,0) circle (0.1);
\fill [white] (2,0) circle (0.1);
\fill [white] (3,0) circle (0.1);
\fill [white] (4,0) circle (0.1);
\fill [white] (5,0) circle (0.1);
\fill [black] (6,0) circle (0.1);
\fill [white] (0,1) circle (0.1);
\fill [black] (7,0) circle (0.1);

\fill [black] (10.5,0) circle (0.1);

\draw (-2,0) circle (0.1);
\draw (-1,0) circle (0.1);
\draw (0,0) circle (0.1);
\draw (1,0) circle (0.1);
\draw (2,0) circle (0.1);
\draw (3,0) circle (0.1);
\draw (4,0) circle (0.1);
\draw (5,0) circle (0.1);
\draw (6,0) circle (0.1);
\draw (0,1) circle (0.1);
\draw (7,0) circle (0.1);

\draw (7+0.69,0.95) circle (0.1);
\draw (7+0.69,-0.95) circle (0.1);
\draw (7+1.81,0.59) circle (0.1);
\draw (7+1.81,-0.59) circle (0.1);

\draw (6,1) circle (0.1);
\draw (7,1) circle (0.1);
\draw (6,-1) circle (0.1);
\draw (5.5,-1.866) circle (0.1);
\draw (6.5,-1.866) circle (0.1);

\draw (10.5,0) circle (0.1);

\end{tikzpicture}
\end{center}

The elliptic parameter is given by 
\[
\frac{1}{t(t-t_0)} \left( \frac{y + y_s}{x-x_s} + \alpha_0 + \alpha_1 t \right)
\]
where 
\begin{align*}
\alpha_0 &= rs^2+(r-1)s-r, \\
\alpha_1 &= -\big(2r^2s^2+(2r^2-r+1)s+r^2-r \big)/(r-1).
\end{align*}

Converting to the Jacobian, it is then an easy matter to take a second
$2$-neighbor step to an $E_8 E_7$ fibration, as in the second step for
discriminant $25$. We omit the details, and also the formulas for the
Igusa--Clebsch invariants, which may be found in the auxiliary files.

We then compute the double cover defining the Hilbert modular surface.
\begin{theorem}
A birational model for the surface $\widetilde{\sL}_7$ (equivalently,
for $Y_{-}(49)$) is given by
\[ 
z^2 = -16 s^4 r^4 + 2s(20s^2+17s-1)r^3 -(44s^3+57s^2+18s-1)r^2 + 2s(15s+17)r + s^2.
\]
It is a singular elliptic K3 surface. The Humbert surface is
birational to the $(r,s)$-plane.
\end{theorem}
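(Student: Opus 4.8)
The plan is to run the method of Section~\ref{setup}, already carried out for discriminants $4,9,16,25,36$, in the case $D = 49$. The parametrization above realizes $\sH_{49}$ birationally as the $(r,s)$-plane, together with the universal $L_{49}$-polarized K3 family and the first $2$-neighbor step to an $E_8 A_7$ fibration. First I would complete the elliptic hopping to an $E_8 E_7$ fibration exactly as in the discriminant~$25$ case --- a second $2$-neighbor step, converting each intermediate genus-$1$ fibration to its Jacobian --- put the result in the standard form of \cite{Kum1}, and read off the Igusa--Clebsch invariants $I_2, I_4, I_6, I_{10}$ as rational functions of $r,s$. By Theorem~\ref{humbert} this exhibits the birational morphism $\sF_{L_{49}} \to \sH_{49} \subset \A_2$; the resulting (lengthy) formulas are relegated to the auxiliary files.

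The core of the argument is Step~4: identifying the correct double cover $Y_{-}(49) \to \sH_{49}$. Since $49$ is odd, Hausmann's proposition says the branch locus is the union of the modular curves $F_1$ and $F_{49}$. By the proposition relating $F_N$ to rank-$19$ loci of discriminant $2N$, the component $F_{49}$ corresponds to a sublocus where the Picard number of the K3 jumps to $19$ with discriminant $98$; the component $F_1$, which by the earlier proposition cannot arise as a rank-$19$, discriminant-$2$ locus on an $\II^*,\III^*$ fibration, is instead visible in the denominators of the Weierstrass coefficients and Igusa--Clebsch invariants and in the numerator of $I_{10}$. I would therefore enumerate candidate irreducible factors of the branch locus from these denominators and from $I_{10}$, together with the loci forcing an extra reducible fiber or section (discriminant $98$), assembling a finite list exactly as in the discriminant~$4$ case. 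The correct squarefree product, and the correct arithmetic twist, is then pinned down as in \cite{EK}: reduce modulo several good primes, recover the genus-$2$ curve from its Igusa--Clebsch invariants, count points to obtain the characteristic polynomial of Frobenius on the abelian side, and match it against the Frobenius action on the transcendental lattice of the K3 surface. This selects the quartic displayed in the theorem.

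Finally, to show $\widetilde{\sL}_7$ is a singular (that is, Picard number $20$) elliptic K3 surface, I would analyze the displayed equation directly. Its right-hand side has bidegree $(4,4)$ in $(r,s)$, so viewing the surface as a double cover of $\Proj^1_r \times \Proj^1_s$ branched over this $(4,4)$ curve gives trivial canonical class, hence (after resolving the rational double points over the singularities of the branch curve) a K3 surface; projection to either ruling exhibits it as an elliptic fibration. To pin down the Picard number I would, as in the discriminant~$36$ case, pass to a Weierstrass model of one such fibration, read off the reducible fibers to determine the rank of the trivial lattice, exhibit explicit Mordell--Weil sections bringing the Néron--Severi rank up to $20$, compute the determinant of the height pairing, and verify saturation at the relevant small primes to identify the full $\NS$, confirming maximal Picard number.

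The hardest part will be Step~4. The Picard-jump and denominator heuristics only produce a \emph{list} of candidate factors, so the point-counting and Frobenius-matching verification --- which must be run over enough primes and enough specializations $(r,s)$ to be conclusive --- is both the conceptual crux and the most delicate computation, precisely as it was in \cite{EK} and in the lower-discriminant cases above.
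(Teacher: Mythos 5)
Your proposal follows essentially the same route as the paper: the same elliptic-hopping to an $E_8 E_7$ fibration with Igusa--Clebsch invariants relegated to the auxiliary files, the same identification of the double cover via Hausmann's branch locus $F_1 \cup F_{49}$ (with $F_{49}$ detected by the discriminant-$98$ Picard jump and $F_1$ by degeneration of the $\II^*,\III^*$ model visible in denominators and in $I_{10}$) pinned down by point-counting and Frobenius matching, and the same verification that the surface is a singular elliptic K3 by passing to the Jacobian of the genus-$1$ fibration over $\Proj^1_s$, reading off the reducible fibers (trivial lattice of rank $18$), exhibiting two independent sections, and checking saturation of the resulting rank-$20$ lattice. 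Your framing of K3-ness via the bidegree-$(4,4)$ double cover of $\Proj^1_r \times \Proj^1_s$ is a harmless repackaging of what the paper does directly through the Weierstrass model.
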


This surface has a genus $1$ fibration over $\Proj^1_s$, with a
section, for instance $(z,r) = (s, 0)$. Converting to the Jacobian
fibration, we get the Weierstrass equation
\[
y^2 = x^3 + (148s^3 + 45s^2 - 18s + 1)x^2 + 36s^4(20s^2+12s-1)x + 64s^7(1300s^2+885s-72)
\]
of an elliptic K3 surface. It has an $\I_7$ fiber at $s = 0$, an
$\I^*_0$ fiber at $s = \infty$, a $\IV$ fiber at $s = -1$, an $\I_3$
fiber at $s = 2/25$, and $ \I_2$ fibers at $s = (-5 \pm 2
\sqrt{7})/4$. So the contribution to the Picard number from the
trivial lattice is $18$. In addition we find the sections
\begin{align*}
P &= \big( 4s^3(4s - 3), 4s^3(s+1)(16s^2+40s-3) \big) \\
Q &= \big( -4s^2(5s-2) , 4s^2(s+1)(25s-2) \big)
\end{align*}
which are linearly independent. Therefore, the Picard number is $20$,
i.e.~the Hilbert modular surface is a singular K3 surface. The
intersection matrix of these sections is 
\[
\frac{1}{21} \begin{pmatrix}
13 & 4 \\ 4 & 5
\end{pmatrix}.
\]
It has discriminant $1/9$. Hence the span of these two sections and
the trivial lattice has rank $20$ and discriminant $7 \cdot 4 \cdot 3
\cdot 3 \cdot 2^2 \cdot (1/9) = 2^4 \cdot 7$. We check that it is
$2$-saturated in the N\'eron--Severi lattice, and therefore it must be
all of $\NS(Y_{-}(49))$.

Once again, we check that the symmetric polynomials of the two
$j$-invariants are rational functions of $r$ and $s$ (not displayed
here for brevity), and that the difference $j_1 - j_2$ generates the
function field of the Hilbert modular surface over that of the Humbert
surface. The equation of a tautological family of genus $2$ curves may
also be found in the computer files.

\subsection{Special loci}

\begin{enumerate}
\item The rational curves 
\begin{align*}
& s = -1, z = \pm (r-1)^2 \\
& s = 0, z = \pm r \\
& r = 0, z = \pm s \\
& s = \frac{1-r}{r}, z = \pm \frac{r^3+5r^2-8r+1}{r}
\end{align*}
correspond to $I_{10} = 0$.
\item The curve $rs = 1$ lifts to a rational curve on $Y_{-}(49)$. It
  is part of the product locus.
\item the curve $r = 1$ also lifts to a rational curve. It corresponds
  to the elliptic curves being related by a $13$-isogeny.
\item The curve $rs + r + s = 0$ lifts to an elliptic curve on the
  Hilbert modular surface, isomorphic to $y^2 + xy + y = x^3 - x^2 - x
  - 14$, of conductor $17$ and rank $0$. It corresponds to the
  elliptic curves being related by a $17$-isogeny.
\item The curve $s - r(s^2 + s - 1) = 0$ lifts to an elliptic curve on
  $Y_{-}(19)$, isomorphic to $y^2 + y = x^3 + x^2 - 9x - 15$ of
  conductor $19$ and rank $0$. It corresponds to the elliptic curves
  being related by a $19$-isogeny.
\item The curve $rs^3+2rs^2+s-r+2 = 0$ lifts to an elliptic curve
  isomorphic to $ y^2 = x^3 - x^2 - 4x + 4$, of conductor $24$ and
  rank $0$. It corresponds to the elliptic curves being
  $24$-isogenous.
\item The rational curve $4r^2s^2+4r^2s-2rs-s+r^2-r = 0$ on the
  Humbert surface can be parametrized by $(r,s) = \big( (t+1)/(t^2+1),
  (t^2-1)/4 \big)$. It lifts to an elliptic curve on $Y_{-}(49)$,
  isomorphic to $y^2 + y = x^3 - 7$, which has conductor $27$ and rank
  $0$. It corresponds to the elliptic curves being $27$-isogenous.
\item The rational curve $2rs^2+(2r-1)s+r-1 = 0$ lifts to an elliptic
  curve, isomorphic to $y^2 = x^3 + x^2 + 4x + 4$ of conductor $20$
  and rank $0$. It corresponds to the elliptic curves being
  $20$-isogenous.
\item The curve $ -16 s^4 r^4 + 2s(20s^2+17s-1)r^3
  -(44s^3+57s^2+18s-1)r^2 + 2s(15s+17)r + s^2 = 0$ has genus $0$ and
  is part of the branch locus. It can be parametrized as
\[
r = \frac{(t^2-5)(t^2-2t-7)}{4(t^2-4t-1)}, \qquad 
s = \frac{4(t^2-2t-7)}{(t^2-5)^2} 
\]
and corresponds to the two elliptic curves being isomorphic.
\item Again, we can obtain several genus $0$ non-modular curves from
  the sections. For instance, the section $P$ gives rise to 
\[
r = \frac{2s+3}{s(4s+3)}, \qquad z = -\frac{(s+1)(16s^2+40s-3)}{(4s+3)^2}
\]
and the section $Q$ to 
\[
r = 4/(5s), \qquad z = (s-2)(25s-2)/(25s).
\]
The tautological curve along this latter curve is short enough to note
here; it is given by
\begin{align*}
y^2 &= \big(x^3 + x^2(25s-2) -8x(s-4)(25s-2) -20(s-4)(12s+1)(25s-2) \big) \cdot \\
 &\qquad \big(x^3 -2x^2(25s-2)/5 -x(11s-142)(25s-2)/4 -5(25s-2)(3s^2+368s-148)/4 \big).
\end{align*}
\end{enumerate}

\section{Discriminant $64$}

\subsection{Parametrization}

We start with a family of elliptic K3 surfaces with reducible fibers
of types $E_7$, $A_3$ and $D_5$, and a section of height $2 = 4 - 3/4 - 5/4$.
The Weierstrass equation for this family is given by 
\begin{align*}
y^2 &= x^3 + x^2t\Big( -4(2s^5+6rs^4-s^4+4r^2s^3+4r^2s^2-4r^4)t  \\
    &\qquad  \qquad \qquad    + s^2(s+2r)(s^3+2rs^2+4s^2+8rs+8r^2) \Big) \\
    &\quad  + 2xt^3(t-1) \Big( -2s^3(s+2r)^2(s^2+2s+4r)(s^3+2s^2+12rs+8r^2)t \\
    &\qquad  \qquad \qquad \qquad + 2s^2(s+2r)^2 (3s^6+8rs^5-4s^5+8r^2s^4+12s^4 \\
    &\qquad \qquad \qquad \qquad +24r^2s^3+32rs^3+32r^3s^2 +32r^2s^2+32r^3s+32r^4) \Big) \\
    &\quad  + t^4(t-1)^2\Big( 64r(s-2)s^5(s+r)(s+2r)^4(s^2+2s+4r)t \\
    &\qquad \qquad \qquad \quad  + 16s^4(s+2r)^4(s^3-2s^2-4rs-4r^2)^2 \Big).
\end{align*}

The $x$-coordinate of a section of height $2$ is given by 
\[
x_s = 4(s^2+2s+4r)(t-1)t^2 \big( (s^2+2s+4r)t + (s-2r-2)(s+2r) \big).
\]
For brevity, we omit the description of the process of
parametrization, which is similar to that for the smaller
discriminants described in this paper. A brief outline is given in the
auxiliary files, for the interested reader.

\subsection{Map to $\A_2$ and equation of $\widetilde{\sL}_8$}

To go to an elliptic fibration with $E_8$ and $E_7$ fibers, we proceed
as follows. First, we take a $3$-neighbor step with fiber class as
shown below, to go to an elliptic fibration with $D_8$ and $E_7$
fibers.

\begin{center}
\begin{tikzpicture}

\draw (0,0)--(11,0);
\draw (3,0)--(3,1);
\draw (7,0)--(7,1)--(7.707,1.707)--(7,2.414)--(6.293,1.707)--(7,1);
\draw (9,0)--(9,1);
\draw (10,0)--(10,1);
\draw (6,3)--(6.293,1.707);
\draw [bend right] (6,3) to (6,0);
\draw [bend left] (6,3) to (10,1);
\draw [very thick] (7,1)--(7,0)--(10,0);
\draw [very thick] (9,1)--(9,0);
\draw [very thick] (7,1)--(6.293,1.707)--(7,2.414);
\draw [very thick] (6,3)--(6.293,1.707);

\fill [white] (0,0) circle (0.1);
\fill [white] (1,0) circle (0.1);
\fill [white] (2,0) circle (0.1);
\fill [white] (3,0) circle (0.1);
\fill [white] (3,1) circle (0.1);
\fill [white] (4,0) circle (0.1);
\fill [white] (5,0) circle (0.1);
\fill [white] (6,0) circle (0.1);
\fill [black] (6,3) circle (0.1);
\fill [black] (7,0) circle (0.1);
\fill [black] (7,1) circle (0.1);
\fill [white] (7.707,1.707) circle (0.1);
\fill [black] (6.293,1.707) circle (0.1);
\fill [black] (7,2.414) circle (0.1);
\fill [black] (8,0) circle (0.1);
\fill [black] (9,0) circle (0.1);
\fill [black] (10,0) circle (0.1);
\fill [black] (9,1) circle (0.1);
\fill [white] (10,1) circle (0.1);
\fill [white] (11,0) circle (0.1);

\draw (0,0) circle (0.1);
\draw (1,0) circle (0.1);
\draw (2,0) circle (0.1);
\draw (3,0) circle (0.1);
\draw (3,1) circle (0.1);
\draw (4,0) circle (0.1);
\draw (5,0) circle (0.1);
\draw (6,0) circle (0.1);
\draw (6,3) circle (0.1);
\draw (7,0) circle (0.1);
\draw (7,1) circle (0.1);
\draw (7.707,1.707) circle (0.1);
\draw (6.293,1.707) circle (0.1);
\draw (7,2.414) circle (0.1);
\draw (8,0) circle (0.1);
\draw (9,0) circle (0.1);
\draw (10,0) circle (0.1);
\draw (9,1) circle (0.1);
\draw (10,1) circle (0.1);
\draw (11,0) circle (0.1);

\end{tikzpicture}
\end{center}

We then take a $2$-neighbor step to an $E_8 E_7$ fibration, as in the
calculation for discriminant $16$. We omit the details.

The equation for the double cover defining the Hilbert modular surface
can now be computed. 
\begin{theorem}
A birational model for the surface $\widetilde{\sL}_8$ (equivalently,
for $Y_{-}(64)$) is given by
\[
z^2 = -(s^3+2s^2+12rs+8r^2)(s+2r+2)(27s^4+54rs^3-52s^3-48rs^2+44s^2+96r^2s+72rs-16r^3-16r^2).
\]
Its minimal model is an honestly elliptic surface $X$ with
$\chi(\sO_X) = 3$ and Picard number $30$. The Humbert surface is
birational to the $(r,s)$-plane.
\end{theorem}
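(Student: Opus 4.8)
The plan is to specialize the general machinery of Section~\ref{setup} to $D = 64 = 8^2$. The parametrization subsection already realizes the moduli space $\M_{L_{64}}$ as the rational $(r,s)$-plane, so by Theorem~\ref{humbert} this plane is birational to $\sH_{64}$, which establishes the last assertion of the theorem. Performing the two elliptic hops indicated above---a $3$-neighbor step from the fibration with $E_7$, $A_3$ and $D_5$ fibers to one with $D_8$ and $E_7$ fibers, and then a $2$-neighbor step to an $E_8 E_7$ fibration---and normalizing as in \cite{Kum1} lets me read off the Igusa--Clebsch invariants, hence the morphism $\sF_{L_{64}} \to \A_2$. To promote this birational model of $\sH_{64}$ to $Y_{-}(64)$, I would identify the branch locus exactly as in Section~\ref{setup}: Hausmann's proposition gives the candidate curves $F_w$ with $w \in \{1,4,16,64\}$; the Picard-jump proposition shows that $F_{64}$ and $F_{16}$ contribute components along which the K3 Picard discriminant is $128$ and $32$; the proposition on $\II^*,\III^*$ fibrations shows that $F_1$ and $F_4$ instead surface in the denominators of the Weierstrass and Igusa--Clebsch data; and the correct quadratic twist is then fixed by point-counting and Frobenius matching as in Section~$4$ of \cite{EK}. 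This produces the displayed octic equation $z^2 = f(r,s)$.

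For the geometric classification I would view the model as a double cover $\pi\colon X \to \Proj^2$ branched along the closure $B$ of $\{f = 0\}$, whose components are the cubic $g_1 = s^3+2s^2+12rs+8r^2$, the line $g_2 = s+2r+2$, and the quartic $g_3$. The decisive feature is that $g_1$ and $g_3$ each acquire a node at the origin $(r,s) = (0,0)$, so that $B$ has an ordinary quadruple point there, whereas its other singularities---nodes at the pairwise intersections, and the point at infinity where $g_1$ and $g_3$ are mutually tangent---are simple ($A_n$) and give only canonical double points on $X$. In the canonical resolution the lone quadruple point lowers the smooth-branch value $2\chi(\sO_{\Proj^2}) + \tfrac12(4H)(H) = 4$ by $\tfrac{k(k-1)}{2} = 1$ with $k = 2$, so $\chi(\sO_X) = 3$; since $h^1(\Proj^2,\sO(-4)) = 0$ one also gets $q = 0$ and $p_g = 2$. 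Blowing up the origin by $\sigma$ with exceptional curve $E$, the canonical class of the resolved cover is the pullback of $\sigma^* H - E$; since this class has self-intersection $0$ and $|\sigma^* H - E|$ is the pencil of lines through the origin, $K_X$ has $K_X^2 = 0$ and is effective with a one-dimensional linear system, whence $\kappa(X) = 1$---that is, $X$ is honestly elliptic. For its minimal model then $K^2 = 0$, so $e = 12\chi = 36$, $b_2 = 34$ and $h^{1,1} = b_2 - 2p_g = 30$, giving the bound $\rho(X) \le 30$.

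The genus-$1$ fibration certifying this is exactly that pencil of lines through the quadruple point: a general line meets $B$ with multiplicity $4$ at the origin and in $4$ further points, and because the origin is a branch point of even order, the double cover of the line ramifies only over those $4$ points and so has genus $1$. I would pass to the associated Jacobian fibration (checking for or supplying a section), then determine its reducible fibers---arising from the lines through the origin that hit another singular point of $B$ or are tangent to $g_3$---and the rank and height pairing of its Mordell--Weil group, and apply the Shioda--Tate formula $\rho = 2 + \sum_v (m_v - 1) + \operatorname{rank}\MW$.

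The main obstacle is certifying that $\rho(X)$ attains the maximal value $h^{1,1} = 30$. Unlike the elliptic-K3 cases $n = 6, 7$, where maximal Picard number is immediate once the surface is recognized as a singular K3, here I must exhibit enough reducible fibers and independent sections of the line-pencil fibration to reach $30$ exactly, and check that the resulting sublattice is saturated in $\NS(X)$, just as was done by hand for discriminants $36$ and $49$. It is this accounting of the singular fibers and the Mordell--Weil lattice, rather than any single conceptual step, that carries the real difficulty; a secondary but still delicate point is to confirm that the quadruple point at the origin is the \emph{only} non-simple singularity of $B$, so that $\chi(\sO_X)$ equals $3$ and not something smaller.
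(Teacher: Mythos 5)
Your first half---the parametrization, the $3$- and $2$-neighbor steps to an $E_8E_7$ fibration, reading off the Igusa--Clebsch invariants, and fixing the double cover via Hausmann's curves, the Picard-jump proposition, the $\II^*,\III^*$ denominator argument and the Frobenius-matching twist---is exactly the paper's route. Your geometric classification is genuinely different in flavor: the paper never analyzes the octic branch curve in $\Proj^2$, but instead substitutes $r = st$, absorbs $s^4$ into $z^2$ to get a quartic in $s$, and passes to an explicit Jacobian Weierstrass model over $\Proj^1_t$ (then $t = (v-1)/2$), from which $\chi = 3$ and $\kappa = 1$ are read off directly. It is worth noting that your pencil of lines through the quadruple point at the origin \emph{is} the paper's substitution $r = st$, so the two fibrations coincide; your double-plane computation of $\chi(\sO_X) = 4 - 1 = 3$ and $K_X^2 = 0$ is a nice conceptual justification of what the paper only exhibits computationally. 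One caveat on that side: your claim that all singularities of $B$ away from the origin are of type $A_n$ is not quite right as stated---at the point $[1:0:0]$ at infinity the branch curve has a \emph{triple} point (a node of $g_1$ with one branch along the line at infinity, plus a smooth branch of $g_3$ tangent to that same line), which is a $D$-type, not $A$-type, singularity; it is still canonical, so $\chi = 3$ survives, but this is exactly the ``delicate point'' you flagged and it must actually be checked.

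The genuine gap is the Picard number. Your argument yields only $\rho \le h^{1,1} = 30$ plus a plan (``determine the reducible fibers \dots{} and the Mordell--Weil group''), whereas the equality $\rho = 30$ is the substantive claim of the theorem and cannot be waved through: the paper completes it by listing the fibers of the Jacobian model ($\I_8$ at $v = 0,\infty$, $\I_4$ at $-1$, $\I_2$ at $1$ and at the roots of $v^2-10v-7$ and $7v^2+10v-1$, $\I_3$ at the roots of $v^2-10v+1$, giving trivial lattice of rank $28$), the $2$-torsion section $T = (0,0)$, and two explicit orthogonal sections $P$ and $Q$ of heights $1/6$ and $3/2$, followed by a verification that the resulting lattice of discriminant $2^9\cdot 3^2$ is $2$- and $3$-saturated in $\NS$. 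Two features of this computation would likely derail your outlined search if you are not forewarned: the section $Q$ is defined only over $\Q(\sqrt{-3})$, so a point search over $\Q$ would stall at $\rho \ge 29$ (the relevant Picard number is geometric), and without the saturation check the sublattice you span could a priori be of finite index, leaving the Mordell--Weil lattice (and the stated discriminant data) undetermined. As written, your proposal establishes ``honestly elliptic with $\chi(\sO_X) = 3$'' but not ``Picard number $30$.''
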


The equation above yields an elliptic surface: making the change of
variables $r = st$ and absorbing $s^4$ in $z^2$ makes the right side
quartic in $s$, giving an elliptic fibration over $\Proj^1_t$, with an
obvious section. Converting to the Jacobian form, after some simple
transformations, including the substitution $t = (v-1)/2$, we obtain
the Weierstrass equation
\[
y^2 = x^3+(v^6-14v^5+23v^4+108v^3+23v^2-14v+1)x^2 -16v^4(v^2-10v-7)(7v^2+10v-1)x
\]
which displays an obvious extra symmetry $(x,y,v) \to (x/v^6, y/v^9,
1/v)$.  This is an honestly elliptic surface with $\chi = 3$. It has
reducible fibers of type $\I_8$ at $\infty$ and $0$, $\I_2$ at $1$,
$\I_4$ at $-1$, $\I_2$ at the roots of $v^2 - 10v -7$ and also at the
roots of $7v^2 + 10v - 1$, and $\I_3$ at the roots of $v^2 - 10v + 1$.
The trivial sublattice has rank $28$, leaving room for Mordell--Weil
rank up to $2$. In addition to the $2$-torsion section $T = (0,0)$,
we find the following sections:
\begin{align*}
P &= \big( 4v(7v^2+10v-1), 4v(v+1)(v^2-10v+1)(7v^2+10v-1) \big), \\
Q &= \big( 4(v^2-10v-7),  12\sqrt{-3}(v-1)(v+1)^2(v^2-10v-7) \big).
\end{align*}
These sections are orthogonal of heights $1/6$ and $3/2$
respectively. Therefore, this elliptic surface has maximal Picard
number $30$. The discriminant of the lattice generated by $T, P, Q$
and the trivial lattice is $2^9 \cdot 3^2$. We checked that it is $2$-
and $3$-saturated. Therefore, it must be the full Picard group, and
the Mordell--Weil lattice is generated by these sections.

The Igusa--Clebsch invariants, the equation of a tautological family of
genus $2$ curves, and the elementary symmetric polynomials in the
$j$-invariants of the associated elliptic curves can be found in the
auxiliary files.

\subsection{Special loci}

\begin{enumerate}
\item The curve $r = 0$ is part of the product locus. It lifts to a
  genus $0$ curve on the Hilbert modular surface, with no real points.
\item The curve $s = 0, z = 16r^2(r+1)$ has genus $0$ and is part of
  the locus $I_{10} = 0$, as is the curve $s = 2$, which lifts to a
  rational curve on $\widetilde{\sL}_8$. The locus $I_{10} = 0$ also
  contains the curves $r = -s^2/4$ (which lifts to an elliptic curve),
  $r = -s^2/4 - s/2$ (lifts to a union of two rational curves), and
  $s^3+2s^2+12rs+8r^2 = 0$, which has genus $0$ and is part of the
  branch locus. The last curve can be parametrized as $(r,s) = \big(
  -t(1+3t+t^2), -2(t^2 + 3t+1) \big)$.
\item The curve $s^3+2rs^2-2s^2+4r^2 = 0$ has genus $0$ and can be
  parametrized as $(r,s) = \Big(-t(t^2-2)/\big(2(t+1)\big),
  -(t^2+2)/(t+1) \Big)$. It lifts to an elliptic curve of conductor
  $14$, and is modular (corresponding to the two elliptic curves being
  $7$-isogenous).
\item The rational curve $s = -2r-2$ is part of the branch locus, as
  is the curve
\[
27s^4+54rs^3-52s^3-48rs^2+44s^2+96r^2s+72rs-16r^3-16r^2 = 0.
\]
The latter can be parametrized as 
\[
(r,s) = \left( \frac{ (t^2+26t+124)(t^2+40t+408) }{216(t+20) }, \frac{(t+16)(t^2+26t+124) }{ 54(t+20)} \right)
\]
\item There are several non-modular specializations: for instance, $s
  \in \{-2,1/2, 5/2\}$ or $r = -1$ all yield elliptic curves with
  rational points. Of course, we may use the elliptic fibration to
  produce many elliptic curves, simply by specializing the parameter
  on the base.
\item The sections of the elliptic fibrations gives us several
  (usually) non-modular rational curves. For instance, $P$ gives the
  curve parametrized by
\[
r = -\frac{(v-1)(2v-1)}{v^2-v+1}, \quad 
s = -\frac{2(2v-1)}{v^2-v+1}, \quad
z = \frac{16(v-1)v^2(v+1)(2v-1)^2(v^2-10v+1)}{(v^2-v+1)^4},
\]
while
$Q$ gives the curve parametrized by
\begin{align*}
r &= -\frac{(v-1)(7v^2+4v-7)}{(v+1)(7v^2-4v+7)}, \quad
s = -\frac{2(7v^2+4v-7)}{(v+1)(7v^2-4v+7)}, \quad \\
z &= \frac{48\sqrt{-3}v^3(v^2-10v-7)(7v^2+4v-7)^2(7v^2+10v-1)}{(v+1)^4(7v^2-4v+7)^4}.
\end{align*}

\end{enumerate}

\section{Discriminant $81$}

\subsection{Parametrization}

We start with a family of elliptic K3 surfaces with reducible fibers
of types $E_6$, $A_2$ and $A_7$, and a section of height $9/8 = 4 -
4/3 - 2/3 - 7/8$. A Weierstrass equation is given by
\begin{align*}
y^2 &= x^3 + x^2\Big( (4r^2s^6+4rs^6+s^6-24r^2s^5-24rs^5-6s^5-36r^2s^4+108rs^4+15s^4 +240r^2s^3 +144rs^3\\
 & \qquad \qquad \quad -84s^3+252r^2s^2 -132rs^2-177s^2-216r^2s-504rs-198s+36r^2-108r-63)  t^2 \\
& \qquad \qquad \quad -2(2r^2s^3+rs^3+10r^2s^2+rs^2-2s^2-2r^2s-13rs-4s+6r^2-5r-2)  t + r^2 \Big)  \\
&\quad + 32 \, x \, (s+1)^2(rs-r-1)t^2 \big( 4(s^2+3)^2t -(s-1)(2rs+s+2r-1) \big) \cdot \\
& \qquad \quad \Big( (2rs^6+s^6-4rs^5-2s^5-16r^2s^4+38rs^4+3s^4+64r^2s^3 +24rs^3 -44s^3 \\
& \qquad \qquad+96r^2s^2-34rs^2-73s^2-192r^2s-276rs-114s+48r^2-6r-27)  t^2 \\
& \qquad \qquad -2(r^2s^3+rs^3+9r^2s^2+rs^2-2s^2-5r^2s-13rs-4s+3r^2-5r-2)  t + r^2 \Big) \\
& \quad + 256(s+1)^4(rs-r-1)^2t^4 \big( 4(s^2+3)^2t -(s-1)(2rs+s+2r-1) \big)^2  \cdot \\
& \qquad \quad \Big( (s-1)^2(s^2+2s+8r+5)^2  t^2 -2(rs^3+8r^2s^2+rs^2-2s^2-8r^2s-13rs-4s-5r-2)  t + r^2 \Big),
\end{align*}
and the $x$-coordinate of the extra section is 
\[
x_s =  -4(s+1)^2(rs-r-1)t \big( 4(s^2+3)^2 t -(s-1)(2rs+s+2r-1) \big)/(s-1)^2.
\]

\subsection{Map to $\A_2$ and equation of $\widetilde{\sL}_9$}

We first go to an elliptic fibration with $E_8$ and $A_7$ fibers, via
the $3$-neighbor step corresponding to the figure below.

\begin{center}
\begin{tikzpicture}

\draw (0,0)--(6,0)--(6.5,0.866)--(8.5,0.866)--(9,0)--(8.5,-0.866)--(6.5,-0.866)--(6,0);
\draw (5,0)--(5,1)--(5.5,1.866)--(4.5,1.866)--(5,1);
\draw (2,0)--(2,2);
\draw (5.5,3)--(5.5,1.866);
\draw [bend right] (5.5,3) to (2,2);
\draw [bend left] (5.5,3) to (6.5,0.866);
\draw [very thick] (2,0)--(2,1);
\draw [very thick] (0,0)--(5,0)--(5,1)--(5.5,1.866);

\fill [black] (0,0) circle (0.1);
\fill [black] (1,0) circle (0.1);
\fill [black] (2,0) circle (0.1);
\fill [black] (2,1) circle (0.1);
\fill [white] (2,2) circle (0.1);
\fill [black] (3,0) circle (0.1);
\fill [black] (4,0) circle (0.1);
\fill [black] (5,0) circle (0.1);
\fill [black] (5,1) circle (0.1);
\fill [white] (4.5,1.866) circle (0.1);
\fill [black] (5.5,1.866) circle (0.1);
\fill [white] (5.5,3.0) circle (0.1);
\fill [white] (6,0) circle (0.1);
\fill [white] (6.5,0.866) circle (0.1);
\fill [white] (6.5,-0.866) circle (0.1);
\fill [white] (7.5,0.866) circle (0.1);
\fill [white] (7.5,-0.866) circle (0.1);
\fill [white] (8.5,0.866) circle (0.1);
\fill [white] (8.5,-0.866) circle (0.1);
\fill [white] (9,0) circle (0.1);

\draw (0,0) circle (0.1);
\draw (1,0) circle (0.1);
\draw (2,0) circle (0.1);
\draw (2,1) circle (0.1);
\draw (2,2) circle (0.1);
\draw (3,0) circle (0.1);
\draw (4,0) circle (0.1);
\draw (5,0) circle (0.1);
\draw (5,1) circle (0.1);
\draw (4.5,1.866) circle (0.1);
\draw (5.5,1.866) circle (0.1);
\draw (5.5,3.0) circle (0.1);
\draw (6,0) circle (0.1);
\draw (6.5,0.866) circle (0.1);
\draw (6.5,-0.866) circle (0.1);
\draw (7.5,0.866) circle (0.1);
\draw (7.5,-0.866) circle (0.1);
\draw (8.5,0.866) circle (0.1);
\draw (8.5,-0.866) circle (0.1);
\draw (9,0) circle (0.1);

\end{tikzpicture}
\end{center}

Finally, a $2$-neighbor step (as in the second step for discriminant
$25$) takes us to an $E_8 E_7$ fiber. We can then read out the
Igusa--Clebsch invariants, which are in the auxiliary files, along with
a tautological family of genus $2$ curves and the sum and product of
the $j$-invariants of the elliptic curves.

We then compute the double cover defining the Hilbert modular surface.
\begin{theorem}
A birational model for the surface $\widetilde{\sL}_9$ (equivalently,
for $Y_{-}(81)$) is given by
\begin{align*}
z^2 &= 16(s^3-9s^2-9s+9)^2  r^4  + 32s(s^5+5s^4+90s^3+18s^2-171s-135)  r^3 \\
    &\qquad  + 8(3s^6+40s^5+81s^4-336s^3-627s^2-360s-81)  r^2 \\
    &\qquad  + 8(s+1)^2(s^4+5s^3-49s^2-57s-12)  r  + (s+1)^4(s^2-18s-15).
\end{align*}
Its minimal model is an honestly elliptic surface $X$ with
$\chi(\sO_X) = 3$ and Picard number $29$. The Humbert surface is
birational to the $(r,s)$-plane.
\end{theorem}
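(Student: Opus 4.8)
The plan is to follow the same two-stage strategy used for the previous discriminants: first establish that the displayed equation is a birational model for the double cover $Y_{-}(81) \to \sH_{81}$, and then analyze the resulting surface geometrically to pin down $\chi(\sO_X)$ and the Picard number.

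For the first stage I would apply the machinery of Section \ref{setup}. Starting from the universal Weierstrass family of elliptic K3 surfaces with $E_6$, $A_2$ and $A_7$ fibers and the given section of height $9/8$ over the rational $(r,s)$-space, I would carry out the indicated $3$-neighbor and $2$-neighbor steps to reach the $E_8 E_7$ fibration and read off the Igusa--Clebsch invariants via the main result of \cite{Kum1}; by Theorem \ref{humbert} this exhibits $\sH_{81}$ as birational to the $(r,s)$-plane. The correct double cover is then isolated exactly as before: the candidate branch factors come from the loci where the Picard number jumps and the discriminant of $\NS$ becomes $2D$ or $D/2$ (together with the $F_1$, $F_4$ analysis underlying the proposition on $\II^*,\III^*$ fibrations), and the precise quadratic twist is fixed by matching characteristic polynomials of Frobenius on the K3 and abelian sides at several primes. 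Modulo bookkeeping of the (lengthy) invariants, this part is routine given the earlier sections.

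For the second stage I would exhibit the elliptic fibration visible in the equation: the right-hand side is a quartic in $r$ whose leading coefficient $16(s^3-9s^2-9s+9)^2$ is a perfect square, so $z^2 = (\text{quartic in } r)$ is a genus-$1$ fibration over $\Proj^1_s$ with an evident section at $r = \infty$. Passing to its Jacobian yields a Weierstrass equation $y^2 = x^3 + A(s)x + B(s)$; since the quartic has coefficients of degree $\le 6$ in $s$, the standard invariants have degrees $\le 12$ and $\le 18$, so the relatively minimal model has $\chi(\sO_X) = 3$, confirming it is honestly elliptic (hence $b_2 = 34$ and $h^{1,1} = 30$). I would then run Tate's algorithm to list all reducible fibers and compute the rank of the trivial lattice, search for explicit sections, and compute the height pairing à la Shioda to produce a finite-index sublattice of $\NS(X)$; the Shioda--Tate formula gives the Picard number once the Mordell--Weil rank is known.

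The main obstacle is the upper bound $\rho \le 29$. Unlike every earlier discriminant treated here, where the computed Picard number was maximal ($20$ in the singular-K3 cases, $30$ for discriminant $64$) and hence automatic from $\rho \le h^{1,1}$, here the asserted value $29$ lies one below the maximum $h^{1,1} = 30$, so exhibiting classes only gives $\rho \ge 29$. To force equality I would reduce the Weierstrass model modulo several primes of good reduction, count points over $\F_p, \F_{p^2}, \dots$ to determine the characteristic polynomial of Frobenius on $H^2_{\mathrm{et}}$, and bound the number of eigenvalues of the form $p \cdot (\text{root of unity})$; this caps $\rho_{\overline{\Q}}$ from above, and a prime at which this count equals $29$ (consistent with the odd transcendental rank $5$) completes the argument, with the geometric classification of \cite{He, KS} available as an independent cross-check. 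Finally, having matched the lower and upper bounds, I would verify that the explicit sublattice is saturated at the primes dividing its discriminant, so that it is all of $\NS(X)$ and the Mordell--Weil lattice is as claimed.
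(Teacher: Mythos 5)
Your first stage and most of your second stage track the paper exactly: the paper parametrizes the $E_6\,A_2\,A_7$ family with the height-$9/8$ section, performs the $3$- and $2$-neighbor steps to an $E_8 E_7$ fibration, pins down the twist arithmetically, then views the displayed quartic in $r$ as a genus-$1$ fibration over $\Proj^1_s$ with section, passes to the Jacobian, lists the reducible fibers (trivial lattice of rank $27$), exhibits two explicit sections $P, Q$ with nondegenerate height matrix, and checks $2$- and $3$-saturation --- so your lower bound $\rho \geq 29$ and Mordell--Weil identification are exactly the paper's. (One small caveat: your degree count on the invariants of the quartic only gives $\chi(\sO_X) \leq 3$; equality must be confirmed from the explicit minimal Weierstrass model and its fiber configuration, which the paper computes.)

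The genuine gap is your upper bound. You propose to find a single prime $p$ of good reduction at which the number of Frobenius eigenvalues on $H^2_{\mathrm{et}}$ of the form $p\,\zeta$ (with $\zeta$ a root of unity) equals $29$. That can never happen: the characteristic polynomial of Frobenius has integer coefficients and all of its roots have absolute value $p$ (weight $2$), so the non-real eigenvalues come in conjugate pairs, while the only possible real eigenvalues are $\pm p$, which are themselves $p$ times roots of unity. Since $b_2 = 34$ is even, the eigenvalues \emph{not} of the form $p\,\zeta$ (all non-real, hence paired) contribute an even count, so the number of eigenvalues of the form $p\,\zeta$ is even as well. Consequently every single-prime count is even; the ``odd transcendental rank $5$'' you invoke is impossible modulo $p$, and the best any one prime can yield is $\rho \leq 30$, which you already know from $h^{1,1}$. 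This parity obstruction is precisely why the paper instead uses van Luijk's two-prime refinement \cite{vL}: reduce at two primes ($7$ and $13$ here), at each of which the eigenvalue count is $30$, and compare the square classes of the discriminants of the two hypothetical rank-$30$ N\'eron--Severi lattices (computable via the Artin--Tate formula); a rank-$30$ geometric Picard group over $\overline{\Q}$ would specialize with finite index into both reductions and force the two discriminants into a common square class, so the observed mismatch gives $\rho \leq 29$. The cross-check you mention from \cite{He, KS} determines only the geometric type (honestly elliptic), not the Picard number of this model, so it cannot substitute for this step.
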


The equation above evidently defines an elliptic surface over
$\Proj^1_s$ with section. Converting to the Jacobian form, we obtain
(after Weierstrass transformations and the change of variable $s =
t-1$)
\begin{align*}
y^2 &= x^3 + 4(44t^5+149t^4-184t^3+280t^2-128t+16)x^2 \\
  & \qquad +64t^5(125t^5+1403t^4-946t^3+1672t^2+472t-176)x \\
 & \qquad + 256t^9(13625t^5-3964t^4+18780t^3+3568t^2+17296t-4096)
\end{align*}
It is honestly elliptic with $\chi = 3$, and reducible fibers of type
$\I_8$ at $0$, $\I_4^*$ at $\infty$, $\I_3$ at the roots of
$t^2-2t+4$, $\I_2$ at the roots of $4t^3-12t^2+21t-4$ and $\I_3$ at
the roots of $25t^3-66t^2+84t-16$. These give rise to a trivial
lattice of rank $27$, leaving room for Mordell--Weil rank up to $3$.
We find the sections
\begin{align*}
P &= \big( 128t^3-256t^4, 16t^3(3t-4)(25t^3-66t^2+84t-16) \big) \\
Q &= \big( 16t^4(t^2-7t-2), 16t^4(t^2-2t+4)(4t^3-12t^2+21t-4) \big)
\end{align*}
with height matrix
\[
\frac{1}{18}\begin{pmatrix}
18 & -12 \\ -12 & 17
\end{pmatrix}
\]
Therefore the Picard number $\rho$ is at least $29$. On the other
hand, by counting points modulo $7$ and $13$ and using the method of
van Luijk \cite{vL} (comparing the square classes of the discriminants
of the Picard groups of these reductions) we obtain that the Picard
number cannot be $30$. Therefore $\rho = 29$ exactly. We checked that
the lattice generated by $P$ and $Q$, which has discriminant $2^4
\cdot 3^7$, is $2$- and $3$-saturated. Therefore it is the full
Mordell--Weil lattice.

\subsection{Special loci}

\begin{enumerate}
\item The curves $r = 0$, $s = -4r-1$ and $2rs^2-s^2-4rs-2s+2r-1 =0$
  of genus $0$ on the Humbert surface are all part of the product
  locus. The first lifts to a rational curve on $Y_{-}(81)$ and the
  other two lift to elliptic curves.
\item The curves $s = -1$, $rs -r-1 =0$ and $s^2 + 3 = 0$ are part of
  the locus $I_{10} = 0$. The first two lift to unions of two rational
  curves, while the last clearly has no real points.
\item The curve $s=1$ lifts to an elliptic curve isomorphic to
  $X_0(11)$. It corresponds to the two elliptic curves being
  $11$-isogenous.
\item The branch locus has genus $1$, and corresponds to $j_1 = j_2$.
\item The section $P$ gives rise to the following non-modular curve of
  genus $0$:
\begin{align*}
r &= -\frac{(s+1)^2(s^2-18s-27)}{(s-3)(7s^3+27s^2+45s+9)} \\
z &= \frac{(s-1)(s+1)^3(25s^3+9s^2+27s+27)(s^4-18s^3-144s^2-342s-729)}{(s-3)^2(7s^3+27s^2+45s+9)^2}.
\end{align*}
The section $Q$ gives the non-modular curve
\[
r = \frac{(s+1)^2}{2(s-3)s}, \qquad
z = -\frac{(s-1)(s+1)^3(4s^3+9s+9)}{(s-3)^2s^2}.
\]
The tautological curve over it is given by
\begin{align*}
y^2 &= \big(x^3 + 3x^2(s^2-3s+6)(4s^3+9s+9) \\
& \qquad -24x(s-3)s^2(s^2+3)(4s^2-3s+18)(4s^3+9s+9) \\
& \qquad  -4(s-3)s(4s^2-3s+18)^2(4s^3+9s+9)^2(5s^3-9s^2+9s-9) \big) \cdot\\
& \quad \big( x^3 -6x^2(s-3)(s^2+3)(4s^2-3s+18) \\
& \qquad -3x(s+3)(4s^2-3s+18)^2(5s^2+3)(4s^3+9s+9)/4 \\
& \qquad  + (4s^2-3s+18)^3(4s^3+9s+9) \cdot \\
& \qquad \qquad (100s^6-225s^5+945s^4-1062s^3+2106s^2-729s+81)/4 \big).
\end{align*}

\end{enumerate}

\section{Discriminant $100$}

\subsection{Parametrization}

We start with a family of elliptic K3 surfaces with $D_6$, $A_6$ and
$A_3$ fibers, and a section of height $25/12 = 4 - 6/4 - 3/4 - 6/7$.
A Weierstrass equation is given by 
\begin{align*}
y^2 &=  x^3 + x^2 \Big( -r^5(r+2)^3(2s^2+2s-r^2)  t^3 -r^2(r+2)^2(2s^6+6s^5-5r^2s^4 \\
& \qquad \qquad \qquad -2rs^4+3s^4-10r^2s^3+2r^4s^2-2r^2s^2 +2r^4s-4r^3s-r^4) t^2 \\
& \qquad \qquad \qquad + r(r+2)s(s+2)(s-r)(s+r)(s^4+2s^3-r^2s^2-2s^2+4rs+2r^2) t \\
& \qquad \qquad \qquad + s^2(s+2)^2(s-r)^2(s+r)^2 \Big)/\big(s^2(s+2)^2(s-r)^2(s+r)^2 \big) \\
& \quad + 2x r^3(r+2)^3(s+r+2)(t-1)t^2\Big( r^2(r+2)^2(s^2+s-r^2) t^2 \\
& \qquad \qquad -r(r+2)(s^4+2s^3-r^2s^2-s^2-r^2s+2rs+r^2) t \\
& \qquad \qquad -s(s+2)(s-r)(s+r) \Big)/\big(s(s+2)^3(s-r)^2(s+r)^3\big) \\
& \quad + r^6(r+2)^6(s+r+2)^2(t-1)^2t^4(r^2t+2rt+1)/\big((s+2)^4(s-r)^2(s+r)^4\big),
\end{align*}
and the extra section having $x$-coordinate
\[
x_s = s^3(s+2)(s-r)^2(s+r)(s+r+2)t(t-r^2-2r)/r^2.
\]

\subsection{Map to $\A_2$ and equation of $\widetilde{\sL}_{10}$}

We first go to an elliptic fibration with $E_7$, $A_7$ and $A_1$
fibers by a $2$-neighbor step.

\begin{center}
\begin{tikzpicture}

\draw (0,0)--(6,0)--(6.5,0.866)--(8.5,0.866)--(8.5,-0.866)--(6.5,-0.866)--(6,0);
\draw (1,0)--(1,1);
\draw (3,0)--(3,1);
\draw (5,0)--(5,1)--(5.707,1.707)--(5,2.414)--(4.293,1.707)--(5,1);
\draw (6,3)--(5.707,1.707);
\draw [bend right] (6,3) to (1,1);
\draw [bend left] (6,3) to (6.5,0.866);
\draw [very thick] (0,0)--(5,0)--(5,1);
\draw [very thick] (3,0)--(3,1);

\fill [black] (0,0) circle (0.1);
\fill [black] (1,0) circle (0.1);
\fill [white] (1,1) circle (0.1);
\fill [black] (2,0) circle (0.1);
\fill [black] (3,0) circle (0.1);
\fill [black] (3,1) circle (0.1);
\fill [black] (4,0) circle (0.1);
\fill [black] (5,0) circle (0.1);
\fill [black] (5,1) circle (0.1);
\fill [white] (5.707,1.707) circle (0.1);
\fill [white] (4.293,1.707) circle (0.1);
\fill [white] (5,2.414) circle (0.1);
\fill [white] (6,0) circle (0.1);
\fill [white] (6,3) circle (0.1);
\fill [white] (6.5,0.866) circle (0.1);
\fill [white] (6.5,-0.866) circle (0.1);
\fill [white] (7.5,0.866) circle (0.1);
\fill [white] (7.5,-0.866) circle (0.1);
\fill [white] (8.5,0.866) circle (0.1);
\fill [white] (8.5,-0.866) circle (0.1);

\draw (0,0) circle (0.1);
\draw (1,0) circle (0.1);
\draw (1,1) circle (0.1);
\draw (2,0) circle (0.1);
\draw (3,0) circle (0.1);
\draw (3,1) circle (0.1);
\draw (4,0) circle (0.1);
\draw (5,0) circle (0.1);
\draw (5,1) circle (0.1);
\draw (5.707,1.707) circle (0.1);
\draw (4.293,1.707) circle (0.1);
\draw (5,2.414) circle (0.1);
\draw (6,0) circle (0.1);
\draw (6,3) circle (0.1);
\draw (6.5,0.866) circle (0.1);
\draw (6.5,-0.866) circle (0.1);
\draw (7.5,0.866) circle (0.1);
\draw (7.5,-0.866) circle (0.1);
\draw (8.5,0.866) circle (0.1);
\draw (8.5,-0.866) circle (0.1);

\end{tikzpicture}
\end{center}

Next, we go to a $D_6 D_5 A_3$ fibration, via the fiber class shown below.

\begin{center}
\begin{tikzpicture}

\draw (0,0)--(8,0)--(8.5,0.866)--(10.5,0.866)--(11,0)--(10.5,-0.866)--(8.5,-0.866)--(8,0);
\draw (3,0)--(3,1);
\draw (7,0)--(7,1);
\draw (6.95,1)--(6.95,2);
\draw (7.05,1)--(7.05,2);
\draw [bend left] (7.5,3) to (7,1);
\draw [bend left] (7.5,3) to (8.5,0.866);
\draw [bend right] (7.5,3) to (6,0);
\draw [very thick] (6,0)--(8,0);
\draw [very thick] (7,0)--(7,1);
\draw [very thick] (8.5,0.866)--(8,0)--(8.5,-0.866);

\fill [white] (0,0) circle (0.1);
\fill [white] (1,0) circle (0.1);
\fill [white] (2,0) circle (0.1);
\fill [white] (3,0) circle (0.1);
\fill [white] (3,1) circle (0.1);
\fill [white] (4,0) circle (0.1);
\fill [white] (5,0) circle (0.1);
\fill [black] (6,0) circle (0.1);
\fill [black] (7,0) circle (0.1);
\fill [black] (7,1) circle (0.1);
\fill [white] (7,2) circle (0.1);
\fill [black] (8,0) circle (0.1);
\fill [white] (7.5,3) circle (0.1);
\fill [black] (8.5,0.866) circle (0.1);
\fill [black] (8.5,-0.866) circle (0.1);
\fill [white] (9.5,0.866) circle (0.1);
\fill [white] (9.5,-0.866) circle (0.1);
\fill [white] (10.5,0.866) circle (0.1);
\fill [white] (10.5,-0.866) circle (0.1);
\fill [white] (11,0) circle (0.1);

\draw (0,0) circle (0.1);
\draw (1,0) circle (0.1);
\draw (2,0) circle (0.1);
\draw (3,0) circle (0.1);
\draw (3,1) circle (0.1);
\draw (4,0) circle (0.1);
\draw (5,0) circle (0.1);
\draw (6,0) circle (0.1);
\draw (7,0) circle (0.1);
\draw (7,1) circle (0.1);
\draw (7,2) circle (0.1);
\draw (8,0) circle (0.1);
\draw (7.5,3) circle (0.1);
\draw (8.5,0.866) circle (0.1);
\draw (8.5,-0.866) circle (0.1);
\draw (9.5,0.866) circle (0.1);
\draw (9.5,-0.866) circle (0.1);
\draw (10.5,0.866) circle (0.1);
\draw (10.5,-0.866) circle (0.1);
\draw (11,0) circle (0.1);

\end{tikzpicture}
\end{center}

The new elliptic fibration has Mordell--Weil group of rank $2$,
generated by two sections $P$ and $Q$ with height matrix
\[
\frac{1}{4} 
\begin{pmatrix}
2 & 1 \\ 1 & 13
\end{pmatrix}
\]
with the entries arising as
\begin{align*}
1/2 &= 4 - 3/4 - 5/4 - 6/4, \\
13/4 &= 4 - 3/4, \\
1/4 &= 2 - 1 - 3/4.
\end{align*}

A $2$-neighbor step takes us to a fibration with $D_8 D_6 A_1$ fibers,
a $2$-torsion section $T$ (and a section of height $25/2$).

The section $Q$ intersects the fiber class in $3$, whereas the
remaining component of the $D_5$ fiber intersects it in $2$. Since
these are coprime, the new fibration has a section. Note also that the
section $2P$ of height $2 = 4 - 1 - 2 \cdot 2/4$ doesn't intersect the
fiber class, giving rise to an extra $A_1$ fiber.

\begin{center}
\begin{tikzpicture}

\draw (0,0)--(9,0);
\draw (1,0)--(1,1);
\draw (3,0)--(3,1);
\draw (5,0)--(5,1)--(5.707,1.707)--(5,2.414)--(4.293,1.707)--(5,1);
\draw (7,0)--(7,1);
\draw (8,0)--(8,1);
\draw (5,3.5)--(4.293,1.707);
\draw [bend left] (5,3.5) to (6,0);
\draw [bend right] (5,3.5) to (4,0);
\draw [very thick] (5,1)--(5,0)--(9,0);
\draw [very thick] (8,0)--(8,1);
\draw [very thick] (5.707,1.707)--(5,1)--(4.293,1.707);

\fill [white] (0,0) circle (0.1);
\fill [white] (1,0) circle (0.1);
\fill [white] (1,1) circle (0.1);
\fill [white] (2,0) circle (0.1);
\fill [white] (3,0) circle (0.1);
\fill [white] (3,1) circle (0.1);
\fill [white] (4,0) circle (0.1);
\fill [white] (5,3.5) circle (0.1);
\fill [black] (5,0) circle (0.1);
\fill [black] (5,1) circle (0.1);
\fill [black] (5.707,1.707) circle (0.1);
\fill [black] (4.293,1.707) circle (0.1);
\fill [white] (5,2.414) circle (0.1);
\fill [black] (6,0) circle (0.1);
\fill [black] (7,0) circle (0.1);
\fill [black] (8,0) circle (0.1);
\fill [white] (7,1) circle (0.1);
\fill [black] (8,1) circle (0.1);
\fill [black] (9,0) circle (0.1);

\draw (0,0) circle (0.1);
\draw (1,0) circle (0.1);
\draw (1,1) circle (0.1);
\draw (2,0) circle (0.1);
\draw (3,0) circle (0.1);
\draw (3,1) circle (0.1);
\draw (4,0) circle (0.1);
\draw (5,3.5) circle (0.1);
\draw (5,0) circle (0.1);
\draw (5,1) circle (0.1);
\draw (5.707,1.707) circle (0.1);
\draw (4.293,1.707) circle (0.1);
\draw (5,2.414) circle (0.1);
\draw (6,0) circle (0.1);
\draw (7,0) circle (0.1);
\draw (8,0) circle (0.1);
\draw (7,1) circle (0.1);
\draw (8,1) circle (0.1);
\draw (9,0) circle (0.1);

\draw [above left] (5,3.5) node{$Q$};

\end{tikzpicture}
\end{center}

Finally, a $2$-neighbor step leads us to the desired $E_8 E_7$
fibration. The fiber $F$ with an extended $E_8$ Dynkin diagram is
shown below, and the $2$-torsion section $T$ combines with some of the
$D_6$ components and the non-identity component of the $A_1$ fiber to
give an $E_7$ diagram disjoint from $F$. Also, the section $P$ of
height $25/2$, or its negative, has odd intersection number with $F$,
while the remaining component of the $D_8$ fiber has intersection $2$
with $F$. So the new fibration has a section.  We can read out the
Igusa--Clebsch invariants, which are in the auxiliary files.

\begin{center}
\begin{tikzpicture}

\draw (0,0)--(12,0);
\draw (1,0)--(1,1);
\draw (5,0)--(5,1);
\draw (7,0)--(7,1);
\draw (9,0)--(9,1);
\draw (11,0)--(11,1);
\draw (7,2)--(6,2);
\draw (6.95,1)--(6.95,2);
\draw (7.05,1)--(7.05,2);

\draw [bend right] (6,2) to (1,1);
\draw [bend left] (6,2) to (11,1);
\draw [very thick] (0,0)--(7,0);
\draw [very thick] (5,0)--(5,1);

\fill [black] (0,0) circle (0.1);
\fill [black] (1,0) circle (0.1);
\fill [white] (1,1) circle (0.1);
\fill [black] (2,0) circle (0.1);
\fill [black] (3,0) circle (0.1);
\fill [black] (4,0) circle (0.1);
\fill [black] (5,0) circle (0.1);
\fill [black] (5,1) circle (0.1);
\fill [black] (6,0) circle (0.1);
\fill [black] (7,0) circle (0.1);
\fill [white] (7,1) circle (0.1);
\fill [white] (7,2) circle (0.1);
\fill [white] (6,2) circle (0.1);
\fill [white] (8,0) circle (0.1);
\fill [white] (9,0) circle (0.1);
\fill [white] (9,1) circle (0.1);
\fill [white] (10,0) circle (0.1);
\fill [white] (11,0) circle (0.1);
\fill [white] (11,1) circle (0.1);
\fill [white] (12,0) circle (0.1);

\draw (0,0) circle (0.1);
\draw (1,0) circle (0.1);
\draw (1,1) circle (0.1);
\draw (2,0) circle (0.1);
\draw (3,0) circle (0.1);
\draw (4,0) circle (0.1);
\draw (5,0) circle (0.1);
\draw (5,1) circle (0.1);
\draw (6,0) circle (0.1);
\draw (7,0) circle (0.1);
\draw (7,1) circle (0.1);
\draw (7,2) circle (0.1);
\draw (6,2) circle (0.1);
\draw (8,0) circle (0.1);
\draw (9,0) circle (0.1);
\draw (9,1) circle (0.1);
\draw (10,0) circle (0.1);
\draw (11,0) circle (0.1);
\draw (11,1) circle (0.1);
\draw (12,0) circle (0.1);

\draw [above] (6,2) node{$T$};

\end{tikzpicture}
\end{center}

We compute the double cover defining the Hilbert modular surface.
\begin{theorem}
A birational model for the surface $\widetilde{\sL}_{10}$ (equivalently,
for $Y_{-}(100)$) is given by
\begin{align*}
z^2 &= -(2s^2+2s-r^2)(s^6-2rs^5-r^2s^4+2rs^4+2s^4+4r^3s^3+2r^2s^3  +12rs^3+14s^3 \\
& \qquad \qquad -2r^4s^2-14r^3s^2-29r^2s^2-20rs^2 +10r^4s+10r^3s-2r^2s+r^4).
\end{align*}
Its minimal model is an honestly elliptic surface $X$ with
$\chi(\sO_X) = 3$ and Picard number $28$ or $29$. The Humbert surface
is birational to the $(r,s)$-plane.
\end{theorem}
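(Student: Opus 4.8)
The plan is to follow the template established for the earlier square discriminants, applying the generalized machinery of Section~\ref{setup}. First I would record that the parametrized family of elliptic K3 surfaces with $D_6$, $A_6$ and $A_3$ fibers and a section of height $25/12$ is, by Theorem~\ref{humbert} and the lattice-theoretic analysis of Section~\ref{setup}, a realization of the moduli space $\M_{L_{100}}$ of K3 surfaces lattice-polarized by $L_{100}$, hence a model of the Humbert surface $\sH_{100}$. The chain of $2$- and $3$-neighbor steps depicted above transports this family to an $E_8 E_7$ fibration, from which (by the main result of \cite{Kum1}) one reads the Igusa--Clebsch coordinates of the induced point of $\A_2$; since the family depends on the two free parameters $r,s$ and the generic member has $\NS$ of rank $18$, this exhibits $\sH_{100}$ as birational to the $(r,s)$-plane, giving the last assertion.

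To obtain the explicit double cover I would identify the branch locus of $Y_{-}(100)\to\sH_{100}$. By Hausmann's proposition the one-dimensional fixed locus of $\sigma$ is the union of the modular curves $F_w$ with $w\in\{1,4,25,100\}$ (as $D=100$ is even); the curves $F_{100}$ and $F_{25}$ pull back to loci where the K3 Picard rank jumps to $19$ with discriminant $2D=200$ and $D/2=50$ respectively, while $F_1,F_4$ are detected, as explained in Section~\ref{setup}, through the degeneration of the $E_8 E_7$ Weierstrass data (equivalently, through the denominators and the numerator of $I_{10}$). Assembling the irreducible factors so produced and fixing the correct quadratic twist by matching characteristic polynomials of Frobenius on the abelian and K3 sides (Step~$4$ of \cite{EK}) yields precisely the stated equation $z^2=-(2s^2+2s-r^2)(\cdots)$.

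For the geometric classification I would exhibit an elliptic pencil on the double cover. As in the discriminant-$64$ case, a monomial substitution (e.g.\ $r=st$) together with absorbing a square factor into $z$ makes the defining equation quartic in one variable, producing a genus-$1$ fibration over a $\Proj^1$ with a visible section; passing to its Jacobian gives a Weierstrass model over $\Proj^1$, automatically without multiple fibers. I would then read off the reducible fibers, check that the discriminant has degree $36$ so that $\chi(\sO_X)=3$ and $p_g=2$, $q=0$; the canonical bundle formula gives $K_X=(\chi-2)F=F$, a fiber class, so $\kappa(X)=1$ and $X$ is \emph{honestly elliptic}, with $h^{1,1}=10\chi=30$ and hence $\rho\le 30$. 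Tabulating the reducible fibers determines the rank of the trivial lattice, and producing two explicit sections $P,Q$ with positive-definite height pairing certifies Mordell--Weil rank $\ge 2$, whence $\rho\ge 28$. Finally, reducing modulo two primes of good reduction and comparing the square classes of the discriminants of the reduced Picard lattices, in the style of van Luijk \cite{vL} (as carried out for discriminant $81$), excludes $\rho=30$ and so gives $\rho\le 29$.

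The main obstacle is precisely this residual gap: the explicit divisor classes certify only $\rho\ge 28$, whereas the van Luijk argument excludes only the maximal value $\rho=30$, so the two bounds bracket $\rho$ to the possibilities $28$ and $29$ without deciding between them. Closing the gap would require either locating a further section or reducible fiber component independent of the lattice already found (raising the lower bound to $29$), or a sharper arithmetic argument---for instance incompatible discriminant square classes forcing $\rho\le 28$ at a suitable pair of primes. It is this indeterminacy that the theorem records by stating the Picard number as ``$28$ or $29$''.
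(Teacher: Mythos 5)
Your proposal is correct and matches the paper's proof essentially step for step: the same parametrization and chain of neighbor steps to an $E_8E_7$ fibration giving the Igusa--Clebsch map and the birational identification of $\sH_{100}$ with the $(r,s)$-plane, the same identification of the branch locus via Hausmann's curves $F_w$, $w\in\{1,4,25,100\}$, with the quadratic twist fixed by Frobenius matching, and the same classification of the double cover via the substitution $r=st$, passage to the Jacobian fibration, $\chi(\sO_X)=3$, the bound $\rho\le 30$, explicit sections for the lower bound, and a reduction modulo $7$ and $17$ in van Luijk's style to exclude $\rho=30$. One bookkeeping caveat: in the actual fibration (after the further substitution $t=u+1$) the trivial lattice already has rank $27$ (fibers $\I_{10}$, $\I_5$, $\IV$, four $\I_2$'s and three $\I_3$'s), so the bound $\rho\ge 28$ rests on a \emph{single} section $P$ of height $1/5$; the paper finds only that one, and the two independent sections you posit would force $\rho\ge 29$ and thereby settle exactly the ambiguity that the theorem deliberately records as ``$28$ or $29$''.
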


Making the substitution $r = st$ and absorbing $s^4$ in to $z^2$ makes
the right side quartic in $s$, showing that $\sL_{10}$ is an elliptic
surface with section over $\Proj^1_t$. Converting to the Jacobian
form, after some Weierstrass transformations and substitution $t =
u+1$, we obtain
\begin{align*}
y^2 & = x^3+(u^6+16u^5+74u^4+92u^3+21u^2+20u+4)x^2 \\
   & \qquad +8u^2(10u^5+119u^4+324u^3+291u^2+68u+4)x \\
 &\qquad -16u^4(8u^4+84u^3+139u^2+12u-4).
\end{align*}
It is an honestly elliptic surface with $\chi = 3$, and reducible
fibers of type $\I_{10}$ at $\infty$, $\I_5$ at $0$, $\IV$ at $-2$,
$\I_2$ at the roots of $u^2+3u+1$ and also at the roots of $u^2+8u-4$,
and $\I_3$ at the roots of $3u^3+26u^2+14u+2$. The trivial lattice has
rank $27$, leaving room for Mordell--Weil rank up to $3$.
We find the section
\begin{align*}
P &= \big( -4u^2(3u^2+26u+5), 4u^2(u^2+8u-4)(3u^3+26u^2+14u+2) \big)
\end{align*}
of height $1/5$. Therefore, the Picard number $\rho$ is at least
$28$. On the other hand, counting points modulo $7$ and $17$ shows
that $\rho < 30$. Therefore the Picard number is either $28$ or $29$;
we have not been able to determine it exactly.

The equation of a tautological family of genus $2$ curves and the
elementary symmetric polynomials in the $j$-invariants of the two
elliptic curves are omitted here, but can be found in the computer
files.

\subsection{Special loci}

\begin{enumerate}
\item The rational curves $s^2+2s-r^2 = 0$ (parametrization $r =
  -t(t+2)/\big(2(t+1)\big)$, $s = t^2/\big(2(t+1)\big)$) and
  $s^2+rs+2s-r^2 = 0$ (parametrization $r =
  2(t-3)(3t-4)/\big(5(t^2-t-1)\big)$, $s =
  2(t-3)^2/\big(5(t^2-t-1)\big)$)  are part of the product locus, and
  lift to elliptic curves on $\widetilde{\sL}_{10}$. The rational
  curve $2s^2+2s-r^2 = 0$ (parametrized by $r = -2t/(t^2-2)$, $s =
  2/(t^2-2)$) is also part of the product locus, as well as the branch
  locus.
\item The curves $s = -2$, $s = -r$, $s = 0$, $s = r$ and $s = -r-2$
  are part of the locus $I_{10} = 0$. The first two lift to rational
  curves, while the others lift to unions of two rational curves.
\item The curve $r = 0$ lifts to an elliptic curve isomorphic to
  $X_0(11)$. It corresponds to the two elliptic curves being
  $11$-isogenous.
\item The curve $r = -2$ lifts to an elliptic curve isomorphic to
  $X_0(19)$. It corresponds to the elliptic curves being
  $19$-isogenous. 
\item The curve $s = -1$ also lifts to an elliptic curve (with
  conductor 36 and CM), and is modular: it corresponds to the elliptic
  curves being $9$-isogenous.
\item The curve $r =-1$ lifts to a rational curve; the two elliptic
  curves are again $9$-isogenous along that locus.
\item The branch locus is an elliptic curve, and corresponds to $j_1 =
  j_2$.
\item The section above gives rise to the non-modular rational curve
  parametrized by
\begin{align*}
r &= 2(u+1)(3u+1)/\big(u(3u^2+8u+2)\big), \\
s &= 2(3u+1)/\big(u(3u^2+8u+2)\big), \\
z &= 8(3u+1)^2(u^2+3u+1)(3u^3+26u^2+14u+2)/\big(u^3(3u^2+8u+2)^4\big).
\end{align*}
\end{enumerate}

\section{Discriminant $121$}

\subsection{Parametrization}

Finally, we describe moduli space of elliptic subfields of degree $11$
via the Hilbert modular surface of discriminant $121$. To compute the
surface, we work with the moduli space of elliptic K3 surfaces with
$A_6$, $A_5$ and $A_3$ fibers, and two sections with intersection
matrix
\[
\frac{1}{84} 
\begin{pmatrix}
59 & -46 \\ -46 & 61
\end{pmatrix}
\]
with the entries arising as
\begin{align*}
59/84 &= 4 - 3/4 - 5/6 - 12/7, \\
61/42 &= 4 - 5/6 - 12/7, \\
-23/42 &= 2 - 5/6 - 12/7.
\end{align*}
We will be very brief in our description of all the steps, since the
formulas are all fairly complicated and explained in greater length in
the auxiliary files.

The moduli space of these K3 surfaces is again rational, and with
chosen parameters $r$ and $s$, the universal Weierstrass equation is
as follows.
\begin{align*}
y^2 &= x^3 + a(t) x^2 + b(t) x + c(t), 
\end{align*}
where 
\begin{align*}
a(t) &= (s-r)^2  t^4 -16 \, t^3\, r(rs^4+3s^4-2r^2s^3-5rs^3-4s^3+r^3s^2+4r^2s^2+5rs^2 \\
&\qquad \qquad \qquad \qquad \qquad +2s^2-2r^3s-4r^2s-4rs-s+r^3+2r^2+r)/s   \\
& \qquad + 64r(s-1)^2\, t^2 \,(r^3s^4+6r^2s^4+7rs^4+4s^4-2r^4s^3-8r^3s^3-12r^2s^3-10rs^3 \\
& \qquad \qquad \qquad \qquad \qquad -6s^3+r^5s^2 +4r^4s^2+7r^3s^2+6r^2s^2+5rs^2+2s^2-2r^5s \\
& \qquad \qquad \qquad \qquad \qquad -6r^4s-10r^3s-10r^2s-4rs+r^5+4r^4+6r^3+4r^2+r)/s^2  \\
& \qquad -1024 \, t \, r(s-1)^2(s-r-1)(r^3s^4+5r^2s^4+4rs^4-2s^4-r^4s^3-5r^3s^3-8r^2s^3 \\
& \qquad \qquad \qquad \qquad \qquad \qquad \qquad \quad +rs^3+4s^3 +r^4s^2-8r^2s^2-10rs^2-2s^2 +r^4s\\
& \qquad \qquad \qquad \qquad \qquad \qquad \qquad \quad +7r^3s+12r^2s+6rs-r^4-3r^3-3r^2-r)/s^2  \\
& \qquad  + 4096r^2(s-1)^4(s-r-1)^2(rs+2s-r-1)^2/s^2, \\
b(t) &=  512t^2r(s-1)^2(s-r-1)(s^2-r^2s-2rs-s+r^2+r)\big(t - 8(r^2s+rs-s-r^2+1)\big) \cdot \\
& \qquad \Big( s(s-r)^2  t^3  - 8 \, t^2 \, r(2rs^4+5s^4-3r^2s^3-7rs^3-5s^3+r^3s^2+4r^2s^2 +3rs^2\\
& \qquad \qquad \qquad \qquad \qquad \quad +s^2-2r^3s -3r^2s-3rs-s+r^3+2r^2+r)  \\
& \qquad \quad  + 64r(s-1)^2\, t \,(r^3s^3+5r^2s^3+5rs^3-r^4s^2-4r^3s^2-5r^2s^2+rs^2 +s^2\\
& \qquad \qquad \qquad \qquad \qquad -3r^3s -10r^2s-8rs-s+r^4+4r^3+5r^2+2r)   \\
& \qquad \quad - 512r^2(s-1)^3(s-r-1)(rs+2s-r-1)^2 \Big)/s^2, 
\end{align*}
\begin{align*}
c(t) &= 65536t^4r^2(s-1)^4(s-r-1)^2(s^2-r^2s-2rs-s+r^2+r)^2(t - 8(r^2s+rs-s-r^2+1))^2 \cdot \\
& \qquad  \Big( (s-r)^2  t^2 -16r(rs^3+2s^3-r^2s^2-2rs^2-s^2-2rs-s+r^2+r)t \\
& \qquad \qquad  + 64r^2(s-1)^2(rs+2s-r-1)^2 \Big)/s^2.
\end{align*}
The $x$-coordinates of the two sections are given by 
\begin{align*}
x_1 &=  256 t r(s-1)^2(s-r-1)(8r^2s+8rs-8s-8r^2-t+8)(s^2-r^2s-2rs-s+r^2+r)/s, \\
x_2 &=  -256tr(s-1)^2(s-r-1) \big( s(s^2-r^2s-2rs-s+r^2+r)  t - 8r(rs+2s-r-1)^2(s^2-r-1) \big)/s^2 .
\end{align*}

\subsection{Map to $\A_2$ and equation of $\widetilde{\sL}_{11}$}

We next quickly illustrate the neighbor steps required to go to an
$E_8 E_7$ fibration. 

\begin{center}
\begin{tikzpicture}

\draw (-2.5,-0.866)--(-3.5,-0.866)--(-3.5,0.866)--(-2.5,0.866);

\draw [very thick] (-1,0)--(3,0);
\draw (3,0)--(3.5,0.866)--(4.5,0.866)--(5,0)--(4.5,-0.866)--(3.5,-0.866)--(3,0);

\draw [very thick] (-2.5,0.866)--(-1.5,0.866)--(-1,0)--(-1.5,-0.866)--(-2.5,-0.866);

\draw (1,0)--(1,1)--(1.707,1.707)--(1,2.414)--(1-0.707,1.707)--(1,1);

\draw [bend left] (-1,3.5) to (3.5,0.866);
\draw [bend left] (3,3.5) to (3.5,0.866);
\draw [bend right] (-1,3.5) to (-3.5,0.866);
\draw (3,3.5)--(-3.5,0.866);
\draw (-1,3.5)--(1-0.707,1.707);
\draw [bend left] (3,3.5) to (1,1);

\fill [black] (-1,0) circle (0.1);
\fill [black] (1,0) circle (0.1);
\fill [black] (3,0) circle (0.1);
\fill [white] (1,1) circle (0.1);
\fill [white] (1.707,1.707) circle (0.1);
\fill [white] (1,2.414) circle (0.1);
\fill [white] (1-0.707,1.707) circle (0.1);

\fill [white] (3.5,0.866) circle (0.1);
\fill [white] (3.5,-0.866) circle (0.1);
\fill [white] (4.5,0.866) circle (0.1);
\fill [white] (4.5,-0.866) circle (0.1);
\fill [white] (5,0) circle (0.1);

\fill [black] (-1.5,0.866) circle (0.1);
\fill [black] (-1.5,-0.866) circle (0.1);
\fill [black] (-2.5,0.866) circle (0.1);
\fill [black] (-2.5,-0.866) circle (0.1);
\fill [white] (-3.5,0.866) circle (0.1);
\fill [white] (-3.5,-0.866) circle (0.1);

\draw (-1,0) circle (0.1);
\draw (1,0) circle (0.1);
\draw (3,0) circle (0.1);
\draw (1,1) circle (0.1);
\draw (1.707,1.707) circle (0.1);
\draw (1,2.414) circle (0.1);
\draw (1-0.707,1.707) circle (0.1);

\draw (3.5,0.866) circle (0.1);
\draw (3.5,-0.866) circle (0.1);
\draw (4.5,0.866) circle (0.1);
\draw (4.5,-0.866) circle (0.1);
\draw (5,0) circle (0.1);

\draw (-1.5,0.866) circle (0.1);
\draw (-1.5,-0.866) circle (0.1);
\draw (-2.5,0.866) circle (0.1);
\draw (-2.5,-0.866) circle (0.1);
\draw (-3.5,0.866) circle (0.1);
\draw (-3.5,-0.866) circle (0.1);

\fill [white] (-1,3.5) circle (0.1);
\fill [white] (3,3.5) circle (0.1);

\draw (-1,3.5) circle (0.1);
\draw (3,3.5) circle (0.1);

\end{tikzpicture}
\end{center}

First, we take a $2$-neighbor step to an elliptic fibration with
$E_6$, $A_4$, $A_3$ and $A_1$ fibers, and two sections with
intersection matrix
\[
\frac{1}{60} \begin{pmatrix}
37 & 25 \\ 25 & 115
\end{pmatrix},
\]
with the entries being realized as 
\begin{align*}
37/60 &= 4 - 4/3 - 1/2 - 4/5 - 3/4 \\
23/12 &= 4 - 4/3 - 3/4 \\
5/12 &= 2 - 4/3 - 1/4.
\end{align*}
In the figure below we only display the section of height $23/12$, in
order to not clutter the picture.

\begin{center}
\begin{tikzpicture}

\draw (0,0)--(1,0);
\draw [very thick] (1,0)--(6,0);
\draw [very thick] (2,0)--(2,2);
\draw (6,0)--(5,1)--(5+0.707,1.707)--(5,2.414)--(5-0.707,1.707)--(5,1);
\draw (6,0)--(7,1);
\draw (6,0)--(8,0)--(8+0.69,0.95)--(8+1.81,0.59)--(8+1.81,-0.59)--(8+0.69,-0.95)--(8,0);
\draw (7.03,1)--(7.03,2);
\draw (6.97,1)--(6.97,2);

\draw [very thick, bend right] (7,3.5) to (2,2);
\draw (7,3.5)--(5+0.707,1.707);
\draw [bend left] (7,3.5) to (7,1);
\draw [bend left] (7,3.5) to (8,0);

\fill [white] (0,0) circle (0.1);
\fill [black] (1,0) circle (0.1);
\fill [black] (2,0) circle (0.1);
\fill [black] (3,0) circle (0.1);
\fill [black] (4,0) circle (0.1);
\fill [black] (6,0) circle (0.1);
\fill [white] (8,0) circle (0.1);
\fill [black] (2,1) circle (0.1);
\fill [black] (2,2) circle (0.1);

\fill [white] (5,1) circle (0.1);
\fill [white] (7,1) circle (0.1);
\fill [white] (7,2) circle (0.1);

\fill [black] (7,3.5) circle (0.1);

\fill [white] (5+0.707,1.707) circle (0.1);
\fill [white] (5-0.707,1.707) circle (0.1);
\fill [white] (5,2.414) circle (0.1);

\fill [white] (8+0.69,0.95) circle (0.1);
\fill [white] (8+0.69,-0.95) circle (0.1);
\fill [white] (8+1.81,0.59) circle (0.1);
\fill [white] (8+1.81,-0.59) circle (0.1);

\draw (0,0) circle (0.1);
\draw (1,0) circle (0.1);
\draw (2,0) circle (0.1);
\draw (3,0) circle (0.1);
\draw (4,0) circle (0.1);
\draw (6,0) circle (0.1);
\draw (8,0) circle (0.1);
\draw (2,1) circle (0.1);
\draw (2,2) circle (0.1);

\draw (5,1) circle (0.1);
\draw (7,1) circle (0.1);
\draw (7,2) circle (0.1);

\draw (7,3.5) circle (0.1);

\draw (5+0.707,1.707) circle (0.1);
\draw (5-0.707,1.707) circle (0.1);
\draw (5,2.414) circle (0.1);

\draw (8+0.69,0.95) circle (0.1);
\draw (8+0.69,-0.95) circle (0.1);
\draw (8+1.81,0.59) circle (0.1);
\draw (8+1.81,-0.59) circle (0.1);

\end{tikzpicture}
\end{center}

Next, we go by a $2$-neighbor step to an $E_7 A_4 A_2 A_1$ fibration
with two sections having intersection matrix
\[
\frac{1}{30}\begin{pmatrix}
16 & -5 \\ -5 & 115
\end{pmatrix},
\]
with the entries being realized as
\begin{align*}
8/15 &= 4 - 3/2 - 4/5 - 2/3 - 1/2 \\
23/6 &= 6 - 3/2 - 2/3 \\
-1/6 &= 2 + 1 - 1 - 3/2 - 2/3.
\end{align*}
We only show the section of height $8/15$ below.

\begin{center}
\begin{tikzpicture}

\draw (-1,0)--(0,0);
\draw [very thick] (0,0)--(6,0);
\draw [very thick] (2,0)--(2,1);
\draw (6,0)--(5,1)--(5+0.5,1.866)--(5-0.5,1.866)--(5,1);
\draw [very thick] (6,0)--(6.5,1);
\draw (6,0)--(7,0)--(7+0.69,0.95)--(7+1.81,0.59)--(7+1.81,-0.59)--(7+0.69,-0.95)--(7,0);
\draw (6.53,1)--(6.53,2);
\draw (6.47,1)--(6.47,2);

\draw [bend right] (6.5,3.5) to (-1,0);
\draw (6.5,3.5)--(5+0.5,1.866);
\draw (6.5,3.5)--(6.5,2);
\draw [bend left] (6.5,3.5) to (7+0.69,0.95);

\fill [white] (-1,0) circle (0.1);
\fill [black] (0,0) circle (0.1);
\fill [black] (1,0) circle (0.1);
\fill [black] (2,0) circle (0.1);
\fill [black] (3,0) circle (0.1);
\fill [black] (4,0) circle (0.1);
\fill [black] (5,0) circle (0.1);
\fill [black] (6,0) circle (0.1);
\fill [white] (7,0) circle (0.1);
\fill [black] (2,1) circle (0.1);

\fill [white] (5,1) circle (0.1);
\fill [black] (6.5,1) circle (0.1);
\fill [white] (6.5,2) circle (0.1);

\fill [white] (6.5,3.5) circle (0.1);

\fill [white] (5+0.5,1.866) circle (0.1);
\fill [white] (5-0.5,1.866) circle (0.1);

\fill [white] (7+0.69,0.95) circle (0.1);
\fill [white] (7+0.69,-0.95) circle (0.1);
\fill [white] (7+1.81,0.59) circle (0.1);
\fill [white] (7+1.81,-0.59) circle (0.1);

\draw (-1,0) circle (0.1);
\draw (0,0) circle (0.1);
\draw (1,0) circle (0.1);
\draw (2,0) circle (0.1);
\draw (3,0) circle (0.1);
\draw (4,0) circle (0.1);
\draw (5,0) circle (0.1);
\draw (6,0) circle (0.1);
\draw (7,0) circle (0.1);
\draw (2,1) circle (0.1);

\draw (5,1) circle (0.1);
\draw (6.5,1) circle (0.1);
\draw (6.5,2) circle (0.1);

\draw (6.5,3.5) circle (0.1);

\draw (5+0.5,1.866) circle (0.1);
\draw (5-0.5,1.866) circle (0.1);

\draw (7+0.69,0.95) circle (0.1);
\draw (7+0.69,-0.95) circle (0.1);
\draw (7+1.81,0.59) circle (0.1);
\draw (7+1.81,-0.59) circle (0.1);

\end{tikzpicture}
\end{center}

We next take a $2$-neighbor step to an $E_8 A_7$ fibration, with a
section of height $121/8 = 4 + 2 \cdot 6 - 7/8$. Finally, we take a
$2$-neighbor step to go to an $E_8 E_7$ fibration, as in the second
step for discriminant $25$. The formulas for the Igusa--Clebsch
invariants can now be read off; they may be found in the auxiliary
files.

Finally, we compute the double cover defining the Hilbert modular surface.
\begin{theorem}
A birational model for the surface $\widetilde{\sL}_{11}$ (equivalently,
for $Y_{-}(121)$) is given by
\begin{align*}
z^2 &= (s-1)^4  r^6 -2(s-1)^3(s^2+4s+2)r^5 +(s-1)^2(s^4-4s^3-40s^2-10s+6) r^4 \\
& \qquad + 2(s-1)(6s^5-3s^4-25s^3-22s^2+22s-2) r^3 + (22s^6-8s^5-10s^4-108s^3+113s^2-26s+1) r^2 \\
& \qquad -2(s-1)s^2(s+1)(2s-13)(2s-1) r -27(s-1)^2s^4.
\end{align*}
It is a surface of general type. The Humbert surface is birational to
the $(r,s)$-plane.
\end{theorem}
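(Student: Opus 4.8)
The plan is to follow the same two‑phase strategy that produced the models for the smaller square discriminants: first derive the defining equation of the double cover $Y_{-}(121)\to\sH_{121}$, then determine the Kodaira dimension of the resulting surface.

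\emph{Phase 1: the equation.} Having transported the universal K3 family through the four $2$‑neighbor steps indicated above (from the $A_6A_5A_3$ fibration, through $E_6A_4A_3A_1$, $E_7A_4A_2A_1$ and $E_8A_7$, to the final $E_8E_7$ fibration), I would read off the Igusa--Clebsch invariants $I_2,I_4,I_6,I_{10}$ as rational functions of $(r,s)$ using the isomorphism $\phi$ of Theorem~\ref{humbert} together with \cite{Kum1}; this realizes the $(r,s)$‑plane as a birational model of $\sH_{121}$. To locate the branch divisor I would assemble candidate factors exactly as in the discriminant‑$4$ case. Since $121$ is odd, Hausmann's proposition gives that the one‑dimensional fixed locus of $\sigma$ is $F_1\cup F_{121}$. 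The component $F_{121}$ pulls back to the sublocus where $\NS$ of the K3 jumps to rank $19$ and discriminant $2\cdot 121=242$, detectable by the rank‑jump criterion; by the proposition ruling out discriminants $2$ and $8$, the component $F_1$ instead manifests only through degenerations of the $E_8E_7$ model, i.e.\ through the denominators of the Weierstrass coefficients and of the Igusa--Clebsch invariants, together with the numerator of $I_{10}$. Collecting the irreducible factors arising in these two ways yields a finite list of candidate branch components, and the correct product among them is pinned down by the arithmetic‑twist argument of \cite[Section 4]{EK}, matching characteristic polynomials of Frobenius on the abelian and K3 sides over several small primes. The output is the asserted polynomial $F(r,s)$ with $z^2=F(r,s)$.

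\emph{Phase 2: general type.} Regarding $F$ as a form of bidegree $(6,6)$ in $(r,s)$, I would view the surface as the double cover $\pi\colon X\to\Proj^1_r\times\Proj^1_s$ branched over $B=\{F=0\}\in|\sO(6,6)|$. Writing $B\sim 2L$ with $L=\sO(3,3)$, the double‑cover formula gives $K_X=\pi^*\!\big(K_{\Proj^1\times\Proj^1}+L\big)=\pi^*\sO(1,1)$. Because $\sO(1,1)$ is ample on $\Proj^1\times\Proj^1$ and $\pi$ is finite, $\pi^*\sO(1,1)$ is ample; hence $K_X$ is big, so $X$ has Kodaira dimension $2$ and is of general type, provided the singularities of $B$ force only canonical (rational double point) singularities on $X$. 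I would confirm this by a local analysis of $B$ at its singular points, checking that the branch multiplicities stay below the threshold at which the cover acquires worse‑than‑ADE singularities, so that the minimal resolution has $K$ nef and big. As an independent confirmation, this agrees with the geometric classification of Hermann \cite{He} and Kani--Schanz \cite{KS}, who place the $n=11$ surface in the general‑type range.

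\emph{The main obstacle.} The genuine difficulty is twofold. Computationally, pushing the universal Weierstrass data faithfully through the long neighbor chain while retaining the $(r,s)$ parametrization produces very large intermediate expressions, so the Igusa--Clebsch formulas (and hence $F$) must be certified by specialization and numerical cross‑checks rather than trusted blindly. Conceptually, the hard part is the branch‑locus/twist step: one must correctly separate which candidate factors actually occur---in particular accounting for the $F_1$ contribution through the subtler pattern of denominators and of the $I_{10}$ numerator, rather than the clean rank‑jump criterion available for $F_{121}$---and then fix the global quadratic twist by Frobenius matching. By comparison, once $F$ is in hand the general‑type conclusion via $K_X=\pi^*\sO(1,1)$ is comparatively routine.
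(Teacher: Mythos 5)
Your Phase 1 is essentially the paper's own route: the same chain of four $2$-neighbor steps (through the $E_6A_4A_3A_1$, $E_7A_4A_2A_1$, and $E_8A_7$ fibrations to $E_8E_7$), the same use of Hausmann's result that for odd $D$ the fixed locus is $F_1\cup F_D$, with $F_{121}$ detected by the rank-$19$, discriminant-$242$ jump and $F_1$ detected only through denominators and the numerator of $I_{10}$ (via the proposition excluding discriminants $2$ and $8$), and the same Frobenius-matching twist from \cite[Section 4]{EK}. That part is fine.

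Phase 2 contains a genuine gap. You treat the proviso about branch singularities as a routine check, but in this family it is the entire content of the question, and the naive double-cover formula demonstrably gives the wrong answer for neighboring discriminants computed in the same paper. For discriminant $64$ the branch curve $(s^3+2s^2+12rs+8r^2)(s+2r+2)(27s^4+\cdots-16r^3-16r^2)=0$ has bidegree $(6,8)$, and for discriminant $100$ the branch curve likewise has bidegree $(6,8)$; your argument, applied verbatim, would give $K_X=\pi^*\sO(1,2)$ and conclude general type for both, yet both surfaces are honestly elliptic with $\chi(\sO_X)=3$. The branch curves arising from these modular constructions systematically carry worse-than-ADE singularities (along product loci, cusp degenerations, etc.), and the canonical-resolution corrections drop the Kodaira dimension; there is no a priori reason the discriminant-$121$ sextic is better behaved, so ``general type'' cannot be extracted from $K_X=\pi^*\sO(1,1)$ without actually carrying out the resolution at every singular point of $B$ (or computing birational invariants such as $\chi$ and a plurigenus of the minimal model). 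The paper itself does not argue this way: it leans on the classification of Hermann and Kani--Schanz for the geometric type, which you invoke only as a cross-check. Either promote that citation to the actual proof of the Kodaira-dimension claim (verifying your model matches the surface classified there), or replace the hand-wave with the explicit singularity/resolution computation; as written, Phase 2 does not establish that $\widetilde{\sL}_{11}$ is of general type.
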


The expressions for the sum and product of the $j$-invariants of the
two associated elliptic curves may be found in the auxiliary computer
files, which also have the equation of a tautological family of genus
$2$ curves.

\subsection{Special loci}
Once again we describe some special curves of low genus on the modular
surface.
\begin{enumerate}
\item The rational curve $s = 0$ and the genus $1$ curve
  $s^2-r^2s-2rs-s+r^2+r = 0$ are contained in the product locus. Each
  lifts to a union of two curves on the Hilbert modular surface (of
  genera $0$ and $1$ respectively).
\item The curves
\begin{align*}
& r = 0 \\
& s = 1 \\
& s = r + 1 \\
& rs+s-r = 0 \\
& r^2s+rs-s-r^2+1  = 0 \\
& s^2-r^2s-rs-s+r^2 = 0 \\
& rs^2+2s^2-r^2s-3rs-2s+r^2+r= 0 
\end{align*}
all correspond to $I_{10} = 0$. The first five are of genus $0$ and
the remaining two have genus $1$. The first two lift to unions of two
genus $0$ curves with no rational points, the third to a union of two
$\Proj^1$'s, the fourth to an irreducible $\Proj^1$, the fifth to an
elliptic curve, and the last two to curves of genus $2$ and $3$
respectively.
\item The rational curve $rs + 2s - r -1 = 0$ lifts to an elliptic
  curve isomorphic to $X_0(21)$, and corresponds to the elliptic
  curves being $21$-isogenous.
\item the rational curve $r = -1$ lifts to an elliptic curve
  isomorphic to $X_0(17)$, and corresponds to the elliptic curves
  being $17$-isogenous.
\item The rational curve $s^2 + rs -r = 0$ lifts to an elliptic curve
  isomorphic to $X_0(32)$, and corresponds to the the elliptic curves
  being $32$-isogenous.
\end{enumerate}

\end{document}